%% file: main.tex
\documentclass{article}
\input{preamble}

\begin{document}

\maketitle

\begin{abstract}
We construct global versions of the analytic Hopf algebras used in the $p$-adic Fourier theory of Amice and Mahler over a general Banach ring, independently of the choice of prime $p$. This is done by generalising K\"othe echelon and coechelon spaces to an arbitrary base Banach ring $R$ and proving reflexivity and nuclearity results. We show how to define an analytic Hopf algebra structure on them and investigate their duality theory. The particular case of the Hopf algebra of analytic functions converging on the open unit disk around $1$ and its dual is studied in detail. Amice duality is recovered from this case by base-change to a $p$-adic ring. Most notably, when $R$ is the ring of integers with the trivial norm, we obtain a global analytic version of Amice duality that does not depend on $p$.
\end{abstract}

\tableofcontents
\pagebreak

\section*{Introduction}
\addcontentsline{toc}{section}{Introduction}
\input{sections/intro}

\subsection*{Acknowledgements}
Part of this work is based on unpublished notes by Federico Bambozzi, Kobi Kremnizer, and Adam Topaz. I thank them for sharing their insights. I am especially grateful to Federico Bambozzi for his constant support throughout the writing of this article.
\pagebreak

\subsection*{Notation and conventions}
\input{sections/conv}

\section{Recalls on bornological modules} \label{sec:born}
\input{sections/born}

\section{A family of reflexive bornological modules} \label{sec:reflex}
\input{sections/reflex}

\section{Duality of analytic Hopf algebras} \label{sec:duality}
\input{sections/duality}

\subsection{Application: uniform \textit{p}-adic Fourier duality}
\input{sections/concl}

\printbibliography[heading=bibintoc]

\end{document}

%% file: preamble.tex
\usepackage[utf8]{inputenc}

\usepackage{amsmath, amsfonts, amsthm, graphicx, geometry, enumerate}
\numberwithin{equation}{section}

\usepackage{comment}

\usepackage{commath}

\usepackage{hyperref}
\hypersetup{
    colorlinks=true,
    linkcolor=black,
    citecolor=black,
    urlcolor=blue,
}
\usepackage{cleveref}

\usepackage{xcolor}

\usepackage{tikz}
\usetikzlibrary{cd}

\usepackage{dsfont}
\usepackage{mathrsfs}

\usepackage[
    backend=biber,
    style=alphabetic
 ]{biblatex}
\newtheorem{theorem}{Theorem}[section]
\newtheorem{prop}[theorem]{Proposition}
\newtheorem{lemma}[theorem]{Lemma}
\newtheorem{corollary}[theorem]{Corollary}

\newtheorem*{theorem*}{Theorem}

\theoremstyle{remark}
\newtheorem{remark}[theorem]{Remark}

\theoremstyle{definition}
\newtheorem{definition}[theorem]{Definition}

\title{Duality of analytic Hopf algebras and the Amice transform}
\author{Luca Collauto}
\date{}

\newcommand{\N}{\mathbb{N}}
\newcommand{\Np}{\mathbb{N}_{>0}}
\newcommand{\Z}{\mathbb{Z}}
\newcommand{\Q}{\mathbb{Q}}
\newcommand{\R}{\mathbb{R}}
\newcommand{\Rnn}{\mathbb{R}_{\geq0}}
\newcommand{\Rp}{\mathbb{R}_{>0}}
\newcommand{\C}{\mathbb{C}}

\newcommand{\A}{\mathbb{A}}

\newcommand{\na}{\mathrm{na}}  
\newcommand{\la}{\mathrm{la}}  
\newcommand{\op}{\mathrm{op}}  

\newcommand{\sA}{\mathscr{A}}
\newcommand{\sC}{\mathscr{C}}
\newcommand{\sD}{\mathscr{D}}

\newcommand{\sS}{\mathscr{S}}

\newcommand{\bC}{\mathbf{C}}

\newcommand{\Set}{\mathbf{Set}}

\newcommand{\Alg}{\mathbf{Alg}}
\newcommand{\Ind}{\mathbf{Ind}}
\newcommand{\Ban}{\mathbf{Ban}}
\newcommand{\Banc}{\mathbf{Ban}^{\leq1}}
\newcommand{\Bannac}{\mathbf{Ban}^{\na,\leq1}}
\newcommand{\Born}{\mathbf{Born}}
\newcommand{\Comm}{\mathbf{Comm}}
\newcommand{\Aff}{\mathbf{Aff}}

\newcommand{\fdiff}[2][1]{{#2}^{[#1]}}
\newcommand{\id}{\mathrm{id}}

\DeclareMathOperator{\Int}{Int}

\DeclareMathOperator{\Hom}{Hom}

\DeclareMathOperator{\End}{End}

\DeclareMathOperator*{\colim}{colim}
\DeclareMathOperator*\indlim{``\underrightarrow{\lim}''}
\DeclareMathOperator{\Ker}{Ker}
\DeclareMathOperator{\Coker}{Coker}
\DeclareMathOperator{\Coim}{Coim}
\DeclareMathOperator{\im}{Im}

\newcommand{\wot}{\widehat{\otimes}}
\newcommand{\ihom}{\underline{\operatorname{Hom}}}

\newcommand{\prodc}{\sideset{}{^{\leq1}}\prod}
\newcommand{\coprodc}{\sideset{}{^{\leq1}}\coprod}

\newcommand{\coprodnac}{\sideset{}{^{\na,\leq1}}\coprod}

\newcommand{\bs}[3]{\Int_{#1}\langle #2 \rangle^{\leq #3}}
\newcommand{\dbs}[3]{\Int_{#1}\langle #2 \rangle^{\dagger,#3}}

\newcommand{\ps}[3]{#1\langle #2 \rangle^{\leq #3}}

\newcommand{\sps}[3]{#1\langle #2 \rangle^{< #3}}


%% file: sections/intro.tex

In the development of arithmetic geometry, functional analysis has assumed a progressively more significant role, especially in the study of $L$-functions. Starting from Tate's thesis, the perspective on $L$-functions has adopted analytic tools like Fourier theory, spaces of functions, measures, and distributions. 

To get a picture, take Tate's construction of the Riemann zeta function and Dirichlet $L$-functions. Consider the ring of adèles $\A$ over the rational numbers, the space of rapidly decreasing complex valued functions $\sS(\A,\C)$ over it, and its dual, the space of tempered distributions $\sS'(\A,\C)$. The group of continuous homomorphisms from the quotient group $\A^\times /\Q^\times$ to $\C^\times$ is a complex variety $X(\C)$ parametrised by a complex number $s\in\C$ and Dirichlet character $\chi$. Tate's construction of the $\zeta$-integral provides an analytic map
\[
X(\C) \to \sS'(\A,\C),\quad (s,\chi)\mapsto \zeta(s,\chi).
\]
Then, for some particular choice of $f\in\sS(\A,\C)$, the duality pairing between $\sS'(\A,\C)$ and $\sS(\A,\C)$ induces a $\C$-analytic function on $X(\C)$
\[
\Lambda: X(\C) \to \C
\]
which coincides with the Dirichlet $L$-function $L(s,\chi)$ up to a $\Gamma$-factor $\Gamma_{\!\infty}(s,\chi)$. This construction allows one to obtain the analytic continuation and the functional equation of $L(s,\chi)$ by means of Fourier duality theory on $\sS(\A,\C)$ and $\sS'(\A,\C)$.

An idea that goes back to Iwasawa is to consider an analogous setting but with coefficients in $\C_p$, the completed algebraic closure of $\Q_p$, instead of $\C$. Similarly to above, one obtains a $\C_p$-analytic function $L_p(s,\chi)$ defined on a rigid-analytic variety over $\C_p$. The so called $p$-adic $L$-function $L_p(s,\chi)$ interpolates the critical values of the classical Dirichlet $L$-function associated with the Dirichlet character $\chi$.

The functional spaces in this setting are the space $\sC^\la(\Z_p,\C_p)$ of locally analytic functions on $\Z_p$ with values in $\C_p$ and its dual $\sD^\la(\Z_p,\C_p)$. 
The duality between the two spaces was analysed by Amice \cite{duals,inter}, who identified $\sD^\la(\Z_p,\C_p)$ with the space $\sps{\C_p}{T}{1}$ of analytic functions on the open unit disk in $\C_p$. The identification is constructed through the Amice transform: 
\[
\sD^\la(\Z_p,\C_p) \longrightarrow \sps{\C_p}{T}{1}, 
\quad
\mu \longmapsto \sA_\mu(T):=\int_{\Z_p} (1+T)^x\,\mu(x).
\]

The Amice transform also identifies the Hopf algebra structure induced on $\sD^\la(\Z_p,\C_p)$ by the addition law of the $p$-adic integers, with the Hopf algebra structure induced by the multiplication law defined on the unit disk with centre in $1$. The same remains true even if one changes coefficients and takes general complete subrings of $\C_p$. From a geometric point of view, the Amice duality underlies a duality between commutative analytic groups defined over $\Z_p$.

It is common in number theory to blend geometric and analytic tools and to apply them in parallel in both the $p$-adic world and the archimedean setting. Just like algebraic geometry incorporates commutative algebra as its local theory, analytic geometry seeks to develop a geometry whose local theory is based on functional spaces.

In recent years, the foundations of analytic geometry have advanced significantly. An example is the framework developed using condensed mathematics \cite{condmath}. Another approach is presented in \cite{persp} and \cite{dag,fremod,sheafy}. In these works, the local theory is formulated using a bornological approach to functional spaces. This means that the focus is on norms and bounded maps between spaces instead of open subsets and continuous linear maps. The bornological theory has already proven its relevance in the classical context over $\R$ and $\C$, and also over a non-archimedean field. The authors of \cite{dag,fremod,sheafy} managed to extend the theory in order to make it work on a general complete normed ring, also called a Banach ring. One of the most interesting features of this is the possibility of globalising analytic geometry. For example, when the ring is $\Z$ with the trivial or archimedean norm, one can perform the common constructions over it and then recover the same construction over $\Q_p$ or $\R$ with the correct analytic structure through base-change.

The goal of this article is to investigate the duality theory of analytic Hopf algebras in this bornological context over a general Banach ring, with a main focus on globalising Amice duality. 

In Section \ref{sec:born}, we recall the background on Banach, ind-Banach, and (complete) bornological modules over a Banach ring $(R,\abs{-})$. In particular, we introduce the dual of a module $M$ as in classical functional analysis: it is the module consisting of bounded linear maps $M \to R$. We also review the nuclearity property. It is a property relevant in functional analysis and analytic geometry, just as finiteness conditions on modules are for algebraic geometry (see, for example, Section 6 of \cite{kelly2025} and Lecture VIII of \cite{claused2026}).

In Section \ref{sec:reflex} we define a family of bornological modules that generalise to a Banach ring $(R,\abs{-})$ the spaces of sequences introduced by K\"othe \cite{koethe} for $\R$ or $\C$. Given a family $\rho=\{\rho_j:j\in\N\}$ of functions $\rho_j:\N \to \Rp$, we consider the module $\lambda(\rho)$ consisting of sequences $(x_0,x_1,x_2,\dots)$ of elements of the Banach ring $R$ such that the sum
\[
\abs{x_0}\rho_j(0) \;+\; \abs{x_1}\rho_j(1) \;+\; \abs{x_2}\rho_j(2) \;+\; \cdots
\]
converges for every $j$. The dual notion is the module $\kappa(\rho)$ consisting of sequences $(x_0,x_1,x_2,\dots)$ such that 
\[
\frac{\abs{x_0}}{\rho_j(0)} \,+\, \frac{\abs{x_1}}{\rho_j(1)} \,+\, \frac{\abs{x_2}}{\rho_j(2)} \,+\, \cdots
\]
converges for some $j$. The main result of this section is 

\begin{theorem*}[See Theorems \ref{th:duality_lambda-kappa} and \ref{th:nuclearity_kappa-lambda}]
Suppose that for every $j\in\N$ the sum 
\[
\sum_{n=0}^\infty \frac{\rho_j(n)}{\rho_{j+1}(n)}
\]
converges. Then $\lambda(\rho)$ and $\kappa(\rho)$ are nuclear and reflexive. Moreover, there is a bounded bilinear map 
\[
\lambda(\rho) \;\wot_R\; \kappa(\rho) \to R
\]
inducing an isomorphism between $\kappa(\rho)$ (resp. $\lambda(\rho)$) and the dual of $\lambda(\rho)$ (resp. $\kappa(\rho)$).
\end{theorem*}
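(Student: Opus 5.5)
The plan is to realise both modules through Banach pieces and reduce everything to explicit estimates on weighted $\ell^1$-spaces over $R$. For a weight $w:\N\to\Rp$ let $\ell^1_w(R)$ be the Banach $R$-module of sequences with $\sum_n \abs{x_n}w(n)<\infty$, normed by this sum. Then I would write
\[
\lambda(\rho)=\lim_j \ell^1_{\rho_j}(R),\qquad \kappa(\rho)=\colim_j \ell^1_{1/\rho_j}(R),
\]
the limit being a countable projective limit (Fréchet-like) and the colimit an ind-Banach/bornological colimit. The standing hypothesis $\sum_n\rho_j(n)/\rho_{j+1}(n)<\infty$ forces $\rho_j\le C\rho_{j+1}$ up to constants, so the transition maps are the bounded inclusions $\ell^1_{\rho_{j+1}}\to\ell^1_{\rho_j}$ and $\ell^1_{1/\rho_j}\to\ell^1_{1/\rho_{j+1}}$.

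The pairing is $\langle x,y\rangle=\sum_n x_ny_n$. Its boundedness comes from the elementary estimate $\abs{\sum x_ny_n}\le \sup_n\abs{x_n}\rho_j(n)\cdot\sum_n\abs{y_n}/\rho_j(n)$, which is valid in a Banach ring because $\abs{ab}\le\abs a\abs b$. Hence it is bounded on $\lambda(\rho)\times\ell^1_{1/\rho_j}$ for each $j$, and so extends to the completed projective tensor product.

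For duality, I first identify the dual of a single Banach piece. A bounded $\phi:\ell^1_w(R)\to R$ is determined by $a_n=\phi(e_n)$ with $\sup_n\abs{a_n}/w(n)<\infty$. This uses that the finitely supported sequences are dense and that $\ell^1_w$ is the contracting coproduct of the rank-one modules $R$ with norm scaled by $w(n)$, which is presumably available from Section \ref{sec:born}. Thus $(\ell^1_w)^\vee\cong\ell^\infty_{1/w}$.

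Next I would use the earlier results on duals of limits and colimits: the dual of a colimit is the limit of the duals, and, for the bornological dual of a countable projective limit of Banach modules, the dual is the colimit of the duals. The latter may need a direct argument: a bounded functional on $\lambda(\rho)$ is bounded on some step, since the bornology is the product one. This gives
\[
\lambda(\rho)^\vee\cong\colim_j\ell^\infty_{1/\rho_j},\qquad \kappa(\rho)^\vee\cong\lim_j\ell^\infty_{\rho_j}.
\]
The summability hypothesis then shows that $\ell^\infty_{1/\rho_j}\to\ell^1_{1/\rho_{j+1}}$ and $\ell^\infty_{\rho_{j+1}}\to\ell^1_{\rho_j}$ are bounded:
\[
\sum_n\abs{y_n}/\rho_{j+1}(n)\le \sup_n\bigl(\abs{y_n}/\rho_j(n)\bigr)\sum_n\rho_j(n)/\rho_{j+1}(n).
\]
So the $\ell^\infty$ and $\ell^1$ systems are cofinally isomorphic, which yields $\lambda(\rho)^\vee\cong\kappa(\rho)$ and $\kappa(\rho)^\vee\cong\lambda(\rho)$, both induced by the pairing. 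Reflexivity follows by composing the two isomorphisms and checking that the result is the canonical evaluation map, which is immediate on basis vectors $e_n$.

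For nuclearity, I would use the criterion recalled in Section \ref{sec:born}: the transition maps should be nuclear, i.e.\ lie in the image of $E^\vee\wot F\to\ihom(E,F)$. Writing the inclusion $\ell^1_{\rho_{j+1}}\to\ell^1_{\rho_j}$ as $\sum_n e_n^\vee\otimes e_n$, we have
\[
\|e_n^\vee\|\,\|e_n\|=\rho_{j+1}(n)^{-1}\rho_j(n),
\]
so the series converges absolutely in $(\ell^1_{\rho_{j+1}})^\vee\wot\ell^1_{\rho_j}$ precisely by the hypothesis. The same argument applies to $\kappa$ with the same sum. The main obstacle I expect is the dual of the projective limit, together with matching the paper's exact nuclearity definition (nuclear objects versus nuclear maps in $\Ind(\Ban_R)$) over a general Banach ring, including the archimedean and non-archimedean cases. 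To handle this, I would avoid duality-of-limits abstractions where possible and argue directly with the coordinate functionals $e_n^\vee$, using that their norms are exactly $\rho_j(n)^{\pm1}$.
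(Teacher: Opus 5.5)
Your skeleton matches the paper's on the formal half. That covers the pairing $\sum_n x_ny_n$, the identification $\ell^1_*(w)^\vee\cong\ell^\infty(w^{-1})$, and the exchange of $\ell^1_*$ and $\ell^\infty$ in both systems (Lemma \ref{lem:prodc_coprodc_exchange_in_lambda-kappa}). It also covers the automatic isomorphism $\kappa(\rho)^\vee\cong\varprojlim_j\ell^\infty(\rho_j)\cong\lambda(\rho)$ and nuclearity of $\kappa(\rho)$ from its nuclear inductive transition maps. However, the two steps you flag as obstacles are genuine gaps.

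\textbf{First gap: the dual of $\lambda(\rho)$.} For $\lambda(\rho)^\vee\cong\varinjlim_j\ell^1_*(\rho_j)^\vee$ you only treat single functionals. Suppose every bounded $\xi:\lambda(\rho)\to R$ satisfies $\abs{\xi(x)}\le C\norm{\pi_j(x)}$ for some $j$. Over a general $R$ this needs the paper's diagonal argument: otherwise pick $x_j$ with $\abs{\xi(x_j)}\ge 2^j\norm{\pi_j(x_j)}$. Then $e_j\mapsto x_j$ is a bounded map $\ell^1_*(\rho)\to\lambda(\rho)$ for $\rho(j)=\norm{\pi_j(x_j)}$, contradicting boundedness of $\xi$; one then extends using density of $\pi_j(\lambda(\rho))$. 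Even so, this only makes the comparison morphism bijective on underlying sets. A bijective morphism of bornological modules need not be an isomorphism. What you need is bijectivity of $\Hom_R(C,\varinjlim_j\ell^1_*(\rho_j)^\vee)\to\Hom_R(C\wot\lambda(\rho),R)$ for every Banach $C$, i.e.\ the same bound uniformly over bounded families of functionals. You can get this in two ways:
\begin{enumerate}[a)]
\item Run the diagonal argument with pairs $(c_j,x_j)$ for a morphism $C\wot\lambda(\rho)\to R$.
\item Follow the paper. $\lambda(\rho)$ is Fr\'echet of type EM (Proposition \ref{pr:EM-lambdas}), and $C$ is resolved by flat projective modules $P_1\to P_0$. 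Then $P_k\wot\lambda(\rho)\cong\varprojlim_j P_k\wot\ell^1_*(\rho_j)$ by Proposition \ref{pr:lim-wot-compatibility_for_metrizables}. This is again of type EM because $P_k\wot-$ preserves monomorphisms and epimorphisms, so the pointwise lemma applies to it.
\end{enumerate}
``The bornology is the product one'' supplies neither.

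\textbf{Second gap: nuclearity of $\lambda(\rho)$.} Your argument uses the wrong criterion. Proposition \ref{pr:equivalent_defs_of_nuclear_ind_ban} concerns a presentation of $\lambda(\rho)$ as a filtered \emph{colimit} of Banach modules, namely its Banach submodules. The projective maps $\ell^1_*(\rho_{j+1})\to\ell^1_*(\rho_j)$ are not transition maps of any such presentation, so their nuclearity does not verify the criterion, and ``the same argument'' does not transfer from $\kappa(\rho)$. You must show that every $f:B\to\lambda(\rho)$ with $B$ Banach factors through a nuclear morphism of Banach modules over $\lambda(\rho)$. The paper does this in two steps:
\begin{enumerate}[a)]
\item Write $f=\sum_n f_ne_n$ with $f_n\in B^\vee$ and $\sup_n\norm{f_n}\rho_j(n)<\infty$ for all $j$. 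Then $f$ factors through $\ell^\infty(\sigma^{-1})\to\lambda(\rho)$ for a weight $\sigma$ with $\sup_n\sigma(n)\rho_j(n)<\infty$ for all $j$. Zeros of $\norm{f_n}$ are filled in with a positive admissible weight $\sigma_0$ built from $r_j=\sup_n\rho_j(n)/\rho_{j+1}(n)$.
\item Pass to $\sigma_+=\sigma/\gamma$, where $\gamma(n)=\sum_i\rho_i(n)/\bigl(2^is_i\rho_{i+1}(n)\bigr)$ and $s_i=\sum_n\rho_i(n)/\rho_{i+1}(n)$. The bound $\gamma(n)^{-1}\le 2^js_j\rho_{j+1}(n)/\rho_j(n)$ keeps $\sigma_+$ admissible. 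Meanwhile $\sum_n\sigma(n)/\sigma_+(n)=\sum_n\gamma(n)=2$, so $\ell^\infty(\sigma^{-1})\to\ell^\infty(\sigma_+^{-1})$ is nuclear by Proposition \ref{pr:nuclear_morph_ell}.
\end{enumerate}
This Grothendieck--Pietsch-type construction is the idea missing from your proposal.
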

The above theorem generalises the results of \cite{fremod}, in which it is proven that the algebra $\sps{R}{T}{r}$ of power series converging in the open disk of radius $r$ is nuclear (see \cite[Corollary 6.9]{fremod}). Indeed, $\lambda(\rho)$ is isomorphic to $\sps{R}{T}{r}$ when $\rho$ is defined by
\begin{gather*}
\rho_j(n)=\left( r-\frac{1}{j+1} \right)^n, \quad \text{or}\\
\rho_j(n)= (j+1)^n \quad \text{ if $r=\infty$,}
\end{gather*}
for all $j,n\in\N$. With other choices of weights $\rho_j$ and monoid structures on $\lambda(\rho)$ and $\kappa(\rho)$, one obtains global versions of many classical spaces of functions from archimedean and non-archimedean functional analysis. For example, overconvergent power series, smooth or locally analytic $p$-adic functions on $\Z_p$, Dirichlet or divided power series with convergence conditions, and so on. In the same section, we prove the same results about duality and nuclearity for countable products and direct sums of copies of $R$, as they include algebras like the algebra of formal power series or the polynomial algebra (see Corollary \ref{cor:duality_prod_coprod}).

In Section \ref{sec:duality}, we introduce (commutative) bialgebras and Hopf algebras in the bornological category, and we investigate their duality. Then we focus on the main application, which is the globalisation of Amice duality. For any non-archimedean Banach ring $R$, we define the two Hopf algebras $\sps{R}{s}{1}$ and $\dbs{R}{x}{1}$, and we prove 

\begin{theorem*}[See Theorem \ref{th:duality_of_the_bialgebras}]
There is a pairing
\[
\langle-,-\rangle:\sps{R}{s}{1} \;\wot_R\; \dbs{R}{x}{1} \to R
\]
under which $\sps{R}{s}{1}$ (resp. $\dbs{R}{x}{1}$) is the dual Hopf algebra of $\dbs{R}{x}{1}$ (resp. $\sps{R}{s}{1}$).
\end{theorem*}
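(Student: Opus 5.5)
The plan is to reduce the statement to the Köthe duality theorem above (Theorems \ref{th:duality_lambda-kappa} and \ref{th:nuclearity_kappa-lambda}), and then check the Hopf structures directly on the coefficients.

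First, identify the two underlying modules as $\lambda(\rho)$ and $\kappa(\rho)$ for a single family of weights. Take $\rho_j(n)=(1-\frac{1}{j+2})^n$. Then $\sps{R}{s}{1}\cong\lambda(\rho)$ via $\sum a_n s^n\mapsto(a_n)$, as recalled in the introduction. The ratio $\rho_j(n)/\rho_{j+1}(n)=\bigl(\frac{(j+1)(j+3)}{(j+2)^2}\bigr)^n$ is geometric with ratio $<1$, so the hypothesis $\sum_n\rho_j(n)/\rho_{j+1}(n)<\infty$ holds.

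For $\dbs{R}{x}{1}$ I expect the definition to be a colimit over $r>1$ of Banach modules of sums $\sum b_n\binom{x}{n}$ with $\sum\abs{b_n}r^{-n}$-type (overconvergent Mahler) conditions. Written in the basis dual to $s^n$, this should be exactly $\kappa(\rho)$, with weights $1/\rho_j(n)=(1+\epsilon_j)^n$ up to equivalence of weight systems. So the first concrete step is to write out the norms and check that they match $\kappa(\rho)$ up to bounded isomorphism, possibly after replacing $\rho$ by an equivalent family. One then defines the pairing as the bilinear form of Theorem \ref{th:duality_lambda-kappa}, namely $\langle s^n,\binom{x}{m}\rangle=\delta_{nm}$, i.e.\ $\langle f,g\rangle=\sum a_nb_n$. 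That theorem immediately gives boundedness and the two module isomorphisms with the duals. Nuclearity from Theorem \ref{th:nuclearity_kappa-lambda} is what makes the dual of a completed tensor product the completed tensor product of the duals, $(\lambda\wot\lambda)^\vee\cong\kappa\wot\kappa$. I would invoke this, presumably via a lemma in Section \ref{sec:duality} on dual bialgebras of nuclear reflexive modules.

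Second, compare structure maps: the multiplication on one side should be dual to the comultiplication on the other, and likewise for unit/counit and antipode. It suffices to check this on topological bases, since everything is bounded and the span of monomials is dense in the relevant sense. The comultiplication on $\sps{R}{s}{1}$ comes from $1+s\mapsto(1+s)\otimes(1+s)$, i.e.\ $s\mapsto s\otimes1+1\otimes s+s\otimes s$. The multiplication on $\dbs{R}{x}{1}$ is given by the Vandermonde-type identity
\[
\binom{x}{a}\binom{x}{b}=\sum_{n}\binom{n}{a}\binom{a}{n-b}\binom{x}{n},
\]
and its coproduct is $\binom{x}{n}\mapsto\sum_{i+j=n}\binom{x}{i}\otimes\binom{x}{j}$. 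Duality then reduces to the combinatorial identities
\[
\langle s^n,\binom{x}{a}\binom{x}{b}\rangle=\text{coefficient of }s^a\otimes s^b\text{ in }(s\otimes1+1\otimes s+s\otimes s)^n,
\]
and similarly for $s^as^b=s^{a+b}$ against the coproduct of $\binom{x}{n}$. Both are integer identities, so they hold over any $R$. The antipode $s\mapsto(1+s)^{-1}-1$ matches $\binom{x}{n}\mapsto\binom{-x}{n}$ by the same coefficient check.

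The main obstacle is the first step: showing that the overconvergent module $\dbs{R}{x}{1}$, as actually defined, is isomorphic to $\kappa(\rho)$ as a bornological module. This matters especially over a non-archimedean $R$ of arbitrary nature, where norms of binomial coefficient functions are not simply $1$. It also requires checking that the multiplication is bounded there, since the Vandermonde coefficients must not destroy the overconvergence radius. I would handle this by bounding $\abs{\binom{n}{a}\binom{a}{n-b}}\le1$ using the non-archimedean hypothesis, and noting $n\le a+b$. Together these give submultiplicativity of each weight $(1+\epsilon)^{n}\le(1+\epsilon)^a(1+\epsilon)^b$.
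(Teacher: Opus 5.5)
Your reduction is the paper's. With $\rho(j,n)=\rho_j^n$ and $\rho_j\uparrow 1$, one has $\sps{R}{s}{1}=\lambda(\rho)$ and $\dbs{R}{x}{1}\cong\kappa(\rho)$, and Theorem \ref{th:duality_lambda-kappa} gives the module duality for the pairing $\langle s^n,\binom{x}{m}\rangle=\delta_{nm}$.

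What you call the main obstacle is not one. $\dbs{R}{x}{1}$ is defined as $\varinjlim_{r>1}\prodc_{n\in\N}\big[R\binom{x}{n}\big]_{r^n}$, where $\binom{x}{n}$ is a formal basis vector. So you only need Lemma \ref{lem:prodc_coprodc_exchange_in_lambda-kappa} to replace the sup-norm pieces by $\ell^1_\na$ pieces. That exchange is also what makes your basis-vector bound for the multiplication legitimate, since basis vectors are not dense in the $\ell^\infty$ pieces.

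The genuine gap is dualizability, i.e. the isomorphisms $(\lambda^{\wot k})^\vee\cong\kappa^{\wot k}$ and $(\kappa^{\wot k})^\vee\cong\lambda^{\wot k}$ for all $k$. These are part of what ``dual Hopf algebra'' means here, and you need them even to say that a comultiplication on $\kappa$ is dual to $m_\lambda$. The paper has no lemma deriving this from nuclearity. Nor is it a formal consequence of the definition, which only controls $\ihom_R(B,-)$ for Banach $B$, whereas $\lambda$ is merely Fréchet.

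The paper argues as follows. Nuclearity makes $\lambda$ flat (Proposition \ref{pr:nuclears_are_flat}), and $\lambda$ is metrizable, being Fréchet. So by Proposition \ref{pr:lim-wot-compatibility_for_metrizables}, $\lambda^{\wot k}\cong\varprojlim_j\big(\ps{R}{s}{\rho_j}\big)^{\wot k}$. This is again a sequential module $\lambda(\rho^{(k)})$ with weights $\rho_j^{n_1+\cdots+n_k}$, re-indexed by a bijection $\N\to\N^k$. The new weight system still satisfies the hypothesis of Theorem \ref{th:duality_lambda-kappa}, since $\sum(\rho_j/\rho_{j+1})^{n_1+\cdots+n_k}=(1-\rho_j/\rho_{j+1})^{-k}<\infty$. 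Applying that theorem a second time gives $(\lambda^{\wot k})^\vee\cong\kappa(\rho^{(k)})\cong\kappa^{\wot k}$, the last step because $\wot$ commutes with colimits. The other direction is symmetric.

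The same limit--tensor interchange is missing on the $\sps{R}{s}{1}$ side of your structure check. You only bound the multiplication of $\dbs{R}{x}{1}$. To have $\Delta:\sps{R}{s}{1}\to\sps{R}{s}{1}\wot\sps{R}{s}{1}$ at all, you need two things:
\begin{itemize}
\item the identification $\sps{R}{s}{1}\wot\sps{R}{s}{1}\cong\varprojlim_{\rho<1}\ps{R}{s}{\rho}\wot\ps{R}{s}{\rho}$;
\item the levelwise estimate $\norm{s\otimes 1+1\otimes s+s\otimes s}\le\rho$ for $\rho\le 1$ (Lemma \ref{lem:radius_for_def_of_bi-algebra_structure}).
\end{itemize}
Likewise the antipode needs $\norm{\alpha(s)}\le\rho$ for $\rho<1$.

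Alternatively, once dualizability is proved, you may define $\Delta$ and $\alpha$ on $\sps{R}{s}{1}$ by transporting your bounded structure maps of $\dbs{R}{x}{1}$ through Lemma \ref{lem:cond_dual-tensor}. This is the mirror image of the paper's route, which builds the structure analytically on $\sps{R}{s}{1}$ and then dualizes. Your coefficient identities are then exactly Proposition \ref{pr:formal_bs-ps-duality}.
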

For $R=\C_p$, one has
\begin{equation*}
\dbs{R}{x}{1} = \sC^\la(\Z_p,\C_p),
\end{equation*}
and the pairing $\langle-,-\rangle$ induces the Amice transform. The same pairing also induces an isomorphism identifying the dual of $\sps{\C_p}{s}{1}$ with $\sC^\la(\Z_p,\C_p)$, which is not considered in the classical references. The same remains true if $R$ is any complete subring of $\C_p$. A more interesting case is when $R$ is the ring of integers with the trivial norm, where the Hopf algebras $\sps{\Z}{s}{1}$ and $ \dbs{\Z}{x}{1}$, with their duality, constitute a global version of Amice duality: for every prime $p$, 
\begin{align*}
\dbs{\Z}{x}{1} \,\wot_\Z\, \Z_p &= \sC^\la(\Z_p,\Z_p), \\
\sps{\Z}{s}{1} \,\wot_\Z\, \Z_p &= \sps{\Z_p}{s}{1},
\end{align*}
and the base-change $-\wot_\Z \Z_p$ sends the pairing over $\Z$ to the Amice duality pairing over $\Z_p$.

%% file: sections/conv.tex
We use the following notation and conventions.
\begin{itemize}
    \item [-] number systems are denoted with blackboard bold symbols $\N,\Z,\Q,\R,\C,\Z_p,\Q_p,\C_p,\dots$;
    \item [-] $\N$ is the set of natural numbers with zero included;
    \item [-] $\Np$ is the set of non-zero natural numbers;
    \item [-] $\Rnn$ is the set of positive real numbers, including zero;
    \item [-] $\Rp$ is the set of non-zero positive real numbers;
    \item [-] rings are always assumed to be commutative with identity unless otherwise stated;
    \item [-] homomorphisms of rings preserve the identity;
    \item [-] for any ring $R$, the set of invertible elements in $R$ is denoted by $R^\times$.
\end{itemize}
We use the categorical language, and we adopt the usual convention on the existence of uncountable strongly inaccessible cardinals to avoid set-theoretic issues. If $\bC$ is a category, $X\in\bC$ means that $X$ is an object of $\bC$. Sets of homomorphisms are denoted by $\Hom(-,-)$. We use
\begin{itemize}
    \item [-] $\varinjlim$ for colimits or inductive limits;
    \item [-] $\varprojlim$ for limits or projective limits;
    \item [-] $\coprod$ for coproducts;
    \item [-] $\prod$ for products.
\end{itemize}

%% file: sections/born.tex
In this section, we recall the basic category in which we work throughout the paper. The main references are \cite{sheafy,dag,fremod}.

All the constructions that we will perform in the article are relative to a Banach ring $R$: this is a commutative ring with identity $R$ equipped with a function 
\[
\abs{-}: R \to \Rnn
\]
subject to the following conditions
\begin{enumerate}[i)]
    \item $\abs{-}$ is a \emph{norm}: for every $a,b\in R$ 
    \[
    \abs{a+b}\leq \abs{a}+\abs{b}
    \]
    and $\abs{a}=0$ if and only if $a=0$;
    \item $\abs{-}$ is \emph{sub-multiplicative}: for every $a,b\in R$ 
    \[
    \abs{ab}\leq\abs{a}\cdot\abs{b}
    \]
    and $\abs{1_R}\leq1$;
    \item $R$ is \emph{complete} with respect to the metric 
    \[
    R\times R \to \Rnn,
    \quad
    (a,b)\mapsto\abs{b-a}.
    \]
\end{enumerate}
Relative to the Banach ring $R$ there is a category $\Ban_R$ of \emph{Banach $R$-modules} whose objects are $R$-modules $M$ with a norm $\norm{-}:M\to\Rnn$ satisfying the following conditions
\begin{enumerate}[i)]
    \item $\norm{-}$ is a \emph{norm}: for every $x,y\in M$ 
    \[
    \norm{x+y}\leq \norm{x}+\norm{y}
    \]
    and $\norm{x}=0$ if and only if $x=0$;
    \item $\norm{-}$ is compatible with \emph{scalar multiplication}: for every $a\in R$ and $x\in M$
    \[
    \norm{a\cdot x}\leq\abs{a}\cdot\norm{x};
    \]
    \item $M$ is \emph{complete} with respect to the metric 
    \[
    M\times M \to \Rnn,
    \quad
    (x,y)\mapsto\norm{y-x}.
    \]
\end{enumerate}
Morphisms in $\Ban_R$ are \emph{bounded} $R$-linear maps $f:M \to N$ between Banach modules $(M,\norm{-}_M)$ and $(N,\norm{-}_N)$. Bounded means that there is a real positive constant $C$ such that
\[
\norm{f(x)}_N\leq C\cdot \norm{x}_M
\]
for every $x\in M$. The smallest such constant is denoted by $\norm{f}$ and is called the \emph{operator norm} of $f$. The operator norm can equivalently be expressed as the supremum
\[
\sup_{x\in M\setminus\{0\}} \frac{\norm{f(x)}_N}{\norm{x}_M}.
\]

\begin{definition}
    $\Banc_R$ is the non-full sub-category of $\Ban_R$ consisting of the same objects and \emph{contracting} morphisms, namely morphisms $f:M\to N$ such that $\norm{f}\leq1$. If a morphism satisfies $\norm{f(m)}_N=\norm{m}_M$ for every $m\in M$ it is said \emph{isometric}. For every real number $\rho>0$ and Banach $R$-module $M$, denote by 
    \[
    [M]_\rho
    \]
    the Banach $R$-modules with the same underlying $R$-modules and with the norm rescaled by $\rho$, i.e
    \[
    \norm{x}_{[M]_\rho}=\rho\norm{x}_M
    \]
    for every $x\in M$.
\end{definition}

Note that $[M]_\rho\cong M$ in $\Ban_R$ for all $\rho>0$ but the same is not true in $\Banc_R$ unless $\rho=1$.

\begin{definition}
A Banach $R$-module $M$ is said \emph{non-archimedean} if its norm satisfies the ultrametric inequality
\[
\norm{x_1+x_2}\leq \max_{i=1,2}\norm{x_i}.
\]
 When $R$ is non-archimedean, the full-subcategory of $\Ban_R$ consisting of the non-archimedean Banach modules is denoted by $\Ban_R^\na$. Analogously, $\Bannac_R$ is the full sub-category of non-archimedean Banach modules of $\Banc_R$.
\end{definition}

The categories of contracting morphisms are relevant because they have all limits and colimits (\cite[Proposition 3.21]{dag}). If $\{M_i:i\in I\}$ is a family of Banach $R$-modules, its coproduct in $\Banc_R$, called \emph{contracting coproduct}, is the Banach $R$-module

\begin{equation*}
\coprodc_{i\in I} M_i
\end{equation*}

whose elements are tuples $x=(x_i)_{i\in I}$ with $x_i\in M_i$ for all $i\in I$ satisfying

\begin{equation*}
\sum_{i\in I} \norm{x_i}_{M_i}<\infty.
\end{equation*}

The norm of $x$ is the sum

\begin{equation*}
\sum_{i\in I} \norm{x_i}_{M_i}
\end{equation*}

The product in $\Banc_R$, called \emph{contracting product}, is 
the Banach $R$-module

\begin{equation*}
\prodc_{i\in I} M_i
\end{equation*}

whose elements are tuples $x=(x_i)_{i\in I}$ with $x_i\in M_i$ for all $i\in I$ satisfying

\begin{equation*}
\sup_{i\in I} \norm{x_i}_{M_i}<\infty.
\end{equation*}

The norm of $x$ is the supremum
\begin{equation*}
\sup_{i\in I} \norm{x_i}_{M_i}
\end{equation*}

When $R$ is non-archimedean and we consider the category $\Bannac_R$ instead, the contracting product is computed in the same way. The contracting coproduct, denoted by
\[
\coprodnac_{i\in I} M_i,
\]
is slightly different: it consists of the tuples $x=(x_i)_{i\in I}$ with $x_i\in M_i$ for all $i\in I$ such that for all $\varepsilon>0$, the set of indexes $i\in I$ for which $\norm{x_i}_{M_i}>\varepsilon$ is finite. The norm of $x$ is the supremum

\begin{equation*}
\sup_{i\in I} \norm{x_i}_{M_i}
\end{equation*}

In particular we will use the following constructions which generalize the familiar spaces of summable sequences and bounded sequence to an arbitrary Banach ring $R$. 

\begin{definition}\label{def:ell_modules}
Let $X$ be a set with a function $\rho:X\to\Rp$. Define 
\begin{align*}
\ell^1(X,\rho)&:=\coprodc_{x\in X} [R]_{\rho(x)}
\\
\ell^1_\na(X,\rho)&:=\coprodnac_{x\in X} [R]_{\rho(x)}
\\
\ell^\infty(X,\rho)&:=\prodc_{x\in X} [R]_{\rho(x)}
\end{align*}
We use the notation
$\ell^1_*(X,\rho)$ to indicate both $\ell^1(X,\rho)$ and $\ell^1_\na(X,\rho)$.
Denote by $e_x$ the element of either $\ell^1_*(X,\rho)$ or $\ell^\infty(X,\rho)$ which is equal to $1$ in the component corresponding to $x\in X$ and zero in every other component.
When $X=\N$ we drop $X$ from the notation and just write $\ell^1_*(\rho)$ or $\ell^\infty(\rho)$. We call elements of $\ell^1_*(\rho)$ \emph{summable} sequences with respect to the \emph{weight} $\rho$ and elements of $\ell^\infty(\rho)$ \emph{bounded} sequences with respect to the \emph{weight} $\rho$. When $\rho:\N \to \Rp$ is constant and equal to $1$ we also drop $\rho$ and write simply $\ell^1_*$ or $\ell^\infty$.
\end{definition}

The category $\Ban_R$ is a closed monoidal symmetric category with respect to the \emph{completed projective} tensor product (\cite[Proposition 3.17]{dag}).
Given two Banach modules $M,N$, their completed (projective) tensor product, denoted by
\[
M \wot_R N,
\]
is the completion of the algebraic tensor product $M\otimes_R N$ with respect to the norm
\begin{equation*}
\norm{t}_{M\wot_R N}
:=
\inf
\left\{
\sum_{i=1}^n \norm{x_i}_M \norm{y_i}_N
:
t=\sum_{i=1}^n x_i\otimes y_i
\right\}
\end{equation*}

where $t$ is an element of $M\otimes_R N$ and the infimum is taken over all representations of $t$ as a finite sum of simple tensors of the form $x\otimes y$ for $x\in M$ and $y\in N$. We will drop the base ring from the notation of the tensor product and write simply $\wot$ when it is clear the underlying category considered.
The unit of the monoidal product $\wot$ is the Banach module $R$. The internal-hom of two Banach modules $M,N$ is the Banach $R$-module $\ihom_R(M,N)$ whose elements are morphisms $f:M\to N$ and the norm is the operator norm
\[
\norm{f}=\sup_{x\in M\setminus\{0\}} \frac{\norm{f(x)}_N}{\norm{x}_M}.
\]
The adjunction between the completed tensor product and the internal-hom is analogous to the algebraic adjunction: a morphism $g:M\wot N \to L$ of Banach modules corresponds to the morphism $f:M \to \ihom_R(N,L)$ satisfying
\[
f(x)(y)=g(x\otimes y)
\]
for all $x\in M$ and $y\in N$. The symmetric closed monoidal structure of $\Ban_R$ induces on $\Banc_R$ a symmetric closed monoidal structure using the same definitions. In particular, given a set $I$, a function $\rho:I\to\Rp$, a Banach module $N$ and a family of Banach modules $\{M_i:i\in I\}$, there are natural isomorphisms
\begin{equation}\label{eq:interaction_contr_(co)prod_and_monoidal_str}
\begin{aligned}
\ihom_R\Big(N,\prodc_{i\in I}[M_i]_{\rho(i)}\Big)
&\cong
\prodc_{i\in I}\left[\ihom_R(N,M_i)\right]_{\rho(i)}
\\
\ihom_R\Big(\coprodc_{i\in I}[M_i]_{\rho(i)},N\Big)
&\cong
\prodc_{i\in I}\left[\ihom_R(M_i,N)\right]_{\rho(i)^{-1}}
\\
\Big( \coprodc_{i\in I}[M_i]_{\rho(i)} \Big)\ \wot\ N
&\cong
\coprodc_{i\in I}\left[ M_i \wot N \right]_{\rho(i)}
\end{aligned}
\end{equation}
in $\Banc_R$ and $\Ban_R$. Note that even if $\ihom_R(-,-)$ is the same in $\Ban_R$ and $\Banc_R$, the underlying set of $\ihom_R(-,-)$ coincides with $\Hom_R(-,-)$ only in the category $\Ban_R$. Everything said about the monoidal structure remains true also in the non-archimedean case provided one modify the norm of the completed tensor product: if $M,N$ are non-archimedean Banach modules, 
\[
\norm{t}_{M\wot N}
:=
\inf
\left\{
\max_{i=1,\dots, n} \norm{x_i}_M \norm{y_i}_N
:
t=\sum_{i=1}^n x_i\otimes y_i
\right\}
\]
for every $t\in M\wot N$.

\begin{prop}
The category $\Ban_R$ is pre-abelian, namely it is a linear category with zero-object, finite limits and finite colimits. The inclusion functor $\Banc_R\to\Ban_R$ commutes with them. Finite coproducts are naturally equivalent to finite products. For a morphism $f:M \to N$ in $\Ban_R$ the following holds
\begin{enumerate}[\rm i)]
    \item $f$ is a monomorphism if and only if it is injective;
    \item $f$ is an epimorphism if and only if $f(M)$ is dense in $N$;
    \item $f$ is an isomorphism if and only if $f$ is bijective and there is a constant $C\in\Rp$ such that
    \[
    \norm{x}_M\leq C\norm{f(x)}_N
    \]
    for all $x\in M$.
\end{enumerate}
\end{prop}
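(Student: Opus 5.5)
We prove the proposition directly from the definitions, building each finite limit and colimit by hand and checking that the explicit objects already work in $\Banc_R$. Linearity is clear: $\Hom(M,N)$ is an abelian group because sums of bounded maps are bounded, with constant $\norm{f}+\norm{g}$. Composition is bilinear, and the zero module is a zero object. For finite products and coproducts we take $M\oplus N$ with norm $\norm{x}+\norm{y}$, or $\max$ in the non-archimedean case. This is the contracting coproduct from above, and the contracting product $\prodc$ has the same underlying module with the equivalent norm $\max(\norm{x},\norm{y})$. Since $\norm{x}+\norm{y}\le 2\max(\norm{x},\norm{y})$, the identity map is an isomorphism in $\Ban_R$, so finite products and coproducts agree. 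The same two objects compute product and coproduct in $\Banc_R$, which gives compatibility with the inclusion.

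Next we build kernels and cokernels. For $f:M\to N$, the kernel is $\Ker f=f^{-1}(0)$ with the restricted norm; it is closed since $f$ is continuous, hence complete. Any $g:L\to M$ with $fg=0$ factors through it with the same operator norm, so the universal property holds in both $\Ban_R$ and $\Banc_R$. The cokernel is $N/\overline{f(M)}$ with the quotient norm $\inf_{y\in\overline{f(M)}}\norm{n+y}$. It is a norm because we quotient by a closed submodule, and it is complete by the standard argument lifting an absolutely summable sequence. Take $h:N\to L$ with $hf=0$. Then $h$ vanishes on $\overline{f(M)}$ by continuity, and the induced map has norm at most $\norm{h}$. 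Equalizers and coequalizers are the kernel and cokernel of $f-g$, so all finite limits and colimits exist. They are preserved by $\Banc_R\to\Ban_R$ because the constructions coincide and the factorizations are contracting whenever the given maps are.

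Now the characterisations. For (i), an injective $f$ is clearly monic. Conversely, if $f$ is monic, consider the inclusion $\iota:\Ker f\to M$ and the zero map $0:\Ker f\to M$. Both satisfy $f\iota=f\circ 0$, so $\iota=0$, hence $\Ker f=0$ and $f$ is injective. For (ii), if $f(M)$ is dense and $gf=hf$, then $g=h$ by continuity. Conversely, if $f$ is epic, the projection $N\to N/\overline{f(M)}$ and the zero map agree after composing with $f$. Hence the cokernel vanishes and $\overline{f(M)}=N$.

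For (iii), the conditions are sufficient: the set-theoretic inverse is $R$-linear, and the inequality says exactly that it is bounded with norm at most $C$. Conversely, an isomorphism has a bounded inverse $g$, which gives bijectivity and $\norm{x}=\norm{gf(x)}\le\norm{g}\,\norm{f(x)}$. I expect the main subtlety to be keeping the two categories straight rather than any deep step. Over a general Banach ring there is no open mapping theorem, so bijective bounded maps need not be isomorphisms. That is why (iii) must carry the explicit constant $C$ and cannot be reduced to ``bijective''. Similarly, since monic and epic do not imply isomorphism, the category is not abelian, and only pre-abelian is claimed. Completeness of the quotient is the one routine analytic verification to write out.
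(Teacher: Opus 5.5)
Your proof is correct. It differs from the paper mainly in that the paper gives no argument of its own: it simply defers to \cite[Propositions 3.12 and 3.14]{dag}.

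You instead write out the standard explicit constructions. These are the kernel as the closed submodule $f^{-1}(0)$ with the restricted norm, the cokernel as $N/\overline{f(M)}$ with the quotient norm, and finite biproducts as $M\oplus N$ with the sum or max norm. You then read off (i)--(iii) from these constructions. This makes the statement self-contained. It also gives a concrete reason why $\Banc_R\to\Ban_R$ preserves finite limits and colimits: the very same objects, with the same factorizations, satisfy the universal properties in both categories. The citation, by contrast, is economical but leaves all of this to the reference.

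Two small points of precision are worth tightening.

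First, $\Banc_R$ is not additive, so $f-g$ is in general not a morphism there. In $\Banc_R$ you should describe the equalizer and coequalizer directly as $\{x\in M : f(x)=g(x)\}$ and $N/\overline{(f-g)(M)}$. Since $f-g$ is still a bounded $R$-linear map, these objects make sense. Your universal-property checks then go through verbatim, and the factorizations are contracting whenever the given maps are.

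Second, the passage from finite products and equalizers to all finite limits, and dually for colimits, is the standard categorical fact. State it explicitly. It is also what turns "the inclusion preserves products, coproducts, equalizers and coequalizers" into "the inclusion preserves all finite limits and colimits".
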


\begin{proof}
See \cite[Proposition 3.12, Proposition 3.14]{dag}. 
\end{proof}

The category $\Ban_R$ shares many properties with the category of algebraic modules over a ring but it lacks arbitrary limits and colimits such as infinite products and infinite coproducts. This is addressed by considering the Ind-completion of $\Ban_R$, denoted by $\Ind(\Ban_R)$. The reader is referred to \cite[Section 8]{sga} for a detailed treatment of Ind-completions. For the reader's convenience we recall the definition of Ind-completion in the case of $\Ban_R$. 

Let $\widehat{\Ban_R}$ be the category of presheaves of sets on the (small) category $\Ban_R$, i.e. functors $\Ban_R^{\rm op} \to \Set$. The Yoneda embedding
\[
\Ban_R \longrightarrow \widehat{\Ban_R},
\quad M \longmapsto \Hom_R(-,M)
\]
identifies the category $\Ban_R$ with the full subcategory of $\widehat{\Ban_R}$ consisting of representable functors.

\begin{definition}
The category $\Ind(\Ban_R)$ is the full-subcategory of $\widehat{\Ban_R}$ consisting of objects isomorphic to filtered colimits of representable functors, called \emph{ind-Banach} modules. These are functors $M:\Ban_R^{\rm op}\to \Set$ for which there is a filtered category $I$ and a diagram of Banach modules
\[
I \to \Ban_R, \quad i\mapsto M_i
\]
such that 
\[
M\cong \varinjlim_{i\in I} \Hom_R(-,M_i).
\]
The colimit on the right is denoted by
\[
\indlim_{i\in I} M_i.
\]
The set of morphisms between two ind-Banach $R$-modules $M,N$ is denoted by $\Hom_R(M,N)$.
\end{definition}

Given two ind-Banach modules $M=\indlim_i M_i$ and $N=\indlim_j N_j$, morphisms form $M$ to $N$ are morphisms in the category $\widehat{\Ban}_R$. It follows that
\[\begin{split}
\Hom_R(M,N)
&=
\Hom_{\widehat{\Ban}_R}
\left(
\varinjlim_{i\in I}\Hom_R(-,M_i) , \varinjlim_{j\in J}\Hom_R(-,N_j)
\right)
\\
&=
\varprojlim_{i\in I}\Hom_{\widehat{\Ban}_R}
\left(
\Hom_R(-,M_i) , \varinjlim_{j\in J}\Hom_R(-,N_j)
\right)
\\
&=
\varprojlim_i \varinjlim_j \Hom_R(M_i,N_j).
\end{split}\]
by the Yoneda lemma and the properties of limits and colimits in $\widehat{\Ban}_R$. When we consider the ind-banach module $M$ as a functor $M:\Ban_R^{\mathrm{op}}\to\Set$, we have by definition 
\[
M(B)=\Hom_R(B,M)
\]
for every Banach module $B$. In general, every functor $M\in\widehat{\Ban}_R$ can be written as the colimit
\[
\varinjlim_{B\in\Ban_R/M} \Hom_R(-,B)
\]
where $\Ban_R/M$ is the (small) category consisting of natural transformations
\[
\Hom_R(-,B) \to M
\]
as objects, where $B\in\Ban_R$, and commutative triangles as morphisms. Then $M$ belongs to $\Ind(\Ban_R)$ if and only if $\Ban_R/M$ is filtered.
Suppose we have a morphism $f:M \to N$ between ind-Banach modules. Since $f$ is a natural transformation between functors, it is an isomorphism if and only if for every Banach module $B$ the induced map
\[
\Hom_R(B,f):\Hom_R(B,M) \to \Hom_R(B,N),\quad g \mapsto f\circ g 
\]
is bijective. Moreover, we can always construct a filtered category $K$, two diagrams
\begin{gather*}
K \to \Ban_R,\quad k \mapsto M_k,
\\
K \to \Ban_R,\quad k\mapsto N_k,
\end{gather*}
and a morphism of diagrams $(f_k)_{k\in K}$ such that 
\[
M=\indlim_{k\in K} M_k,
\qquad
N=\indlim_{k\in K} N_k
\]
and the morphism $f$ is equivalent to the inductive limit of $(f_k)_{k\in K}$ (see \cite[Remark 2.3]{dag}).

\begin{prop}
The category $\Ind(\Ban_R)$ is a pre-abelian with all limits and all colimits. The closed symmetric monoidal structure of $\Ban_R$ induces a closed symmetric monoidal structure on $\Ind(\Ban_R)$ such that the Yoneda embedding $\Ban_R \to \Ind(\Ban_R)$ is monoidal. Given 
\[
M=\indlim_{i\in I} M_i,
\quad
N=\indlim_{j\in J} N_j
\]
the tensor product and internal-hom are computed as follows
\begin{align*}
M\wot N &= \indlim_{(i,j)\in I\times J} M_i\wot N_j,
\\
\ihom_R(M,N) &= \varprojlim_{i\in I}\indlim_{j\in J} \ihom_R(M_i,N_j).
\end{align*}
\end{prop}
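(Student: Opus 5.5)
The statement is a standard fact about Ind-completions applied to $\Ban_R$. I would first establish the limit and colimit claims, then transport the monoidal structure, and finally deduce the internal hom from the tensor–hom adjunction.

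\textbf{Limits, colimits and pre-abelianness.} By the formal theory of Ind-categories \cite[Section 8]{sga}, since $\Ban_R$ has finite colimits, $\Ind(\Ban_R)$ has all small colimits. Filtered colimits are computed in $\widehat{\Ban_R}$, and arbitrary colimits are obtained as filtered colimits of finite colimits of representables. Finite limits exist in $\Ban_R$, and filtered colimits commute with finite limits in $\widehat{\Ban_R}$. By the standard reindexing argument (cf.\ \cite[Remark 2.3]{dag}), a finite diagram in $\Ind(\Ban_R)$ can be written as a filtered colimit of finite diagrams in $\Ban_R$, so finite limits exist and are computed levelwise. Arbitrary products are also needed: for $M^{(s)}=\indlim_{i\in I_s} M^{(s)}_i$, one takes $\indlim_{(i_s)\in\prod_s I_s}\prod_s M^{(s)}_{i_s}$, which is the standard formula valid whenever the base category has finite products; for infinite $S$, the product $\prod_s M^{(s)}_{i_s}$ must be read in $\Ind(\Ban_R)$. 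Together with finite limits, these products give all limits. Additivity passes to the Ind-category because the Hom-groups $\varprojlim_i\varinjlim_j\Hom_R(M_i,N_j)$ are abelian groups and composition is bilinear. Since $\Ban_R$ has a zero object, so does $\Ind(\Ban_R)$, and hence the category is pre-abelian.

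\textbf{Monoidal structure.} I would define $M\wot N$ by the displayed formula on chosen presentations. The first thing to check is independence of the presentation. This follows because $\wot$ is a functor $\Ban_R\times\Ban_R\to\Ban_R$, and its extension to $\Ind(\Ban_R)\times\Ind(\Ban_R)$ commuting with filtered colimits in each variable is unique up to unique isomorphism by the universal property of Ind-completion. The associativity, unit and symmetry constraints are filtered colimits of the corresponding constraints in $\Ban_R$, so the coherence axioms hold. The Yoneda embedding is strong monoidal by construction, using the constant presentation. For the internal hom, I would set $\ihom_R(M,N)$ equal to the displayed limit, which exists by the first part. The adjunction then follows from the chain
\[
\Hom(L\wot M,N)=\varprojlim_{k,i}\varinjlim_j \Hom(L_k\wot M_i,N_j)\cong\varprojlim_{k,i}\varinjlim_j\Hom(L_k,\ihom_R(M_i,N_j)),
\]
which uses the Banach-level adjunction.

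\textbf{Main obstacle.} The delicate step is the final identification
\[
\varinjlim_j\Hom(L_k,\ihom_R(M_i,N_j))=\Hom\bigl(L_k,\indlim_j\ihom_R(M_i,N_j)\bigr).
\]
This holds because $L_k$ is a Banach module and hence compact in $\Ind(\Ban_R)$. After this, one still has to commute $\varprojlim_i$ out of $\Hom(L_k,-)$, which holds since Hom preserves limits, and then apply $\varprojlim_k$. Throughout, care is needed that every claimed isomorphism is natural in $L$, $M$ and $N$. Closedness and the Yoneda compatibility then follow.
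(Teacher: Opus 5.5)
Your outline follows the standard Ind-category argument that the paper only cites (SGA for limits and colimits, Schneiders for the monoidal structure). The steps for colimits, finite limits, additivity, the tensor product and the adjunction chain are fine.

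\textbf{The gap: infinite products.} This is the one non-formal claim, and it is not established. The formula $\indlim_{(i_s)\in\prod_s I_s}\prod_s M^{(s)}_{i_s}$ is the product in $\Ind(\mathcal C)$ when $\mathcal C$ has the corresponding \emph{small} products; finite products are not enough. $\Ban_R$ has no infinite products. Saying that $\prod_s M^{(s)}_{i_s}$ ``must be read in $\Ind(\Ban_R)$'' assumes exactly what is to be proved, namely that a family of Banach modules has a product in $\Ind(\Ban_R)$.

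Your internal hom $\varprojlim_{i\in I}\indlim_j\ihom_R(M_i,N_j)$ is a cofiltered limit over an arbitrary small $I$. So its existence, and hence closedness, rests on the same unproved step. By contrast, the step you flag as the main obstacle is immediate: $\Hom_R(L_k,\indlim_j X_j)=\varinjlim_j\Hom_R(L_k,X_j)$ is simply the definition of morphisms in $\Ind(\Ban_R)$ out of a Banach module.

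\textbf{How to close it.} Either of the following works.

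\emph{Abstract route.} $\Ban_R$ is (essentially) small and has finite colimits. Hence a presheaf on $\Ban_R$ is ind-representable if and only if it sends finite colimits to finite limits, because its category of elements is then filtered. This property is stable under arbitrary limits in $\widehat{\Ban_R}$. So $\Ind(\Ban_R)$ is closed under all small limits there, computed objectwise.

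\emph{Concrete route.} For Banach modules $\{M_s\}_{s\in S}$, set
\[
P=\indlim_{\varphi\in\Upsilon(S)}\prodc_{s\in S}[M_s]_{\varphi(s)^{-1}}
\]
(compare Proposition~\ref{pr:prod_of_Banach_is_colimit}). For a Banach module $B$, compactness and \eqref{eq:interaction_contr_(co)prod_and_monoidal_str} show that $\Hom_R(B,P)$ is the set of families $(f_s)$ with $\sup_s\norm{f_s}/\varphi(s)<\infty$ for some $\varphi$. Choosing $\varphi(s)\geq\norm{f_s}$, this is all of $\prod_s\Hom_R(B,M_s)$, so $P$ is the product. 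Passing to limits over a presentation of the source gives the same for ind-objects mapping in.

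With products of Banach modules available, your reduction over $\prod_s I_s$ goes through. It uses that Banach modules are compact in $\Ind(\Ban_R)$, and that in $\Set$ a product of filtered colimits is the colimit over the product index category. This gives all products, hence all limits, hence your internal hom.
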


\begin{proof}
For limits and colimits see \cite[Proposition 8.9.1]{sga}. For the monoidal structure see \cite[Proposition 2.1.19]{quab}. To check that the latter result applies to the case of $\Ind(\Ban_R)$ see \cite[Proposition 3.15, Lemma 3.27]{dag}. 
\end{proof}

The category $\Ind(\Ban_R)$ shares many properties with the category of modules over a ring. It is suitable for homological algebra in the language of quasi-abelian categories \cite{quab}. While we will not deploy this formalism in its full generality, there are subtle differences from classical abelian categories to keep in mind. First, a morphism $f:M\to N$ in $\Ban_R$ or $\Ind(\Ban_R)$ can have trivial kernel and trivial cokernel even if it is not an isomorphism. Second, we need to be careful with the meaning of projective or flat objects.
We can always make the following commutative diagram out of $f$:
\begin{equation}\label{eq:(co)ker_(co)image_diagram}
\begin{tikzcd}
\Ker(f) \ar[r] 
&
M 
\ar[r,"f"] \ar[d]
& 
N 
\ar[r] \ar[d]
& 
\Coker(f) 
\\
 & \Coim(f) \ar[r, "\tilde{f}"] & \im(f) & 
\end{tikzcd}
\end{equation}
and $\tilde{f}$ is a monomorphism and an epimorphism. 

\begin{definition}
A morphism $f$ in $\Ind(\Ban_R)$ is said \emph{strict} if the induced morphism $\tilde{f}:\Coim(f) \to \im(f)$ fitting in the diagram \eqref{eq:(co)ker_(co)image_diagram} is an isomorphism. 
\end{definition}

\begin{definition}
An object $P\in \Ind(\Ban_R)$ is \emph{projective} if the functor
\[
\Hom_R(P,-): \Ind(\Ban_R) \to \Set
\]
sends strict epimorphisms to surjective maps.
\end{definition} 

\begin{definition}\label{def:flat_object}
An object $E\in\Ind(\Ban_R)$ is \emph{flat} if the functor
\[
E\wot - : \Ind(\Ban_R) \to \Ind(\Ban_R)
\]
commutes with all finite limits.
\end{definition}

Bearing the above definitions in mind, we set out below a few homological properties of $\Ban_R$ that we'll need in our computations.

\begin{lemma}\label{lem:ell1_is_flat_proj}
For every set $X$ with a map $\rho:X\to \Rp$, the Banach module
\[
\ell^1_*(X,\rho)
\]
introduced in Definition \ref{def:ell_modules} is projective and flat.
\end{lemma}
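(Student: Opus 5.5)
We prove the two properties separately, in each case reducing to the universal properties of the contracting coproduct and the description of strict epimorphisms in $\Ind(\Ban_R)$. First note that $[R]_{\rho(x)}\cong R$ in $\Ban_R$, but we keep the weights because everything is computed in $\Banc_R$ (resp. $\Bannac_R$), where $\ell^1_*(X,\rho)$ is a genuine coproduct.

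\emph{Projectivity.} Let $f:M\to N$ be a strict epimorphism in $\Ind(\Ban_R)$ and $g:\ell^1_*(X,\rho)\to N$ a morphism. Since $\ell^1_*(X,\rho)$ is Banach, hence compact in $\Ind(\Ban_R)$, $g$ factors through some $N_j$ of a presentation $N=\indlim N_j$. Using \cite[Remark 2.3]{dag} we may write $f=\indlim_k f_k$ with $f_k:M_k\to N_k$. Strict epimorphisms in $\Ind(\Ban_R)$ can be represented levelwise by strict epimorphisms of Banach modules, i.e.\ open surjections. This is the delicate point: I would cite the corresponding statement in \cite{dag}, or reduce to the cokernel of the kernel being computed levelwise. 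Then there is a constant $C$ such that every $n\in N_k$ admits a preimage $m$ with $\norm{m}\le C\norm{n}$. For each $x\in X$ choose $m_x\in M_k$ with $f_k(m_x)=g(e_x)$ and $\norm{m_x}\le 2C\norm{g(e_x)}\le 2C\norm{g}\rho(x)$. Define
\[
h\Big(\sum_x a_xe_x\Big)=\sum_x a_x m_x.
\]
This converges absolutely, or in the ultrametric sense in the $\na$ case, and satisfies $\norm{h}\le 2C\norm{g}$. Thus $h$ is bounded, and $f_k\circ h=g$ on the $e_x$, hence everywhere by continuity. In the archimedean case this is just the universal property of $\coprodc$ applied to $[R]_{\rho(x)}\to [M_k]_{(2C\norm g)^{-1}}$.

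\emph{Flatness.} By the third isomorphism in \eqref{eq:interaction_contr_(co)prod_and_monoidal_str}, extended levelwise to ind-objects, we have
\[
\ell^1_*(X,\rho)\wot N\cong\coprodc_{x}[N]_{\rho(x)}
\]
(resp.\ $\coprodnac$), functorially in $N$. Since this functor preserves the zero object and finite products, it suffices to show that it preserves kernels. Kernels in $\Ind(\Ban_R)$ of a morphism presented levelwise are computed levelwise, and filtered colimits of Banach diagrams commute with finite limits. So we reduce to $u:N\to N'$ in $\Ban_R$ with kernel $K$, a closed submodule carrying the restricted norm. The kernel of the induced map $\coprod_x[N]_{\rho(x)}\to\coprod_x[N']_{\rho(x)}$ consists of summable tuples with every component in $K$, with the same norm. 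This is exactly $\coprod_x[K]_{\rho(x)}$, isometrically.

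The main obstacle is the projectivity step: justifying that a strict epimorphism in $\Ind(\Ban_R)$ admits a levelwise representation by open surjections with a uniform lifting constant. If that is awkward, I would instead cite the known result \cite[Lemma 3.27 or nearby]{dag} that $\ell^1$-type modules are projective in $\Ban_R$ and in $\Ind(\Ban_R)$.
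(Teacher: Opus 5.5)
Your proposal is correct. The paper does not prove the lemma directly: it cites \cite[Lemma 3.38]{fremod} for projectivity, and \cite[Proposition 3.13]{sheafy} for flatness (proved there for $\ell^1_\na(X,\rho)$ over non-archimedean $R$, with the remark that the same proof works for $\ell^1(X,\rho)$). You instead prove both properties directly.

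Your flatness argument is complete as written. Additivity reduces finite limits to kernels. Both $\wot$ and kernels of levelwise-presented morphisms are computed levelwise in $\Ind(\Ban_R)$. The third isomorphism in \eqref{eq:interaction_contr_(co)prod_and_monoidal_str} then identifies the kernel of $\coprodc_x[N]_{\rho(x)}\to\coprodc_x[N']_{\rho(x)}$ (resp.\ with $\coprodnac$) isometrically with the corresponding coproduct of copies of $K$.

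The step you flag as delicate in the projectivity argument is elementary, along exactly the line you suggest. Write $f=\indlim_k f_k$. Since kernels and cokernels are computed levelwise, $\Coim(f)\cong\indlim_k M_k/\Ker(f_k)$ with quotient norms. Because $f$ is a strict epimorphism, this is $N$. You should then factor $g$ through one of these quotients, rather than through a stage $N_j$ of an unrelated presentation. The lifting constant can be taken to be any $1+\varepsilon$, and your basis-wise lift finishes the proof. In the $\na$ case, convergence of $\sum_x a_x m_x$ uses that $M_k$ is ultrametric, so one is working in $\Ind(\Ban^{\na}_R)$.

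As for what each route buys: the citation route keeps the paper short and defers all verification to the literature. Your route makes the lemma self-contained and isolates the two facts actually used: uniform lifting constants for strict epimorphisms, and the identification of $\ell^1_*(X,\rho)\wot N$ with a contracting coproduct of rescaled copies of $N$. It also shows plainly that the archimedean and non-archimedean variants are handled by a single argument.
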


\begin{proof}
The fact that $\ell^1_*(X,\rho)$ is projective is proved in \cite[Lemma 3.38]{fremod}. In \cite[Proposition 3.13]{sheafy} it is proved that $\ell^1_\na(X,\rho)$ is flat in the sense of Definition \ref{def:flat_object} when $R$ is non-archimedean but the same proof works also for $\ell^1(X,\rho)$ and $R$ general.
\end{proof}

\begin{prop}\label{pr:proj_resol_of_Banach}
Every Banach module $M$ is the cokernel of a morphism
\[
P_1 \to P_0
\]
between flat projective Banach modules.
\end{prop}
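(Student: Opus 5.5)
We resolve $M$ by $\ell^1$-type modules twice. First we take the canonical surjection from an $\ell^1$-space onto $M$, and then we resolve its kernel in the same way.

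\emph{Step 1: cover $M$.} Let $X=M\setminus\{0\}$ with weight $\rho(x)=\norm{x}_M$, and set $P_0=\ell^1(X,\rho)$. In the non-archimedean setting, when $M\in\Ban^\na_R$, we use $\ell^1_\na(X,\rho)$ instead. Define
\[
\pi:P_0\to M,\qquad (a_x)_{x\in X}\mapsto \sum_{x\in X} a_x x .
\]
The series converges absolutely, since $\norm{a_x x}\le\abs{a_x}\norm{x}$, and this bound gives $\norm{\pi}\le 1$. The map is surjective because $\pi(e_x)=x$ and $\norm{e_x}_{P_0}=\norm{x}_M$. The same equality shows that $\pi$ is a strict epimorphism: the quotient norm $\inf\{\norm{a}:\pi(a)=m\}$ is at most $\norm{m}_M$, and it is at least $\norm{m}_M$ because $\pi$ is contracting. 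Hence $M$ is isometrically isomorphic to $P_0/\Ker(\pi)$.

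\emph{Step 2: resolve the kernel.} The kernel $K=\Ker(\pi)$ is a closed submodule of $P_0$, so it is a Banach module with the restricted norm. Applying Step 1 to $K$ gives a contracting surjection $P_1=\ell^1(K\setminus\{0\},\norm{-})\to K$. Composing with the inclusion $K\hookrightarrow P_0$ yields a morphism $d:P_1\to P_0$ whose image is exactly $K$.

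\emph{Step 3: identify the cokernel.} In $\Ban_R$ the cokernel of $d$ is $P_0/\overline{d(P_1)}$ with the quotient norm. Since $d(P_1)=K$ is already closed, this is $P_0/K\cong M$ by Step 1. By Lemma \ref{lem:ell1_is_flat_proj}, both $P_0$ and $P_1$ are flat and projective, which completes the argument.

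The only delicate point is Step 1: we need the quotient norm to agree with the original norm, and not merely to be equivalent to it. The choice of weights $\rho(x)=\norm{x}$ gives exactly this. In the non-archimedean case one must also check that $\pi$ is well defined on $\ell^1_\na$. This holds because $a_x x\to 0$ along the index set, and the ultrametric inequality then makes the series converge.
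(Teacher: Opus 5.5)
Your proposal is correct and follows essentially the same route as the paper. You cover $M$ by $P_0=\ell^1_*(M\setminus\{0\},\norm{-}_M)$ via $e_x\mapsto x$, cover the kernel in the same way, and compose with the inclusion; the only difference is that you check directly (using $\norm{e_x}=\norm{x}$) that the quotient norm on $P_0/K$ equals $\norm{-}_M$ and compute the cokernel explicitly as $P_0/K$, whereas the paper cites \cite[Lemma 3.27]{dag} for strictness and argues abstractly that precomposing with the epimorphism $P_1\to K$ does not change the cokernel.
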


\begin{proof}
Let $M\in\Ban_R$. Consider
\[
P_0:=\ell^1_*\big( M\setminus\{0\}\,,\,\norm{-}_M \big).
\]
There is a unique morphism $f:P \to M$ such that
\[
f(e_x)=x
\]
for all $x\in M\setminus\{0\}$. By \cite[Lemma 3.27]{dag}, the morphism $f$ is a strict epimorphism. Therefore $M$ is the cokernel of the kernel 
\[
\iota:\Ker(f) \to P_0.
\]
We can repeat the same construction for the Banach module $\Ker(f)$ obtaining a flat projective $P_1$ with a strict epimorphism
\[
g:P_1 \to \Ker(f).
\]
Then $\iota\circ g: P_1 \to P_0$ is a strict morphism whose cokernel is the same as the cokernel of $f$, which is $M$. By Lemma \ref{lem:ell1_is_flat_proj} we can conclude that $P_0,P_1$ are flat and projective.
\end{proof}

Besides the abstract definition, many objects of $\Ind(\Ban_R)$ admit a more concrete description in terms of $R$-modules with additional structure.

\begin{definition}
An object $M$ of $\Ind(\Ban_R)$ is called a (complete) \emph{bornological} $R$-module if it is isomorphic to the colimit of a \emph{monomorphic} filtered inductive system of Banach $R$-modules; i.e., there is a directed set $I$ and a diagram of Banach modules
\[
I \to \Ban_R,\quad i \mapsto M_i
\]
such that 
\[
M\cong\indlim_{i\in I} M_i
\]
and for every pair of indices $i,j\in I$ with $i<j$, the induced morphism
\[
M_i \to M_j
\]
of the diagram is an injective map. The full subcategory of $\Ind(\Ban_R)$ consisting of bornological $R$-modules is denoted by $\Born_R$.
\end{definition}

\begin{prop}\label{pr:born_is_concrete_reflexive_subcat}
The category $\Born_R$ has all limits and colimits. It is a reflective subcategory of $\Ind(\Ban_R)$ in the sense that the embedding functor $\Born_R \to \Ind(\Ban_R)$ has a left adjoint. 
The forgetful functor $\Ban_R\to \Set$ extends to a functor 
\[
U:\Ind(\Ban_R) \to \Set
\]
which commutes with filtered colimits and is the essentially unique extension of the forgetful functor with this property. The restriction of $U$ to the full subcategory $\Born_R$ is faithful and it has a left adjoint
\[
\Set \to \Born_R , 
\quad 
X \mapsto \coprod_{x\in X} R.
\]
\end{prop}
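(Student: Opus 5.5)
The plan is to identify $\Born_R$ with a more concrete category and then reduce each claim to a known statement about $\Ind(\Ban_R)$ or about Banach modules. For the reflectivity, I will build the left adjoint explicitly. Write an ind-Banach module as $M=\indlim_{i\in I} M_i$. For each $i$, let $\bar M_i$ be the coimage, i.e. the cokernel in $\Ban_R$, of the map $\Ker(M_i\to \varinjlim^{\Set}_j U(M_j))$. A cleaner choice is the quotient of $M_i$ by the closure of the kernel of $M_i\to \varinjlim_j U(M_j)$, computed in $\Set$ or in $R$-modules. I then replace the kernel by its closure and iterate; the closures are already closed because a closed kernel stays closed. The transition maps $\bar M_i\to\bar M_j$ are then injective, so $\indlim \bar M_i$ is bornological. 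Checking the universal property with respect to maps into monomorphic systems is a direct computation with $\varprojlim_i\varinjlim_j\Hom(M_i,N_j)$. Any map $M_i\to N_j$ with $N$ monomorphic kills elements whose image vanishes in the colimit, hence kills the closure, since $N_j$ is Hausdorff. This is the step I expect to be the main obstacle. The point needing care is that the closure of the union of kernels might need transfinite iteration. I will follow \cite[Proposition 3.33]{dag} (the analogous result there) and, if necessary, simply cite it.

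Existence of limits and colimits then follows formally. Colimits in $\Born_R$ are obtained by applying the reflector to colimits in $\Ind(\Ban_R)$, which exist by the previous proposition. For limits, I will show that $\Born_R$ is closed under limits in $\Ind(\Ban_R)$. It suffices to treat products and kernels. A kernel of a map between monomorphic systems can be computed levelwise after cofinal reindexing, and sub-objects of injective systems remain injective. Products are $\prod_k \indlim_{i_k}M_{k,i_k}=\indlim_{(i_k)}\prodc_k$-type systems, whose transitions are injective because they are products of injections.

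For $U$, I define $U(\indlim M_i)=\varinjlim_i U(M_i)$, the colimit computed in $\Set$. This is well defined on morphisms via the formula $\varprojlim_i\varinjlim_j\Hom(M_i,N_j)$. It commutes with filtered colimits because filtered colimits in $\Ind$ are computed by concatenating indexing categories, and colimits commute with colimits. Essential uniqueness holds because every object is a filtered colimit of representables.

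For faithfulness on $\Born_R$, take two maps $f,g\colon M\to N$ with $U(f)=U(g)$. Restrict to $M_i$; both maps factor through some $N_j$, and since the $N_j\to U(N)$ are injective, they agree as set maps $M_i\to N_j$, hence agree. Finally, for $X\in\Set$, the coproduct $\coprod_X R=\indlim_{F\subset X\text{ finite}}R^F$ is bornological. A map $\coprod_X R\to N$ is the same as a compatible family of maps $R\to N$, i.e. elements of $U(N)$, because $\Hom(R,N_j)=U(N_j)$ and $U$ commutes with filtered colimits. This gives the adjunction $\Hom(\coprod_X R,N)\cong\Hom_{\Set}(X,U(N))$.
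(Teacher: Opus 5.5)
Your outline is correct, and for colimits, for the construction of $U$, and for the free functor it is the paper's argument. Elsewhere you are more hands-on than the paper:

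\begin{itemize}
\item For limits, the paper argues formally from reflectivity: a cone into $\iota\circ F$ factors through the unit $C\to\iota(C^{\mathrm{sep}})$. You instead check directly that bornological modules are closed under products and kernels in $\Ind(\Ban_R)$.
\item For faithfulness, the paper cites \cite{dag}. You prove it directly from the injectivity of $U(N_j)\to U(N)$.
\item For the reflector, the paper cites \cite{condborn}, while you attempt an explicit construction.
\end{itemize}

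Your direct arguments for limits and faithfulness are fine. They make those parts independent of the reflector, on which the paper's treatment of limits rests entirely.

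The step that fails as written is the reflector. Let $t_{ij}\colon M_i\to M_j$ denote the transition maps. Quotienting once by $K_i=\overline{\bigcup_{j\ge i}\Ker(t_{ij})}$ does not make the transitions injective. Injectivity of $M_i/K_i\to M_j/K_j$ needs $t_{ij}^{-1}(K_j)\subseteq K_i$, and a limit of elements of $M_j$ that die later need not come from elements of $M_i$ that die later.

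Here is a counterexample. Take $\rho(n)=2^{-n}$, $v=\sum_n e_n\in\ell^1_*(\rho)$, $M_0=R$ and $M_l=\ell^1_*(\rho)$ for $l\ge1$. Set $t_{01}(a)=av$, and for $1\le k<l$ let $t_{kl}$ be the projection killing $e_0,\dots,e_{l-1}$.
\begin{itemize}
\item Every $t_{0l}$ is injective, so $K_0=0$.
\item $\bigcup_l\Ker(t_{1l})$ is the module of finitely supported sequences, which is dense, so $K_1=M_1$.
\end{itemize}
Hence the transition $R=\bar M_0\to\bar M_1=0$ is not injective, and the iteration is genuinely needed, not merely possibly needed.

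The iteration does work:
\begin{itemize}
\item Put $K_i^{(0)}=0$ and $K_i^{(\alpha+1)}=\overline{\bigcup_{j\ge i}t_{ij}^{-1}(K_j^{(\alpha)})}$, and take closures of unions at limit ordinals.
\item By induction the chain increases and $t_{ij}(K_i^{(\alpha)})\subseteq K_j^{(\alpha)}$. It therefore stabilizes at closed submodules $K_i$ with $t_{ij}^{-1}(K_j)\subseteq K_i$, so $\indlim_i M_i/K_i$ is bornological.
\item Your own observation gives the universal property. A morphism into a monomorphic system kills whatever dies in the colimit, hence by continuity its closure. The same transfinite induction then shows that every morphism from $M$ to a bornological module kills each $K_i^{(\alpha)}$.
\end{itemize}
Alternatively, simply cite this step, as the paper does.

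A smaller point concerns products. $\indlim_{(i_k)}\prodc_k M_{k,i_k}$ is not the product in $\Ind(\Ban_R)$: for $M_k=R$ it is the Banach module $\ell^\infty$ rather than $\prod R$. As in Proposition \ref{pr:prod_of_Banach_is_colimit}, you must also index over weights $\varphi\in\Upsilon$ of the index set of the family. The order on pairs $((i_k),\varphi)$ must be chosen so that the resulting transition maps $\prodc_k[M_{k,i_k}]_{\varphi(k)^{-1}}\to\prodc_k[M_{k,i'_k}]_{\varphi'(k)^{-1}}$ are bounded. These transitions are still injective, so your conclusion that products of bornological modules are bornological stands.
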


\begin{proof}
The left adjoint of the embedding $\iota:\Born_R \to \Ind(\Ban_R)$ is constructed in the Appendix of \cite[Proposition A.11]{condborn}. For an object 
\[
M=\indlim_{i\in I} M_i
\]
in $\Ind(\Ban_R)$, the left adjoint of $\iota$ sends $M$ to the bornological $R$-module
\[
M^{\mathrm{sep}}:=\indlim_{i\in I} \Coim(\alpha_i)
\]
where $\alpha_i:M_i \to M$ is the morphism obtained by the universal property of the colimit. The existence of all limits and colimits in $\Born_R$ follows from the fact that it is a reflective subcategory of a category with all limits and colimits. 
If $F:I \to \Born_R$ is a diagram, every cone $C$ of $\iota\circ F$ factors through the unit of the adjunction $C\to \iota(C^{\mathrm{sep}})$. Therefore the limit cone of $\iota\circ F$ must be bornological. For the existence of colimits, one can check that 
\[
\left( \colim \iota\circ F \right)^{\mathrm{sep}}
\]
satisfies the universal property of colimits in $\Born_R$.
The forgetful functor of $\Ban_R$ extends to $\Ind(\Ban_R)$ by the universal property of ind-categories and the fact that $\Set$ has all filtered colimits (\cite[Proposition 8.7.3]{sga}). The proof that the restriction of $U$ to $\Born_R$ is faithful can be found in \cite[Proposition 3.31]{dag}. Given that $U:\Ind(\Ban_R)\to \Set$ is represented by $R$ and that $\Ind(\Ban_R)$ has all coproducts, it is easy to see that the free module-construction 
\[
\mathrm{Free}:\Set \to \Ind(\Ban_R),\quad X \to \coprod_{x\in X} R
\]
is the left adjoint of $U$. We can express the free module over $X\in\Set$ as follows
\[
\coprod_{x\in X} R
\cong
\indlim_{F\subseteq X,\text{ $F$ finite}} \coprod_{x\in F} R
\]
where the coproduct $\coprod_{x\in F}$ is computed in $\Ban_R$. From this expression we see that $\coprod_{x\in X} R$ is bornological. Therefore $\mathrm{Free}$ factor through $\iota$ and it is left adjoint to $U\circ\iota$.
\end{proof}

We now explain the meaning of Proposition \ref{pr:born_is_concrete_reflexive_subcat} and the advantage of considering bornological modules. Recall that monomorphisms in $\Ban_R$ coincide with injective bounded maps. Given a bornological module $M$ as a monomorphic inductive limit
\[
M=\indlim_{i\in I} M_i
\]
we have 
\[
U(M)=\varinjlim_{i\in I} U(M_i)
\]
and the maps of the inductive system $U(M_i)\to U(M_j)$ for $i<j$ are injective. The set $U(M)$ is essentially the increasing union of the underlying sets of the Banach modules $M_i$ as $i\in I$ increase. This means that a bornological module is uniquely identified by taking an (algebraic) $R$-module $M$ and specifying a family $\{M_i:i\in I\}$ of (algebraic) sub-modules of $M$ and a family of norms $\norm{-}_{M_i}:M_i \to \R_+$ such that
\begin{enumerate}[i)]
    \item $I$ is a directed set;
    \item $(M_i,\norm{-}_{M_i})$ is a Banach $R$-module;
    \item for every $i<j$ in $I$, the sub-module $M_i$ is included in $M_j$ and the map $M_i \to M_j$ induced by the inclusion is bounded: there is a constant $C\in\R_+$ such that for all $x\in M_i$
    \[
    \norm{x}_{M_j}\leq C\norm{x}_{M_i}.
    \]
\end{enumerate}
Also morphisms between bornological modules have a concrete description in terms of linear maps preserving a structure. Suppose $M,N$ are bornological modules given as unions of Banach modules 
\[
M=\bigcup_{i\in I}M_i,\quad N=\bigcup_{j\in J} N_j.
\]
Then a morphism $f:M \to N$ in $\Born_R$ is the same as an $R$-linear map $f:M \to N$ satisfying:
\begin{enumerate}[i)]
    \item for every $i\in I$ there is a $j\in J$ such that $f(M_i)\subseteq N_j$;
    \item the restriction $M_i \to N_j,\ x\mapsto f(x)$ is bounded: there is a constant $C(i,j)\in\R_+$ depending on $i$ and $j$ such that for all $x\in M_i$
    \[
    \norm{f(x)}_{N_j}\leq C(i,j)\norm{x}_{M_i}.
    \]
\end{enumerate}
The fact that the forgetful functor $U:\Born_R \to \Set$ has a left adjoint implies that it commutes with all limits. Thanks to this we know that the underlying $R$-module of a limit of bornological modules is just the limit of the corresponding underlying $R$-modules. For example, the underlying $R$-module of the product 
\[
\prod_{i\in I} M_i
\]
of a family of bornological modules $\{M_i: i\in I\}$ is just the product
\[
\prod_{i\in I} U(M_i).
\]
Another important example is the kernel of a morphism $f:M \to N$ in $\Born_R$. Its underlying $R$-module is just 
\[
\left\{ x\in M: f(x)=0 \right\}.
\]
In particular, $f$ is a monomorphism if and only if it is injective as a map of sets.

\subsection{The duality functor and Nuclear modules}
Consider the functor 
\[
\ihom_R(-,R):\Ind(\Ban_R)^{\mathrm{op}} \to \Ind(\Ban_R).
\]
For an $M\in\Ind(\Ban_R)$, we adopt the notation
\[
M^\vee:=\ihom_R(M,R).
\] 
For every $M\in\Ind(\Ban_R)$ there is a canonical morphism
\begin{equation*}
\mathrm{ev}_M:M^\vee \wot M \to R,
\quad
\xi\otimes x \mapsto \langle \xi,x\rangle:=\xi(x)
\end{equation*}
corresponding to the identity of $M^\vee$ under the adjunction
\[
\Hom_R(M^\vee\wot M, R)\cong\Hom_R(M^\vee,\ihom_R(M,R))=\End_R(M^\vee).
\]
Since the monoidal product $\wot$ is symmetric, we obtain by the same adjunction also a natural morphism
\[
M \to M^{\vee\vee},
\quad
x\mapsto \langle-,x\rangle
\]

\begin{definition}
Let $M\in\Ind(\Ban_R)$. The object $M^\vee$ is called the \emph{dual} of $M$. The ind-Banach module $M$ is said \emph{reflexive} if the natural morphism
\[
M \to M^{\vee\vee}
\]
is an isomorphism. Given two objects $M,N\in\Ind(\Ban_R)$ and a morphism 
\[
g:M\wot N \to R
\]
we say that they are \emph{dual via }$g$ if the two morphisms 
\[
M \to N^\vee, \quad N \to M^\vee
\]
induced by $g$ through adjunction are isomorphisms.
\end{definition}

Given two objects $M,N\in\Ind(\Ban_R)$, there is a canonical morphism 
\begin{equation}\label{eq:canonical_morphism_nuclearity}
M^\vee \wot N \to \ihom_R(M,N)   
\end{equation}
corresponding to the morphism
\[
M^\vee\wot N \wot M \to N, 
\quad
\xi\otimes y \otimes x \mapsto \langle \xi,x\rangle y
\]
under the tensor-hom adjunction. The latter is obtained composing $\mathrm{ev}_M\wot\id_N$ with the isomorphism
\[
M^\vee\wot M \wot N \cong M^\vee \wot N \wot M.
\]

\begin{definition}
A morphism $f:M\to N$ between Banach $R$-modules is said \emph{nuclear} if it belongs to the set-theoretic image of the morphism \eqref{eq:canonical_morphism_nuclearity}, or equivalently, if it belongs to the image of the map
\[
\Hom_R(R,M^\vee \wot N) \to \Hom_R(M,N).
\]
Concretely, $f$ is nuclear if there is a sequence $(\xi_n)_{n\in\N}$ of elements of $M^\vee$ and a sequence $(y_n)_{n\in\N}$ of elements of $N$ such that 
\begin{gather*}
\sum_{n=0}^\infty \norm{\xi_n}_{M^\vee}\norm{y_n}_N <\infty,
\\
\left(\;
\lim_{n\to\infty}\ \norm{\xi_n}_{M^\vee}\norm{y_n}_N = 0
\quad
\text{in the non-archimedean case}
\; \right)
\end{gather*}
and for every $x\in M$
\[
f(x)=\sum_{n=0}^\infty \langle \xi_n,x\rangle y_n.
\]
We say that the tensor 
\[
\sum_{n=0}^\infty \xi_n \otimes y_n \in M^\vee\wot N
\]
\emph{represents} the morphism $f$.
\end{definition}

In the next Proposition are collected the stability properties of nuclear morphisms

\begin{prop}
In the category $\Ban_R$, the following holds:
\begin{enumerate}[\rm i)]
    \item if $f:M\to N$ is a nuclear morphism, then 
    \[
    f^\vee:N^\vee\to M^\vee
    \]
    is also nuclear;
    \item if $f:M\to N$ is nuclear and $g:M'\to M,\;h:N \to N'$ are any morphisms in $\Ban_R$, then $f\circ g$ and $h\circ f$ are nuclear;
    \item if $f_1:M_1 \to N_1$ and $f_2:M_2 \to N_2$ are nuclear morphisms then 
    \[
    f_1\wot f_2 : M_1 \wot M_2 \to N_1 \wot N_2
    \]
    is nuclear.
\end{enumerate}
\end{prop}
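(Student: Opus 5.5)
We will work throughout with the concrete description of nuclearity. A morphism $f\colon M\to N$ in $\Ban_R$ is nuclear exactly when it is represented by a tensor $\sum_n \xi_n\otimes y_n\in M^\vee\wot N$, with $\sum_n\norm{\xi_n}\norm{y_n}<\infty$ (or tending to zero in the non-archimedean case). Each of the three items then amounts to producing an explicit representing tensor for the new morphism and checking the summability condition on norms. We will use the definition of the operator norm, and the fact that for simple tensors the projective norm satisfies $\norm{a\otimes b}\le\norm{a}\norm{b}$. The non-archimedean case runs in parallel, with sums replaced by suprema and summability replaced by convergence to zero.

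We start with ii), which is the easiest. Suppose $f$ is represented by $\sum_n\xi_n\otimes y_n$.
\begin{itemize}
\item[-] For $f\circ g$ we take the representing tensor $\sum_n (\xi_n\circ g)\otimes y_n$. Here $\xi_n\circ g=g^\vee(\xi_n)\in M'^\vee$ and $\norm{\xi_n\circ g}\le\norm{\xi_n}\norm{g}$, so the norm sum is bounded by $\norm{g}\sum_n\norm{\xi_n}\norm{y_n}$.
\item[-] For $h\circ f$ we take $\sum_n\xi_n\otimes h(y_n)$, whose norm sum is bounded by $\norm{h}\sum_n\norm{\xi_n}\norm{y_n}$.
\end{itemize}
In both cases we must check pointwise that the series evaluates to the composite. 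This follows because $h$ is bounded and linear, so it commutes with convergent series in $N$.

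For i), let $\iota_N\colon N\to N^{\vee\vee}$ be the canonical morphism, which satisfies $\norm{\iota_N(y)}\le\norm{y}$. We propose the representing tensor $\sum_n \iota_N(y_n)\otimes\xi_n\in N^{\vee\vee}\wot M^\vee$. For $\eta\in N^\vee$ and $x\in M$ we have
\[
f^\vee(\eta)(x)=\eta(f(x))=\sum_n\langle\xi_n,x\rangle\,\eta(y_n),
\]
where the series converges absolutely. Hence $f^\vee(\eta)=\sum_n\eta(y_n)\,\xi_n$, and this series converges in the norm of $M^\vee$ because $\sum_n\norm{\eta}\norm{y_n}\norm{\xi_n}<\infty$. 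The norm sum of the proposed tensor is at most $\sum_n\norm{y_n}\norm{\xi_n}$, so $f^\vee$ is nuclear.

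For iii), let $f_1$ be represented by $\sum_n\xi_n\otimes y_n$ and $f_2$ by $\sum_m\zeta_m\otimes z_m$.
\begin{itemize}
\item[-] \textbf{Functional.} For each pair $(n,m)$ we form the functional $\xi_n\wot\zeta_m\colon M_1\wot M_2\to R\wot R\cong R$. By functoriality of the projective tensor norm, its operator norm is at most $\norm{\xi_n}\norm{\zeta_m}$.
\item[-] \textbf{Tensor.} We take the double series $\sum_{n,m}(\xi_n\wot\zeta_m)\otimes(y_n\otimes z_m)$. Its norm sum is bounded by the product of the two absolutely convergent sums.
\item[-] \textbf{Agreement.} On simple tensors $x_1\otimes x_2$ it agrees with $f_1(x_1)\otimes f_2(x_2)$. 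We extend by linearity, and then by density of the algebraic tensor product together with boundedness of both sides.
\end{itemize}

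The step that needs the most care is the last one, indexing the double series by $\N\times\N$ via a bijection with $\N$. Since the norm sum converges absolutely, rearrangement is harmless. In the non-archimedean case the terms $\norm{\xi_n}\norm{\zeta_m}\norm{y_n}\norm{z_m}$ tend to zero along the cofinite filter on $\N\times\N$, because both factor sequences are bounded and tend to zero.
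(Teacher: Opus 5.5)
Your proof is correct, but it takes a different route from the paper, which does no computation and instead cites Higgs--Rowe. There, a nuclear morphism in an arbitrary closed symmetric monoidal category is one admitting a ``name'' $\hat f\colon R\to M^\vee\wot N$, and the three stability properties are formal diagram chases. For composition one post-composes $\hat f$ with $g^\vee\wot h$. For duals one post-composes with the symmetry followed by $\iota_N\wot\id_{M^\vee}$. For tensor products one takes $\hat f_1\wot\hat f_2$, reorders factors, and applies the canonical map $M_1^\vee\wot M_2^\vee\to(M_1\wot M_2)^\vee$. Your tensors $\sum_n(\xi_n\circ g)\otimes y_n$, $\sum_n\xi_n\otimes h(y_n)$, $\sum_n\iota_N(y_n)\otimes\xi_n$ and $\sum_{n,m}(\xi_n\wot\zeta_m)\otimes(y_n\otimes z_m)$ are exactly the elementwise images of $\sum_n\xi_n\otimes y_n$ under these maps. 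The categorical route needs no analysis: naturality replaces your convergence estimates, the rearrangement of the double series, and the density argument. It also applies verbatim in $\Banc_R$, in the non-archimedean categories and in $\Ind(\Ban_R)$. Your route is self-contained; it uses only the Banach-specific fact, implicit in the paper's ``concretely'' description, that every element of $M^\vee\wot N$ is an absolutely summable series of simple tensors. In exchange it gives quantitative bounds that the citation leaves implicit. Writing $\nu(f)=\inf\sum_n\norm{\xi_n}\norm{y_n}$ over representations of $f$, you get $\nu(h\circ f\circ g)\le\norm{h}\,\nu(f)\,\norm{g}$, $\nu(f^\vee)\le\nu(f)$ and $\nu(f_1\wot f_2)\le\nu(f_1)\,\nu(f_2)$, with the supremum analogues in the non-archimedean case.
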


\begin{proof}
The above properties are proved in \cite[Proposition 2.2 and 2.3]{nucle} in greater generality. 
\end{proof}

Now we investigate nuclearity for morphisms between modules of the form $\ell^1_*(\rho)$ where $\rho:\N \to \Rp$ is a sequence of strictly positive real numbers. Suppose $\sigma:\N \to \Rp$ is another sequence. The set of morphisms 
\[
\ell^1_*(\rho) \to \ell^1_*(\sigma)
\]
is the underlying set of the Banach module $\ihom_R\big(\ell^1_*(\rho),\ell^1_*(\sigma)\big)$. We can compute the internal-hom in $\Banc_R$ using the universal properties of contracting coproducts in $\Banc_R$ (and $\Bannac_R$):
\begin{equation}\label{eq:matrix_ell1}
\begin{split}
\ihom_R\big(  \ell^1_*(\rho), \ell^1_*(\sigma)  \big)
&\cong
\prodc_{i\in\N}\ihom_R\big(  [Re_i]_{\rho(i)}, \ell^1_*(\sigma)  \big)
\\
&\cong
\prodc_{i\in \N} \Big( 
[R\,\check{e}_i]_{\rho(i)^{-1}}\wot \coprodc_{j\in\N} [R\,e_j]_{\sigma(j)}
\Big)
\\
&\cong
\prodc_{i\in\N} \coprodc_{j\in\N} 
[R\,\check{e}_i\otimes e_j]_{\frac{\sigma(j)}{\rho(i)}}
\end{split}
\end{equation}
Here $\check{e}_i$ denotes the projection $\ell^1_*(-) \to R$ to the $i$-th component and $\check{e}_i\otimes e_j$ corresponds to the unique morphism $R\,e_i \to R\,e_j$ sending $e_i$ to $e_j$. In \eqref{eq:matrix_ell1} we are identifying a morphism ${f:\ell^1_*(\rho) \to \ell^1_*(\sigma)}$ with a matrix
\[
\big(a_{ij} \big)_{i,j\in\N},\quad a_{ij}:=\langle \check{e}_j, f(e_i)\rangle,
\]
satisfying the condition
\begin{equation*}
\sup_{i\in\N} \sum_{j\in\N} \abs{a_{ij}}\frac{\sigma(j)}{\rho(i)} <\infty
\end{equation*}
or, in the non-archimedean case, the conditions
\begin{align*}
    &\lim_{j\to\infty} \abs{a_{ij}}\sigma(j)=0 
    \quad
    \text{for all $i\in\N$, and} \\
    &\sup_{i\in\N}\,\sup_{j\in\N}\,\abs{a_{ij}} \frac{\sigma(j)}{\rho(i)} <\infty.
\end{align*}
For the tensor product we have
\begin{equation}\label{eq:nuclear_matrix_ell1}
\begin{split}
\ell^1_*(\rho)^\vee \wot\, \ell^1_*(\sigma) 
&\cong
\coprodc_{j\in\N} \ell^1_*(\rho)^\vee \wot [R\,e_j]_{\sigma(j)}
\\
&\cong
\coprodc_{j\in\N} \Big( 
\prodc_{i\in\N} [R\,\check{e}_i]_{\rho(i)^{-1}}
\Big)
\wot [R\,e_j]_{\sigma(j)}
\\
&\cong
\coprodc_{j\in\N} \prodc_{i\in\N} 
[R\,\check{e}_i\otimes e_j]_{\frac{\sigma(j)}{\rho(i)}}.
\end{split}
\end{equation}
With \eqref{eq:matrix_ell1} and \eqref{eq:nuclear_matrix_ell1} in mind we observe that nuclear morphisms correspond to those matrices
\[
\big(a_{ij} \big)_{i,j\in\N},\quad a_{ij}\in R
\]
such that 
\[
\sup_{i\in\N} \frac{\abs{a_{ij}}}{\rho(i)}<\infty
\]
for all $j\in\N$ and
\begin{align*}
&\sum_{j=0}^\infty \sup_{i\in\N} \abs{a_{ij}}\frac{\sigma(j)}{\rho(i)} <\infty,
\quad 
\text{or}
\\
& \lim_{j\to\infty} \sup_{i\in\N} \abs{a_{ij}}\frac{\sigma(j)}{\rho(i)} = 0
\quad
\text{in the non-archimedean case.}
\end{align*}

We are mainly interested in the case of the identity matrix, in other words we seek the conditions that $\rho,\sigma$ must satisfy in order to have a well-defined nuclear morphism
\[
\pi_{\sigma\rho}:\ell^1_*(\rho) \to \ell^1_*(\sigma)
\]
such that $\pi_{\sigma\rho}(e_n)=e_n$ for all $n\in\N$.

\begin{prop}\label{pr:nuclear_morph_ell}
Let $\rho,\sigma:\N \to \Rp$ be two sequences of strictly positive real numbers. Suppose that 
\[
\sup_{n\in\N}\frac{\sigma(n)}{\rho(n)}<\infty,
\]
then 
\[
\pi_{\sigma\rho}: \ell^1_*(\rho) \to \ell^1_*(\sigma),
\quad e_n \mapsto e_n
\]
is well-defined and it is nuclear if and only if
\begin{gather*}
\sum_{n=0}^\infty \frac{\sigma(n)}{\rho(n)}<\infty
\\
\Big(\, \lim_{n\to\infty} \frac{\sigma(n)}{\rho(n)} = 0
\quad \textrm{in the non-archimedean case.}\, \Big)
\end{gather*}
Moreover, when $\pi_{\sigma\rho}$ is nuclear, it extends to a well-defined nuclear morphism
\begin{equation*}
    \tau_{\sigma\rho}: \ell^\infty(\rho) \to \ell^1_*(\sigma).
\end{equation*}
\end{prop}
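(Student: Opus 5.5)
The plan is to work entirely with the matrix descriptions \eqref{eq:matrix_ell1} and \eqref{eq:nuclear_matrix_ell1}, specialised to the diagonal matrix $a_{ij}=\delta_{ij}$.

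\textbf{Well-definedness.} With $a_{ij}=\delta_{ij}$, the condition in \eqref{eq:matrix_ell1} reads
\[
\sup_i \frac{\sigma(i)}{\rho(i)}<\infty.
\]
This is exactly the hypothesis. In the non-archimedean case we also need $\lim_j \abs{a_{ij}}\sigma(j)=0$ for each fixed $i$, which holds trivially because each row has only one nonzero entry. So $\pi_{\sigma\rho}$ is a bounded morphism with $\norm{\pi_{\sigma\rho}}\le\sup_n \sigma(n)/\rho(n)$.

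\textbf{Nuclearity criterion.} Plugging $a_{ij}=\delta_{ij}$ into the description of nuclear matrices:
\begin{itemize}
\item the column condition $\sup_i \abs{a_{ij}}/\rho(i)=1/\rho(j)<\infty$ is automatic;
\item $\sup_i \abs{a_{ij}}\sigma(j)/\rho(i)=\abs{1}\,\sigma(j)/\rho(j)$;
\item so nuclearity becomes $\sum_j \sigma(j)/\rho(j)<\infty$, or $\sigma(j)/\rho(j)\to0$ in the non-archimedean case.
\end{itemize}
The factor $\abs{1}$ is harmless: if $\abs{1_R}\neq 0$ then $R\neq0$, and sub-multiplicativity forces $\abs{1}=1$.

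The one point needing care is the ``only if'' direction. Knowing $\pi_{\sigma\rho}$ is nuclear gives some tensor in $\ell^1_*(\rho)^\vee\wot\ell^1_*(\sigma)$ representing it, and I have to know this tensor is the diagonal matrix. For this I would check that the canonical map \eqref{eq:canonical_morphism_nuclearity} is injective here. Under the identifications above it is just the inclusion
\[
\coprodc_j\prodc_i \;\longrightarrow\; \prodc_i\coprodc_j
\]
of matrices with the same entries. A matrix is determined by its entries $\langle\check e_j,f(e_i)\rangle$, so any representing tensor has entries $\delta_{ij}$ and lies in $\coprodc_j\prodc_i$; the norm condition then follows. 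I expect this identification to be the main point needing care.

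\textbf{Extension to $\ell^\infty(\rho)$.} Assume $\sum_n\sigma(n)/\rho(n)<\infty$ (resp. $\sigma(n)/\rho(n)\to0$). For $x=(x_n)\in\ell^\infty(\rho)$ we have
\[
\sum_n \abs{x_n}\sigma(n)\le \norm{x}_{\ell^\infty(\rho)}\sum_n\frac{\sigma(n)}{\rho(n)},
\]
and in the non-archimedean case $\abs{x_n}\sigma(n)\le \norm{x}\,\sigma(n)/\rho(n)\to0$. Hence $\tau_{\sigma\rho}(x):=(x_n)_n$ lies in $\ell^1_*(\sigma)$ and is bounded.

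For nuclearity, observe that $\check e_n\in\ell^\infty(\rho)^\vee$ with $\norm{\check e_n}\le\rho(n)^{-1}$, since $\abs{x_n}\rho(n)\le\norm{x}$. The series
\[
\sum_n \check e_n\otimes e_n
\]
then has $\sum_n\norm{\check e_n}\norm{e_n}\le\sum_n\sigma(n)/\rho(n)<\infty$ (resp. terms tending to $0$). It therefore converges in $\ell^\infty(\rho)^\vee\wot\ell^1_*(\sigma)$, and evaluating at $x$ gives $\sum_n x_n e_n=\tau_{\sigma\rho}(x)$, with convergence in $\ell^1_*(\sigma)$ by the estimate above. So $\tau_{\sigma\rho}$ is nuclear. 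It extends $\pi_{\sigma\rho}$ because $\ell^1_*(\rho)\to\ell^\infty(\rho)$ is the contracting inclusion and the two maps agree on each $e_n$.
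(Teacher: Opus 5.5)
Your proof is correct and follows the paper's route. You specialise the matrix descriptions \eqref{eq:matrix_ell1} and \eqref{eq:nuclear_matrix_ell1} to the diagonal matrix $\delta_{ij}$, and you obtain $\tau_{\sigma\rho}$ from the tensor $\sum_n \check e_n\otimes e_n\in\ell^\infty(\rho)^\vee\wot\ell^1_*(\sigma)$ using $\norm{\check e_n}\le\rho(n)^{-1}$. The differences are minor. You explicitly justify the ``only if'' direction through injectivity of the canonical map, which the paper leaves implicit. You also check $\tau_{\sigma\rho}\circ c_\rho=\pi_{\sigma\rho}$ directly on elements, where the paper reads it off the commutative square \eqref{eq:nucl_comm_square_for_c_rho}.
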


\begin{proof}
Consider the matrix
\[
\delta:=\big( \delta_{ij} \big)_{i,j\in\N},
\]
where
\[
\delta_{ij}=\begin{cases}
1 & \text{if $i=j$;} \\
0 & \text{otherwise.}
\end{cases}
\]
Under the isomorphism in \eqref{eq:matrix_ell1}, $\delta$ induces a well-defined morphism $\pi_{\sigma\rho}:\ell^1(\rho)\to\ell^1(\sigma)$ by the condition
\[
\sup_{i\in\N}\sum_{j\in\N}\abs{\delta_{ij}}\frac{\sigma(j)}{\rho(i)}
=\sup_{i\in\N} \frac{\sigma(i)}{\rho(i)}<\infty.
\]
The non-archimedean case is analogous. Moreover, from \eqref{eq:nuclear_matrix_ell1} we see that $\pi_{\sigma\rho}$ is nuclear if and only if its matrix $\delta$ satisfies
\[
\sum_{j=0}^\infty \sup_{i\in\N}\abs{\delta_{ij}} \frac{\sigma(j)}{\rho(i)}<\infty.
\]
but the above quantity is precisely
\[
\sum_{j=0}^\infty \frac{\sigma(j)}{\rho(j)}.
\]
The non-archimedean case is analogous. 

Suppose $\pi_{\sigma\rho}$ is nuclear, then it is represented by the tensor
\[
\sum_{n=0}^\infty \check{e}_n\otimes e_n
\]
under the natural morphism
\[
\ell^1_*(\rho)^\vee\wot\ell^1_*(\sigma)
\to
\ihom_R\big(\, \ell^1_*(\rho),\,\ell^1_*(\sigma) \,\big).
\]
There is a canonical morphism from the coproduct to the product in every category with zero object. In particular, there is a canonical contracting morphism
\[
c_\rho: \ell^1_*(\rho) \to \ell^\infty(\rho).
\]
It is the identity on elements and expresses the fact that summable sequences with respect to the weight $\rho$ are bounded with respect the same weight. Applying the natural morphism 
\[
(-)^\vee \wot \ell^1_*(\sigma) \to \ihom_R\big(-,\ell^1_*(\sigma)\big)
\]
to $c_\rho$ yields the commutative square
\begin{equation}\label{eq:nucl_comm_square_for_c_rho}
\begin{tikzcd}
\ihom_R\big( \ell^\infty(\rho),\,\ell^1_*(\sigma) \big)
\ar[r,"-\circ c_\rho"]
&
\ihom_R\big( \ell^1_*(\rho),\,\ell^1_*(\sigma) \big)
\\
\ell^\infty(\rho)^\vee \wot \ell^1_*(\sigma)
\ar[u] \ar[r,"c_\rho^\vee\wot\id_{\ell^1_*(\sigma)}"]
&
\ell^1_*(\rho)^\vee \wot \ell^1_*(\sigma)
\ar[u]
\end{tikzcd}
\end{equation}
If we denote the projection $\ell^\infty(-) \to R$ to the $n$-th component by $\check{e}'_n$, then 
\[
\check{e}_n=c_\rho^\vee(\check{e}'_n).
\]
We have also that 
\[
\norm{\check{e}_n}_{\ell^1(\rho)^\vee}
=\rho(n)^{-1}=
\norm{\check{e}'_n}_{\ell^\infty(\rho)^\vee}.
\]
Therefore, the tensor
\[
\sum_{n=0} \check{e}'_n\otimes e_n
\]
is a well-defined element of $\ell^\infty(\rho)^\vee \wot \ell^1_*(\sigma)$, and 
\[
c_\rho^\vee\wot\id_{\ell^1_*(\sigma)}\Big(
\sum_{n=0} \check{e}'_n\otimes e_n
\Big)
=
\sum_{n=0}^\infty \check{e}_n\otimes e_n.
\]
If $\tau_{\sigma\rho}$ denotes the nuclear morphism corresponding to $\sum_{n=0}^\infty \check{e}'_n\otimes e_n$, we must have 
\[
\tau_{\sigma\rho}\circ c_\rho=\pi_{\sigma\rho}
\]
from the commutativity of the square \eqref{eq:nucl_comm_square_for_c_rho}.
\end{proof}

The morphism $\tau_{\sigma\rho}:\ell^\infty(\rho)\to\ell^1_*(\sigma)$ is one of the key to the reflexivity results of the next section because it allows us to exchange weighted $\ell^1_*$ modules with weighted $\ell^\infty$ modules inside certain categorical limits and colimits.

Now we introduce the nuclearity property for ind-Banach modules as in \cite{fremod}.

\begin{prop}\label{pr:equivalent_defs_of_nuclear_ind_ban}
Let $M$ be an object of $\Ind(\Ban_R)$. The following are equivalent:
\begin{enumerate}[\rm i)]
    \item the natural morphism 
    \[
     B^\vee \wot M \to \ihom_R(B,M)
    \]
    is an isomorphism for every Banach $R$-module $B$;
    \item every filtered inductive system 
    \[
    I \longrightarrow \Ban_R,
    \quad (i\xrightarrow{\alpha} j)
    \longmapsto \big(M_i\xrightarrow{M(\alpha)} M_j\big)
    \]
    representing $M$ has the property that for every $i\in I$ there is a morphism $\alpha:i\to j$ in $I$ such that 
    \[
    M(\alpha):M_i \to M_j
    \]
    is nuclear;
    \item 
    there exists a filtered inductive system 
    \[
    I \longrightarrow \Ban_R,
    \quad (i\xrightarrow{\alpha} j)
    \longmapsto \big(M_i\xrightarrow{M(\alpha)} M_j\big)
    \]
    such that 
    \[
    M\cong\indlim_{i\in I} M_i
    \]
    and for every $i\in I$ there is a morphism $\alpha:i\to j$ in $I$ such that 
    \[
    M(\alpha):M_i \to M_j
    \]
    is nuclear.
\end{enumerate}
\end{prop}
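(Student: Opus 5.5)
The plan is to prove the implications ii) $\Rightarrow$ iii) $\Rightarrow$ i) $\Rightarrow$ ii). The first is trivial, since every ind-Banach module admits at least one filtered presentation by Banach modules.

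For iii) $\Rightarrow$ i), fix $B\in\Ban_R$ and a presentation $M\cong\indlim_{i\in I} M_i$ as in iii). Since $B$ is Banach, the formulas for $\wot$ and $\ihom_R$ in $\Ind(\Ban_R)$ give
\[
B^\vee\wot M\cong\indlim_{i\in I} B^\vee\wot M_i,
\qquad
\ihom_R(B,M)\cong\indlim_{i\in I}\ihom_R(B,M_i).
\]
Under these identifications the canonical morphism \eqref{eq:canonical_morphism_nuclearity} is the ind-limit of the levelwise maps $\phi_i:B^\vee\wot M_i\to\ihom_R(B,M_i)$. I would construct an inverse directly at the level of ind-systems. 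For each $i$, choose $\alpha:i\to j$ with $M(\alpha)$ nuclear, represented by a tensor $t=\sum_n \xi_n\otimes y_n\in M_i^\vee\wot M_j$. Define
\[
\psi_{i}:\ihom_R(B,M_i)\to B^\vee\wot M_j,
\qquad f\mapsto \sum_n (\xi_n\circ f)\otimes y_n .
\]
This is bounded with norm at most $\sum_n\norm{\xi_n}\norm{y_n}$ (or $\max$ in the non-archimedean case); more intrinsically, it is the composite of $f\mapsto f^\vee$ with the map $M_i^\vee\wot M_j\to\ihom_R(\ihom_R(B,M_i),B^\vee\wot M_j)$ induced by $t$. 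One then checks two identities:
\[
\phi_j\circ\psi_i=\ihom_R(B,M(\alpha)),
\qquad
\psi_i\circ\phi_i=\id_{B^\vee}\wot M(\alpha).
\]
Both follow from $M(\alpha)(x)=\sum_n\langle\xi_n,x\rangle y_n$, checked on simple tensors and extended by density and continuity. Hence the $\psi_i$ define a morphism of ind-objects inverse to $\varinjlim\phi_i$. The main bookkeeping point is that the choice of $j$ depends on $i$ and the $\psi_i$ need not be strictly compatible. Compatibility only holds after composing further along transition maps, so I would argue with the Hom formula $\varprojlim_i\varinjlim_j$, where equality after a further transition is exactly what is needed.

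For i) $\Rightarrow$ ii), take any presentation $M\cong\indlim_{i\in I} M_i$ and fix $i\in I$. Apply i) with $B=M_i$:
\[
\indlim_{j} M_i^\vee\wot M_j\;\xrightarrow{\ \sim\ }\;\indlim_j\ihom_R(M_i,M_j).
\]
The canonical map $\alpha_i:M_i\to M$ gives a morphism $R\to\ihom_R(M_i,M)$, that is, an element of $\Hom_R(R,\ihom_R(M_i,M))$. Since $R$ is Banach, this set equals $\varinjlim_j\Hom_R(R,\ihom_R(M_i,M_j))=\varinjlim_j\Hom_R(M_i,M_j)$, and the element is represented by the identity $\id_{M_i}$ at $j=i$. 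Pulling back through the isomorphism, the element comes from some $\varinjlim_j\Hom_R(R,M_i^\vee\wot M_j)$: there is $j$ and a tensor $t\in M_i^\vee\wot M_j$ whose image in $\ihom_R(M_i,M_j)$ agrees with $\id_{M_i}$ after passing to some $k\geq j$. That is, $M(i\to k)$ equals the image of a tensor in $M_i^\vee\wot M_k$, so it is nuclear.

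I expect the main obstacle to be the step in iii) $\Rightarrow$ i) of checking that the non-strictly compatible family $\psi_i$ really assembles into a two-sided inverse in $\Ind(\Ban_R)$. The alternative is to reindex so that every transition map is nuclear. I would try to handle this with the stability of nuclear morphisms under composition (part ii of the preceding proposition), which ensures that composing further along the system preserves nuclearity and lets the identities above hold on a cofinal subsystem.
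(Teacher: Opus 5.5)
Your proposal is correct and follows the same scheme as the paper: ii)$\Rightarrow$iii) is trivial, and i)$\Rightarrow$ii) applies i) with $B=M_i$ and lifts the class of $\id_{M_i}$ through the isomorphism. For iii)$\Rightarrow$i) the paper only cites \cite[Lemma 4.15]{fremod}, whereas you prove it directly with the maps $\psi_i$. The compatibility issue you flag is harmless and needs no reindexing. The identities $\phi_j\circ\psi_i=\ihom_R(B,M(\alpha))$ and $\psi_i\circ\phi_i=\id_{B^\vee}\wot M(\alpha)$, together with naturality of the $\phi_i$, give surjectivity and injectivity of $\Hom_R(C,-)$ applied to the canonical morphism for every Banach $C$. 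That suffices, because morphisms in $\Ind(\Ban_R)$ are natural transformations of presheaves on $\Ban_R$.
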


\begin{proof}
If i) is true then ii) follows by taking $B=M_i$ and by observing that the surjectivity of the morphism 
\[B^\vee \wot M\to\ihom_R(B,M)\] 
is equivalent to the fact that for every morphism
\[
f:B \to M_i
\]
there is an $\alpha:i\to j$ such that 
\[
M(\alpha)\circ f : B \to M_j
\]
is representable by a tensor in $B^\vee\wot M_j$. See \cite[Lemma 4.14 and 4.16]{fremod} for the details. 
The implication $\mathrm{ii})\Rightarrow \mathrm{iii})$ is obvious and iii) implies i) by \cite[Lemma 4.15]{fremod}.
\end{proof}

\begin{definition}
An object $M\in\Ind(\Ban_R)$ is said \emph{nuclear} if it satisfies one of the three equivalent conditions of Proposition \ref{pr:equivalent_defs_of_nuclear_ind_ban}. A filtered inductive system $I\to\Ban_R$ like in ii) or iii) of Proposition \ref{pr:equivalent_defs_of_nuclear_ind_ban} is said to have \emph{nuclear transition morphisms}.
\end{definition}

\begin{prop}\label{pr:nuclears_are_flat}
Let $M\in\Ind(\Ban_R)$. Then the following are equivalent: 
\begin{enumerate}[\rm i)]
\item $M$ is nuclear;
\item there is a filtered category $I$, a family $\{\rho_i:i\in I\}$ of sequences $\rho_i:\N\to\Rp$ and an inductive system
\[
I \longrightarrow \Ban_R,
\quad 
(i\xrightarrow{\alpha} j)
\longmapsto 
\big(
\ell^1_*(\rho_i)
\xrightarrow{M(\alpha)} 
\ell^1_*(\rho_j)
\big)
\]
with nuclear transition morphisms such that 
\[
M=\indlim_{i\in I} \ell^1_*(\rho_i);
\]
\item there is a filtered category $I$, a family $\{\rho_i:i\in I\}$ of sequences $\rho_i:\N\to\Rp$ and an inductive system
\[
I \longrightarrow \Ban_R,
\quad 
(i\xrightarrow{\alpha} j)
\longmapsto 
\big(
\ell^\infty(\rho_i)
\xrightarrow{M(\alpha)} 
\ell^\infty(\rho_j)
\big)
\]
with nuclear transition morphisms such that 
\[
M=\indlim_{i\in I} \ell^\infty(\rho_i).
\]
\end{enumerate}
In particular, any nuclear ind-Banach module is flat.
\end{prop}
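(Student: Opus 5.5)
We prove i)$\Rightarrow$ii)$\Rightarrow$i), then i)$\Rightarrow$iii)$\Rightarrow$i), and finally deduce flatness from ii).

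For ii)$\Rightarrow$i) and iii)$\Rightarrow$i) there is nothing to do: in both cases the given system represents $M$ and has nuclear transition morphisms, so condition iii) of Proposition \ref{pr:equivalent_defs_of_nuclear_ind_ban} holds.

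For i)$\Rightarrow$ii), write $M=\indlim_{i\in I} M_i$ with nuclear transition morphisms. The plan is to factor each nuclear morphism through a weighted $\ell^1_*$ and re-index.

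\begin{enumerate}[1.]
\item Take a nuclear $f=M(\alpha):M_i\to M_j$, represented by $\sum_n \xi_n\otimes y_n$ with $\sum_n\norm{\xi_n}\norm{y_n}<\infty$ (or tending to $0$ in the non-archimedean case).
\item Choose a sequence $\lambda_n\to\infty$ slowly enough that $\sum_n \lambda_n\norm{\xi_n}\norm{y_n}<\infty$ still holds (resp. $\lambda_n\norm{\xi_n}\norm{y_n}\to 0$). Discard the indices where $\xi_n$ or $y_n$ vanishes.
\item Factor $f$ as
\[
M_i\xrightarrow{\;a\;}\ell^1_*(\rho)\xrightarrow{\;\pi\;}\ell^1_*(\sigma)\xrightarrow{\;b\;}M_j,
\]
where $a(x)=(\langle\xi_n,x\rangle)_n$, $b(e_n)=y_n$, $\rho(n)=\norm{\xi_n}^{-1}\lambda_n^{-1}$ and $\sigma(n)=\norm{y_n}$.
\item Check boundedness. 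The map $a$ is bounded because $\sum_n\abs{\langle\xi_n,x\rangle}\rho(n)\le\norm{x}\sum_n\lambda_n^{-1}$, which requires $\sum_n\lambda_n^{-1}<\infty$; in the non-archimedean case $\lambda_n^{-1}\to0$ suffices. The map $b$ is contracting.
\item The middle map $\pi$ is $\pi_{\sigma\rho}$, and $\sigma(n)/\rho(n)=\lambda_n\norm{\xi_n}\norm{y_n}$ is summable. Hence $\pi$ is nuclear by Proposition \ref{pr:nuclear_morph_ell}.
\end{enumerate}

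The two growth requirements on $\lambda_n$ (slow enough for step 2, fast enough that $\sum_n\lambda_n^{-1}<\infty$ in step 4) may conflict. Two fixes are available:
\begin{itemize}
\item split the weight as $\sqrt{\lambda_n}$ on each side;
\item more robustly, factor $f$ as a composite of two nuclear maps via the standard trick $t_n=\norm{\xi_n}\norm{y_n}=\sqrt{t_n}\cdot\sqrt{t_n}$. Then both $a:M_i\to\ell^1_*(\rho)$ and $\pi$ are bounded, with $\rho(n)=\sqrt{t_n}/\norm{\xi_n}\cdot(\text{correction})$.
\end{itemize}
Since this estimate is fiddly, I would instead apply Proposition \ref{pr:nuclear_morph_ell} only to the middle map and use that composites with nuclear maps stay nuclear. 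For this I need each transition to be a composite of two nuclear maps, which is arranged by passing to a cofinal chain where each transition is a composite of two consecutive nuclear transitions.

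Having the factorizations, build the new system. Its objects are the $\ell^1_*(\rho)$ attached to arrows $\alpha$ of $I$. Its transitions are the composites $a'\circ M(\beta)\circ b$ between consecutive factorizations; these are nuclear because they contain a nuclear factor. The new system is interleaved with the old one, so it has the same ind-colimit $M$. The main obstacle is making this interleaving filtered and functorial when $I$ is an arbitrary filtered category. I would first reduce to $I$ directed, via a cofinal directed poset, and treat the system as a diagram of pairs $(i,\alpha)$. The interleaving then gives mutually inverse maps of ind-objects.

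For i)$\Rightarrow$iii), run the same construction but pass through $\ell^\infty$. The map $c_\rho:\ell^1_*(\rho)\to\ell^\infty(\rho)$ followed by the nuclear map $\tau_{\sigma\rho}$ of Proposition \ref{pr:nuclear_morph_ell} lets us interleave $\ell^1_*$ and $\ell^\infty$ systems: $\ell^1_*(\rho)\to\ell^\infty(\rho)\to\ell^1_*(\sigma)\to\ell^\infty(\sigma)$. The resulting $\ell^\infty$ system has nuclear transitions, and it is cofinally equivalent to the $\ell^1_*$ one, so it also has colimit $M$.

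Finally, flatness. By ii), $M$ is a filtered colimit of the flat objects $\ell^1_*(\rho_i)$, which are flat by Lemma \ref{lem:ell1_is_flat_proj}. In $\Ind(\Ban_R)$, filtered colimits commute with finite limits, and $\wot$ commutes with filtered colimits. Hence $M\wot-$ commutes with finite limits, so $M$ is flat.
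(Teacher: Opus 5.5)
Your arguments for ii)$\Rightarrow$i), iii)$\Rightarrow$i) and for flatness are the same as the paper's. The paper does not prove i)$\Rightarrow$ii) and i)$\Rightarrow$iii) itself; it cites \cite[Lemmas 4.19 and 4.20]{fremod}. Your direct proof of these two implications has a genuine gap.

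\textbf{The factorization in steps 2--5 fails in general.} In the archimedean case:
\begin{itemize}
\item your bound for $a$ needs $\sum_n\lambda_n^{-1}<\infty$, and this is sharp already for $M_i=R=\R$;
\item nuclearity of $\pi$ needs $\sum_n\lambda_n t_n<\infty$, where $t_n=\norm{\xi_n}\norm{y_n}$;
\item by Cauchy--Schwarz these two together force $\sum_n\sqrt{t_n}<\infty$, which fails for instance when $t_n=n^{-2}$.
\end{itemize}
Rescaling the weights (splitting $\sqrt{\lambda_n}$, or the ``$\sqrt{t_n}$ trick with a correction'') leaves $t_n$ unchanged and hits the same bound. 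The last suggestion is too imprecise to check. In the non-archimedean case your choice does work, e.g.\ with $\lambda_n=t_n^{-1/2}$.

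\textbf{The i)$\Rightarrow$iii) paragraph has a related problem.} $\tau_{\sigma\rho}$ exists only between weights with summable ratio and acts as the identity on the basis. The transitions of the system you built for ii) are arbitrary nuclear maps, so $\tau_{\sigma\rho}$ cannot be inserted between them.

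\textbf{The interleaving is only sketched.} The transitions $a'\circ M(\beta)\circ b$ as written contain no nuclear factor. Functoriality over $I$ is left open.

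\textbf{The missing idea: you never need a nuclear map between two $\ell^1_*$'s.} Discard vanishing terms and set $\rho(n)=\norm{\xi_n}^{-1}$ and $\sigma(n)=\norm{y_n}$. Then every nuclear $f$ factors as
\[
M_i\xrightarrow{\;a\;}\ell^\infty(\rho)\xrightarrow{\;\tau_{\sigma\rho}\;}\ell^1_*(\sigma)\xrightarrow{\;b\;}M_j,
\qquad a(x)=(\langle\xi_n,x\rangle)_n,\quad b(e_n)=y_n .
\]
Here $a$ and $b$ are contracting, and $\tau_{\sigma\rho}$ is nuclear by Proposition \ref{pr:nuclear_morph_ell} because $\sigma(n)/\rho(n)=t_n$. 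So $f$ factors through an $\ell^1_*(\sigma)$ with nuclear first leg, and through an $\ell^\infty(\rho)$ with nuclear second leg.

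\textbf{Re-index in the slice category rather than over arrows of $I$.}
\begin{itemize}
\item By Proposition \ref{pr:equivalent_defs_of_nuclear_ind_ban} ii), applied to the canonical system $\Ban_R/M$, every $(B,h)$ maps by a nuclear morphism to some $(B',h')$.
\item Factoring that morphism as above shows that the full subcategory of pairs $(\ell^1_*(\sigma),g)$ is cofinal. Hence it is filtered with colimit $M$.
\item Applied to an object $(\ell^1_*(\sigma),g)$ itself, the same factorization gives a morphism to another such pair whose underlying map has the form $\tau\circ a$, hence is nuclear.
\item ``Nuclear transition morphisms'' only requires one such outgoing morphism per object, so this proves ii).
\end{itemize}
The analogous argument with the $\ell^\infty(\rho)$-factorization proves iii): take two consecutive nuclear steps and keep the $a$-leg of the second.
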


\begin{proof}
Given ii) or iii) we have i) by Proposition \ref{pr:equivalent_defs_of_nuclear_ind_ban}. The implication i)$\Rightarrow$ii) is proved in \cite[Lemma 4.19]{fremod}. The implication i)$\Rightarrow$iii) is proved in \cite[Lemma 4.20]{fremod}. A nuclear ind-Banach module $M$ is isomorphic to a filtered colimit of flat Banach modules by ii). Then $M$ is flat because filtered colimits commute with finite limits in $\Ind(\Ban_R)$ (see \cite[Proposition 8.9.1]{sga}). 
\end{proof}

\subsection{Fréchet modules}
\begin{definition}
    A \emph{Fréchet} $R$-module is a bornological $R$-module $F$ which is the limit of a sequential projective system of Banach $R$-modules, that is 
    \[
    F=\varprojlim \;
    (\, \cdots\xrightarrow{\pi_{2,3}} F_2 \xrightarrow{\pi_{1,2}} F_1\xrightarrow{\pi_{0,1}} F_0 \,)
    \]
    with $\pi_{j,j+1}:F_{j+1}\to F_j$ morphism of Banach $R$-modules for every $j\in\N$. 
    The projection morphisms from the limit to the components are denoted by
    \[
    \pi_j: F \to F_j.
    \]
    for every $j\in\N.$
\end{definition}
\begin{remark}
A Fréchet module is equivalently a limit in $\Ind(\Ban_R)$ of a sequential projective system consisting of morphisms in $\Banc_R$. To see that, suppose 
\[
F=\varprojlim_{j\in\N} F_j
\]
for a projective system $\pi_{ij}:F_j\to F_i$. Define the new norm on $F_j$
\[
\norm{x}_j':=\max_{i=0,\dots, j} \norm{\pi_{ij}(x)}_{F_i}
\quad 
\text{for every }x\in F_j.
\]
It's clear that $\norm{x}_{F_j}\leq\norm{x}_j'$, but it also holds the other inequality
\[
\norm{x}_j'\leq C \norm{x}_{F_j}
\quad
\text{for all }x\in F_j,
\]
where $C$ is the maximum among the operator-norms $\norm{\pi_{ij}}$ for $i=0,\dots,j$.
Thus the new norm $\norm{-}_j'$ is equivalent to the original norm of $F_j$ for all $j\in\N$. Moreover, for all $i,j\in\N$ satisfying $i\leq j$ and all $x\in F_j$
\[\begin{split}
\norm{\pi_{ij}(x)}_i' 
&= \max_{k=0,\dots, i} \norm{\pi_{ki}(\pi_{ij}(x))}_{F_k} \\
&= \max_{k=0,\dots, i} \norm{\pi_{kj}(x)}_{F_k} \\
&\leq \max_{k=0,\dots, j} \norm{\pi_{kj}(x)}_{F_k} \\
&= \norm{x}_j'.
\end{split}\]
Hence, $\pi_{ij}$ becomes a morphism of $\Banc_R$ if $F_i$ has the norm $\norm{-}_i'$ and $F_j$ has the norm $\norm{-}_j'$.
\end{remark}

In view of the previous remark we will implicitly assume that a Fréchet module has contractive morphisms in the projective system. Note that by Proposition \ref{pr:born_is_concrete_reflexive_subcat} the embedding of $\Born_R$ in $\Ind(\Ban_R)$ commutes with limits. Thus, the notion of Fréchet modules remains the same whether they are considered in $\Born_R$ or in $\Ind(\Ban_R)$.

\subsection{Metrizable modules}
Now we recall the notion of metrizable modules from \cite[Section 5]{fremod}. For a bornological module, it formalizes the condition that every \emph{countable} family of Banach sub-modules is contained in some Banach sub-module. For our purposes, metrizable modules are useful because they behave nicely with tensor products of countable limits in $\Ind(\Ban_R)$. 

\begin{definition}
A category $I$ is said $\aleph_1$-filtered if any countable subset of objects of $I$ admits a cocone in $I$.
\end{definition}

When $I$ is just a partially ordered set, $I$ being $\aleph_1$-filtered means that every countable subset of $I$ has an upper bound in $I$. Just as filtered colimits of sets commutes with finite limits, $\aleph_1$-filtered colimits of sets commutes with countable limits. 

\begin{definition}
An object $M\in\Ind(\Ban_R)$ is said \emph{metrizable} if the slice category $\Ban_R/M$ is $\aleph_1$-filtered.
\end{definition}

\begin{prop}\label{pr:lim-wot-compatibility_for_metrizables}
Let $N\in\Ind(\Ban_R)$ be metrizable. If $I$ is a countable set and $\{M_i:i\in I\}$ is a countable family of objects in $\Ind(\Ban_R)$, then the natural morphism
\[
\Big( \prod_{i\in I} M_i \Big) \wot N
\to 
\prod_{i\in I}\Big( M_i\wot N \Big)
\]
is an isomorphism. Suppose $I$ is a countable category and
\[
I \to \Ind(\Ban_R),\quad i\mapsto M_i
\]
is a diagram of ind-Banach modules. If $N$ is also flat, then the natural morphism
\[
\big( \varprojlim_{i\in I} M_i \big) \wot N
\to 
\varprojlim_{i\in I} \big( M_i \wot N \big)
\]
is an isomorphism.
\end{prop}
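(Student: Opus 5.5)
The plan is to reduce both statements to commutations of filtered colimits with countable limits in the category of sets. Everything will be computed through the functors $\Hom_R(B,-)$ for $B\in\Ban_R$, which detect isomorphisms in $\Ind(\Ban_R)$.

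\emph{First statement.} Write $N=\indlim_{k\in K} N_k$, where $K=\Ban_R/N$ is $\aleph_1$-filtered. The tensor product commutes with filtered colimits in each variable, since it is computed componentwise on ind-systems. Hence the left-hand side is
\[
\indlim_{k\in K}\Big(\prod_{i\in I}M_i\Big)\wot N_k.
\]
Each $N_k$ is a Banach module, and we want $\big(\prod_i M_i\big)\wot N_k\cong\prod_i (M_i\wot N_k)$. This does not hold for an arbitrary Banach $N_k$. So first we replace $K$ by a cofinal $\aleph_1$-filtered system of objects that behave well. Using Proposition~\ref{pr:proj_resol_of_Banach} and Lemma~\ref{lem:ell1_is_flat_proj}, any morphism $B\to N$ factors through some $\ell^1_*(X,\rho)\to N$. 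I will check that the full subcategory of $\Ban_R/N$ on such objects is still cofinal and $\aleph_1$-filtered; countable cocones can be refined by precomposing with a projective cover. For $P=\ell^1_*(X,\rho)$, the identities \eqref{eq:interaction_contr_(co)prod_and_monoidal_str} identify $-\wot P$ with a weighted contracting coproduct. Whether this commutes with the ind-product is the delicate point.

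The cleaner route is to compute with $\Hom_R(B,-)$ for Banach $B$ and avoid these concrete identifications. A product in $\Ind(\Ban_R)$ satisfies $\Hom(B,\prod_i M_i)=\prod_i\Hom(B,M_i)$. For the tensor, I will use the description of $\Hom(B,M\wot N)$ as
\[
\varinjlim_{k}\Hom\big(B,M\wot N_k\big).
\]
The main obstacle is expressing $\Hom(B,M\wot N_k)$ when $M$ is itself an ind-object.

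So I will fix presentations $M_i=\indlim_{a\in A_i}M_{i,a}$. The infinite product in $\Ind(\Ban_R)$ is then the filtered colimit, over $\prod_i A_i$, of the Banach products of finite subfamilies extended by the ind-structure, and the tensor product is computed levelwise.

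The key set-theoretic input comes next. Since $K$ is $\aleph_1$-filtered, for any countable family of morphisms $B\to M_i\wot N$ the $N$-components factor through a single $N_k$. This gives
\[
\prod_{i\in I}\varinjlim_k\Hom(B,M_i\wot N_k)\cong\varinjlim_k\prod_{i\in I}\Hom(B,M_i\wot N_k),
\]
which is exactly the commutation of $\aleph_1$-filtered colimits with countable products in $\Set$. This yields bijectivity of
\[
\Hom\Big(B,\big(\prod_i M_i\big)\wot N\Big)\to\Hom\Big(B,\prod_i(M_i\wot N)\Big)
\]
for every Banach $B$, once one knows the levelwise statement $\big(\prod_i M_i\big)\wot N_k\to\prod_i(M_i\wot N_k)$ is compatible in the colimit. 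For that step I would rely on \cite[Section 5]{fremod}, where this comparison is established for metrizable objects, and cite it if the direct argument becomes too technical.

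\emph{Second statement.} A countable limit is the equalizer of two maps between countable products:
\[
\varprojlim_I M_i=\Ker\Big(\prod_{i\in\mathrm{Ob}\,I}M_i\rightrightarrows\prod_{\alpha:i\to j}M_j\Big),
\]
and both products are countable because $I$ is countable. Tensoring with $N$ commutes with the two countable products by the first part. It also commutes with the kernel, a finite limit, because $N$ is flat (Definition~\ref{def:flat_object}). Naturality of the comparison maps gives the claimed isomorphism. The main obstacle is therefore the first statement, specifically making the levelwise reduction rigorous through the $\aleph_1$-filtered presentation of $N$.
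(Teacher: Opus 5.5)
The paper gives no argument for this statement; it only cites \cite[Lemma 5.18]{fremod}. Your derivation of the second statement from the first is correct: write the countable limit as the kernel of the difference of two maps between countable products, use the first part for the products, use flatness of $N$ for the kernel, and conclude by naturality.

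The gap is in the first statement. Your $\Hom_R(B,-)$ computation correctly locates where metrizability enters: commuting the $\aleph_1$-filtered colimit over $\Ban_R/N$ with a countable product in $\Set$. But the argument only closes once you know the levelwise isomorphism $\big(\prod_i M_i\big)\wot N_k\cong\prod_i(M_i\wot N_k)$ for Banach $N_k$. You defer exactly this to \cite[Section 5]{fremod} ``for metrizable objects''. That is just the special case $N=N_k$ of the statement you are proving. Moreover, you asserted earlier that this isomorphism fails for arbitrary Banach $N_k$; if that were so, your argument could not close at all. The assertion is false.

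The detour it prompted is also flawed. A projective cover $\ell^1_*(X,\rho)\to B$ yields $\ell^1_*(X,\rho)\to B\to N$, not a factorization of $B\to N$ through an $\ell^1$-module. Such modules are not cofinal in $\Ban_R/N$ in general: for Banach $N$, cofinality would make $N$ a retract of an $\ell^1$-module. Your description of $\prod_i M_i$ through Banach products of finite subfamilies is also incorrect.

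The missing step needs no hypothesis on $N_k$ and uses only tools from the paper.

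\textbf{Banach case.} For Banach $X_i$, $i\in\N$, Proposition \ref{pr:prod_of_Banach_is_colimit} gives $\prod_i X_i\cong\indlim_{\varphi\in\Upsilon(\N)}\prodc_i[X_i]_{\varphi(i)^{-1}}$. Consider the identity maps $\coprodc_i[X_i]_{\varphi(i)^{-1}}\to\prodc_i[X_i]_{\varphi(i)^{-1}}\to\coprodc_i[X_i]_{(2^i\varphi(i))^{-1}}$, the second of norm at most $2$. As in Corollary \ref{cor:duality_prod_coprod}, they show that also $\prod_i X_i\cong\indlim_{\varphi}\coprodc_i[X_i]_{\varphi(i)^{-1}}$ (use $\coprodnac$ in the non-archimedean case). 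Since $-\wot N_k$ commutes with filtered colimits and, by \eqref{eq:interaction_contr_(co)prod_and_monoidal_str}, with contracting coproducts, you get
$\big(\prod_i X_i\big)\wot N_k\cong\indlim_\varphi\coprodc_i[X_i\wot N_k]_{\varphi(i)^{-1}}\cong\prod_i(X_i\wot N_k)$.
The last step is the same exchange applied to the Banach modules $X_i\wot N_k$.

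\textbf{Ind-Banach case.} For $M_i=\indlim_{a\in A_i}M_{i,a}$ one has $\prod_i M_i\cong\varinjlim_{(a_i)\in\prod_i A_i}\prod_i M_{i,a_i}$. This holds because $\prod_i\varinjlim_{A_i}=\varinjlim_{\prod_i A_i}\prod_i$ in $\Set$, and both sides are detected by $\Hom_R(B,-)$ for Banach $B$. Tensoring with $N_k$ and applying the Banach case levelwise gives $\big(\prod_i M_i\big)\wot N_k\cong\prod_i(M_i\wot N_k)$.

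With this levelwise isomorphism in hand, your $\aleph_1$-filtered argument proves the first statement, and the second follows as you wrote.
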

\begin{proof}
This is \cite[Lemma 5.18]{fremod}.
\end{proof}

In the next proposition, we collect the main examples of metrizable modules that we will encounter.

\begin{prop}\label{pr:some_metrizable_mods}
Let $M\in\Ind(\Ban_R)$ be a Fréchet $R$-module. Then $M$ is metrizable. In particular, products of at most countable families of Banach $R$-modules are metrizable.
\end{prop}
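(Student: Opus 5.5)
We need to show that a Fréchet module $M=\varprojlim_{j} F_j$ is metrizable, i.e. that $\Ban_R/M$ is $\aleph_1$-filtered: every countable family of morphisms $B_k\to M$ from Banach modules admits a common factorisation through a single Banach module over $M$.

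By the remark preceding this proposition, we may assume all transition maps $\pi_{j,j+1}$ are contracting. The first point is that the limit of the sequential system, computed in $\Ind(\Ban_R)$, is represented by the Banach-module limit in $\Ban_R$ only when bounds are uniform. So we use the concrete description of morphisms into a limit. A morphism $f\colon B\to M$ from a Banach module $B$ is the same as a compatible family $f_j=\pi_j\circ f\colon B\to F_j$ of bounded maps with operator norms $c_j=\norm{f_j}$. These constants need not be bounded in $j$.

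Now take a countable family $f_k\colon B_k\to M$, $k\in\N$, with constants $c_{k,j}=\norm{\pi_j f_k}$. Choose weights $\lambda_k>0$ such that $\lambda_k c_{k,j}$ is controlled uniformly in $k$ for each fixed $j$. For instance, we take $\lambda_k$ with $\sum_k \lambda_k c_{k,j}<\infty$ for every $j$, which is possible by diagonalisation, e.g. $\lambda_k=2^{-k}/(1+\max_{j\le k}c_{k,j})$.

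Set
\[
B:=\coprodc_{k\in\N}[B_k]_{\lambda_k^{-1}},
\]
or the non-archimedean contracting coproduct when $R$ is non-archimedean. The map $B\to F_j$ sending $(x_k)$ to $\sum_k \pi_j f_k(x_k)$ is bounded. Indeed, using the elementwise estimate $\norm{\pi_j f_k(x_k)}\le c_{k,j}\lambda_k\cdot\lambda_k^{-1}\norm{x_k}$ together with $\sup_k c_{k,j}\lambda_k<\infty$, we get convergence and a bound $\sup_k(\lambda_k c_{k,j})\,\norm{x}_B$. These maps are compatible in $j$, so they define a morphism $g\colon B\to M$.

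Each inclusion $B_k\to B$ is bounded, with norm $\lambda_k^{-1}$, and $f_k$ factors as $B_k\to B\xrightarrow{g}M$. Hence $g$ is a cocone in $\Ban_R/M$ over the countable family, and $M$ is metrizable.

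The main obstacle is justifying that a morphism from a Banach module into $\varprojlim F_j$ in $\Ind(\Ban_R)$ is exactly a compatible family of bounded maps $B\to F_j$. This holds because limits in $\Ind(\Ban_R)$ are computed pointwise as presheaves, so $\Hom(B,\varprojlim F_j)=\varprojlim\Hom(B,F_j)$. With this identification in hand, the estimates above are routine.

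For the second claim, a countable product $\prod_{n} B_n$ of Banach modules is the sequential limit of the finite products $\prod_{n\le j}B_n$ along the projections. These finite products are Banach modules, since finite products exist in $\Ban_R$. The countable product is bornological by Proposition \ref{pr:born_is_concrete_reflexive_subcat}, since $\Born_R$ is closed under limits. It is therefore a Fréchet module, and the first part applies. A finite family is trivially a Banach module, hence metrizable.
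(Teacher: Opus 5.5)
Your proof is correct, and it takes a different, self-contained route. The paper gives no argument here: it cites \cite[Corollary 5.13]{fremod} for Fréchet modules and treats countable products as a special case. You instead verify the paper's definition of metrizability directly. For a countable family $f_k\colon B_k\to M$ you take as cocone the weighted contracting coproduct $\coprodc_{k}[B_k]_{\lambda_k^{-1}}$. The weights are chosen diagonally against the growth constants $c_{k,j}=\norm{\pi_j f_k}$, and your explicit $\lambda_k=2^{-k}/(1+\max_{j\leq k}c_{k,j})$ works. The bound $\sum_k\norm{\pi_j f_k(x_k)}\leq\sup_k(\lambda_k c_{k,j})\,\norm{x}$, together with compatibility in $j$, then gives the map to $\varprojlim_j F_j$, and each $f_k$ factors through it.

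What your route buys is an elementary argument that works for any sequential limit of Banach modules; bornologicity is never used. Also, the identification $\Hom_R(B,\varprojlim_j F_j)=\varprojlim_j\Hom_R(B,F_j)$ that you call the main obstacle is simply the universal property of the limit. The cited source instead places the result in a broader theory of metrizable objects. An example is the presentation of countable products as colimits over the $\aleph_1$-directed poset $\Upsilon(\N)$ (Proposition \ref{pr:prod_of_Banach_is_colimit}), which the paper reuses for Corollary \ref{cor:duality_prod_coprod}. Your diagonal choice of $\lambda_k$ is the same mechanism that makes $\Upsilon(\N)$ $\aleph_1$-directed.

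Two small precisions.
\begin{itemize}
\item The non-archimedean coproduct is needed only if you work in $\Ban_R^{\na}$. In $\Ban_R$ you should use the $\ell^1$-type coproduct for every $R$, because sums controlled only in sup-norm need not converge in an archimedean $F_j$.
\item You check exactly the paper's definition, which asks for cocones over countable sets of objects. The standard notion of $\aleph_1$-filteredness also requires coequalizing countable families of parallel arrows $u_n\colon(A,a)\to(B,b)$ in $\Ban_R/M$. This costs one extra line: each $\pi_j\circ b$ kills $\sum_n\im(u_n-u_0)$, so $b$ factors through the Banach quotient of $B$ by the closure of that submodule.
\end{itemize}
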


\begin{proof}
The prof can be found at \cite[Corollary 5.13]{fremod} for Fréchet modules. The other are special cases of Fréchet modules 
\end{proof}

%% file: sections/reflex.tex
In the following section we provide a family of bornological modules and we prove reflexivity for them. This family is big enough to include relative version of many classical spaces important in analytic geometry and functional analysis. For our purposes, this family includes spaces of power series over $R$ that are convergent on open disks or overconvergent on closed disks.
The following definition is inspired by the theory of K\"othe sequence spaces (so-called echelon and co-echelon spaces) over $\R$ or $\C$ (cf. \cite[\S 30, 8.]{koethe}).
\begin{definition}\label{def:sequential_modules}
Let $S$ be a directed set of functions $\rho:\N \to \Rp$ ordered by the relation
\begin{equation*}
\rho_1<\rho_2
\quad
\text{if and only if there is $n_0\in\N$ such that}
\quad
\rho_1(n)<\rho_2(n) 
\quad
\text{for all }n\geq n_0.
\end{equation*}
Define the \emph{sequential modules} associated to $S$ as
\begin{align*}
\lambda(S)&:=\varprojlim_{\rho\in S} \ell^1_*(\rho)
=\varprojlim_{\rho\in S} \coprodc_{n\in\N}[Re_n]_{\rho(n)},
\\
\kappa(S)&:=\varinjlim_{\rho\in S} \ell^1_*(\rho^{-1})
=\varinjlim_{\rho\in S} \coprodc_{n\in\N}[Re_n]_{\rho(n)^{-1}}.
\end{align*}
The structure morphisms of the projective and inductive systems are the inclusions
\begin{align*}
\pi_{\rho_1\rho_2} : 
\ell^1_*(\rho_2) \to \ell^1_*(\rho_1),
\quad &
e_n\mapsto e_n 
\\
\iota_{\rho_1\rho_2} :
\ell^1_*(\rho_1^{-1}) \to \ell^1_*(\rho_2^{-1}),
\quad &
e_n\mapsto e_n.
\end{align*}
respectively, for every $\rho_1<\rho_2$ in $S$.
\end{definition}

The module $\lambda(S)$ as a set consists of the sequences $(a_0,a_1,a_2,\dots)\in R^\N$ satisfying the condition
\begin{align*}
&\sum_{n=0}^\infty \abs{a_n}\rho(n) < \infty
\quad
\text{for all $\rho\in S$, or}
\\
&\lim_{n\to\infty} \abs{a_n}\rho(n) = 0
\quad
\text{for all $\rho\in S$ in the non-archimedean case}
\end{align*}
The underlying set of $\kappa(S)$ instead consists of the sequences $(a_0,a_1,a_2,\dots)\in R^\N$ for which there exists a $\rho\in S$ such that
\begin{align*}
&\sum_{n=0}^\infty \frac{\abs{a_n}}{\rho(n)} < \infty,
\quad \text{or}
\\
&\lim_{n\to\infty} \frac{\abs{a_n}}{\rho(n)} = 0 
\quad \text{in the non-archimedean case.}
\end{align*}

Now we will investigate reflexivity for these families of sequential modules. Take for example $\kappa(S)$. Its dual is 
\begin{equation}\label{eq:dual_of_k(S)}
\kappa(S)^\vee
\cong
\varprojlim_{\rho\in S}\left[\ell^1_*(\rho^{-1})\right]^\vee
\cong
\varprojlim_{\rho\in S}\ell^\infty(\rho)
\end{equation}
where we have used the isomorphism
\begin{equation*}
\prodc_{n\in\N}[R\check{e}_n]_{\rho(n)}
\cong
\Big( \coprodc_{n\in\N} [Re_n]_{\rho(n)^{-1}} \Big)^\vee
\end{equation*}
obtained from Equation \eqref{eq:interaction_contr_(co)prod_and_monoidal_str} in the particular case $I=\N,M_i=Re_i$ and $N=R$.
Note that the projective limit in the equation \eqref{eq:dual_of_k(S)} would be $\lambda(S)$ if it were possible to substitute $\ell^\infty(\rho)$ with $\ell^1_*(\rho)$. The following Lemma gives a sufficient condition to exchange $\ell^1_*$ and $\ell^\infty$ in the definition of $\lambda(S)$ and $\kappa(S)$.

\begin{lemma}\label{lem:prodc_coprodc_exchange_in_lambda-kappa}
Let $S$ be a directed set of sequences as in Definition \ref{def:sequential_modules}. Assume that there is an increasing map
\[
S \to S,
\quad
\rho \mapsto \rho_+
\]
satisfying 
\begin{gather*}
\rho(n)\leq \rho_+(n) \quad \text{for all $n\in\N$, and}
\\
\sum_{n=0}^\infty \frac{\rho(n)}{\rho_+(n)} < \infty
\\
\Big(\ \lim_{n\to\infty} \frac{\rho(n)}{\rho_+(n)} = 0 
\quad
\text{in the non-archimedean case}\ \Big).
\end{gather*}
Then, the canonical morphisms
\begin{align*}
& \lambda(S) \to \varprojlim_{\rho\in S} \ell^\infty(\rho),
\\
& \kappa(S) \to \varinjlim_{\rho\in S} \ell^\infty(\rho^{-1})
\end{align*}
are isomorphisms.
\end{lemma}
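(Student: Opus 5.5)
The approach is to interleave the two systems $\{\ell^1_*(\rho)\}$ and $\{\ell^\infty(\rho)\}$ using the cofinal map $\rho\mapsto\rho_+$, and to conclude by a cofinality argument for limits and colimits. For every $\rho\in S$ we have the canonical contracting morphism $c_\rho:\ell^1_*(\rho)\to\ell^\infty(\rho)$ (identity on sequences). By the hypothesis $\sum_n \rho(n)/\rho_+(n)<\infty$ (resp. $\rho(n)/\rho_+(n)\to0$), Proposition \ref{pr:nuclear_morph_ell} applies to $\sigma=\rho$ and $\rho_+$ in place of $\rho$. Note that $\sup_n\rho(n)/\rho_+(n)\le 1$ since $\rho\le\rho_+$. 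This gives a morphism
\[
\tau_{\rho\rho_+}:\ell^\infty(\rho_+)\to\ell^1_*(\rho),\qquad \tau_{\rho\rho_+}\circ c_{\rho_+}=\pi_{\rho\rho_+}.
\]
Concretely, $\tau_{\rho\rho_+}$ is again the identity on sequences, because a sequence with $\sup_n|a_n|\rho_+(n)<\infty$ satisfies $\sum_n|a_n|\rho(n)\le \sup_n(|a_n|\rho_+(n))\sum_n\rho(n)/\rho_+(n)<\infty$. Since all maps are identities on the underlying sequences, we also get $c_\rho\circ\tau_{\rho\rho_+}=\pi'_{\rho\rho_+}$, where $\pi'$ denotes the transition inclusion $\ell^\infty(\rho_+)\to\ell^\infty(\rho)$. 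I would verify this directly on elements, using that these are Banach modules and the maps are determined by their values.

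For $\lambda(S)$, the morphisms $c_\rho$ form a morphism of projective systems, which induces $c:\lambda(S)\to\varprojlim\ell^\infty(\rho)$. In the other direction, I define $t:\varprojlim_\rho\ell^\infty(\rho)\to\lambda(S)$ by its components $\tau_{\rho\rho_+}\circ p'_{\rho_+}$, where $p'$ are the limit projections. These components are compatible: for $\rho_1<\rho_2$ one uses that $\rho\mapsto\rho_+$ is increasing, so $\rho_{1+}\le\rho_{2+}$, and all the maps are identities on sequences, so the required square commutes. The identities $c\circ t=\mathrm{id}$ and $t\circ c=\mathrm{id}$ then follow componentwise from the two relations above and the compatibility of the projections. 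For example, the $\rho$-component of $t\circ c$ is $\tau_{\rho\rho_+}c_{\rho_+}p_{\rho_+}=\pi_{\rho\rho_+}p_{\rho_+}=p_\rho$.

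For $\kappa(S)$ I apply the same argument to the inductive systems with weights $\rho^{-1}$. Here the relevant ratio is $\rho_+^{-1}(n)/\rho^{-1}(n)=\rho(n)/\rho_+(n)$, so Proposition \ref{pr:nuclear_morph_ell} gives $\tau:\ell^\infty(\rho^{-1})\to\ell^1_*(\rho_+^{-1})$ factoring the transition $\iota_{\rho\rho_+}$. The inverse of the colimit map $\varinjlim\ell^1_*(\rho^{-1})\to\varinjlim\ell^\infty(\rho^{-1})$ is then induced by the maps $\alpha_{\rho_+}\circ\tau$, where $\alpha$ denotes the colimit insertions. The two-sided inverse property is checked on each component using $\iota_{\rho\rho_+}=\tau\circ c$, together with the analogous identity on the $\ell^\infty$ side.

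I expect the main obstacle to be bookkeeping rather than analysis. The map $\rho\mapsto\rho_+$ need not be cofinal in a strict sense, and the order on $S$ is only eventual, so one must check that the transition maps between $\rho_1<\rho_2$ are well defined. I would handle this by working entirely with maps that are the identity on sequences, so that every commutativity reduces to equality of set maps between Banach modules. The only non-formal input is then the existence and boundedness of $\tau$, which Proposition \ref{pr:nuclear_morph_ell} supplies.
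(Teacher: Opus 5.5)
Your proposal is correct and follows essentially the paper's argument. The paper also obtains $\tau$ from Proposition \ref{pr:nuclear_morph_ell}, notes that every map in the interleaving diagram is the identity on sequences, and concludes that $(\tau_\rho)$ inverts $(c_\rho)$ after passing to the limit (resp.\ colimit) by appealing to cofinality of $\rho\mapsto\rho_+$, whereas you check both composites componentwise through the universal property. That check only needs $\rho<\rho_+$ in $S$, which holds because $\rho(n)/\rho_+(n)\to 0$, and this already makes $\rho\mapsto\rho_+$ cofinal, so your worry on that point is unnecessary.
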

\begin{proof}
First note that, by Proposition \ref{pr:nuclear_morph_ell}, we have well-defined morphisms
\begin{align*}
&\tau_\rho:\ell^\infty(\rho_+) \to \ell^1_*(\rho)
\\
&\tau_{\rho^{-1}}:\ell^\infty(\rho^{-1}) \to \ell^1_*(\rho_+^{-1})
\end{align*}
which are the identity on elements. We prove the statement for the projective limit, the case of the inductive limit is analogous. Suppose we have $\rho,\sigma \in S$ with $\rho\leq \sigma$. Than we can form the commutative diagram
\begin{equation}\label{eq:prodc-corpodc-equivalence_diagram}
\begin{tikzcd}
\ell^1_*(\sigma_+)
\ar[d,"\pi_{\rho_+\sigma_+}"] \ar[r,"c_{\sigma_+}"]
\ar[rr,bend left,"\pi_{\sigma\sigma_+}"]
&
\ell^\infty(\sigma_+)
\ar[d, "\tilde\pi_{\rho_+\sigma_+}"] \ar[r,"\tau_\rho"]
\ar[rr,bend left,"\tilde\pi_{\sigma\sigma_+}"]
&
\ell^1_*(\sigma)
\ar[d,"\pi_{\rho\sigma}"] \ar[r,"c_{\sigma}"]
&
\ell^\infty(\sigma)
\ar[d,"\tilde\pi_{\rho\sigma}"]
\\
\ell^1_*(\rho_+)
\ar[r,"c_{\rho_+}"]
\ar[rr,bend right,"\pi_{\rho\rho_+}"]
&
\ell^\infty(\rho_+)
\ar[r,"\tau_\rho"]
\ar[rr,bend right,"\tilde\pi_{\rho\rho_+}"]
&
\ell^1_*(\rho)
\ar[r,"c_{\rho}"]
&
\ell^\infty(\rho)
\end{tikzcd}
\end{equation}
In the diagram \eqref{eq:prodc-corpodc-equivalence_diagram}, every morphism is the identity on elements. To be explicit,
\begin{itemize}
    \item [-] $\pi_{*\bullet}$ denotes the structure morphism of the projective system defining $\lambda(S)$;
    \item[-] $\tilde\pi_{*\bullet}$ denotes the structure morphism in the projective system of the $\ell^\infty$-variant of $\lambda(S)$;
    \item[-] $c_{\bullet}$ denotes the canonical morphism from the contracting coproduct to the contracting product.
\end{itemize}
The diagram \eqref{eq:prodc-corpodc-equivalence_diagram} shows that $(\tau_\rho)_{\rho\in S}$ is a morphism of projective systems and its composition with $(c_\rho)_{\rho\in S}$ produces $(\pi_{\rho\rho_+})_{\rho\in S}$ in one order and $(\tilde\pi_{\rho\rho_+})_{\rho\in S}$ in the other. We just need to check that the functor $\varprojlim_{\rho\in S}$ sends $(\pi_{\rho\rho_+})_{\rho\in S}$ to the identity of $\lambda(S)$ and $(\tilde\pi_{\rho\rho_+})_{\rho\in S}$ to the identity of $\varprojlim_{\rho\in S}\ell^\infty(\rho)$. This follows from the observation that $\rho\to\rho_+:S \to S$ is a cofinal injective map of partially ordered sets.

\end{proof}
In the case of Lemma \ref{lem:prodc_coprodc_exchange_in_lambda-kappa}, we have an isomorphism
\[
\kappa(S)^\vee\cong\lambda(S),
\]
but when we compute $\lambda(S)^\vee$ there is the problem that in general $(-)^\vee$ does not send limits to colimits. For this reason we study the mentioned problem in the case of some special Fréchet modules.
\subsection{Duality and Fréchet modules}
In what follows $F$ denotes a Fréchet $R$-module given by a projective system
\[
\pi_{ij}: F_j\to F_i,\quad i,j\in\N,\,i\leq j.
\]
The projection morphism $F\to F_j$ will be denoted by $\pi_j$.
\begin{definition}
A Fréchet module $F=\varprojlim_j F_j$ is said to be \emph{of type EM} if the projection morphisms
\[
\pi_j: F \to F_j
\]
are both monomorphisms and epimorphisms for all $j\in\N$.
\end{definition}

\begin{remark}\label{rmk:crit_for_being_EM}
To check that the morphism $\pi_j$ is a monomorphism and an epimorphism one can simply check that $\pi_j$ is injective and that there is a Banach sub-module $B\subset F$ such that $\pi_j(B)$ is dense in the Banach module $F_j$. This follows from the fact that $F$, as a bornological module, is a union of Banach modules and $\pi_j$, as a morphism from a bornological module to a Banach one, is determined by all the restrictions $\eval[1]{\pi_j}_B$ to the Banach sub-modules $B\subset F$. For $\pi_j$ to be a monomorphism it is sufficient and necessary that $\eval[1]{\pi_j}_B$ is injective for every Banach $B\subset F$. Since every element of $F$ is contained in some Banach sub-module, $\pi_j$ must be injective as a map of sets. For $\pi_j$ to be an epimorphism it is sufficient that $\eval[1]{\pi_j}_B$ is an epimorphism for some Banach $B\subset F$, which is true precisely when $\pi_j(B)$ is dense in $F_j$, for some Banach $B\subset F$.
\end{remark}

With the next proposition we introduce the main example we are interested in.

\begin{prop}\label{pr:EM-lambdas}
Let $\rho=(\rho_j)_{j\in\N}$ be a sequence of maps $\rho_j:\N\to\Rp$ such that $\rho_i<\rho_j$ for $i<j$. Then the sequential module
\[
\lambda(\rho):=
\varprojlim_{j\in\N}\coprodc_{n\in\N}[Re_n]_{\rho_j(n)}
\]
is a Fréchet module of type EM.
\end{prop}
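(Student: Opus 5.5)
The plan is to verify the two defining properties separately: that $\lambda(\rho)$ is a Fréchet module, and that every projection $\pi_j$ is a monomorphism and an epimorphism. For the second property I will use the criterion of Remark \ref{rmk:crit_for_being_EM}.

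\textbf{Fréchet.} First I check that the transition maps $\pi_{ij}:\ell^1_*(\rho_j)\to\ell^1_*(\rho_i)$, $e_n\mapsto e_n$, are well-defined morphisms of $\Ban_R$ for $i<j$.
\begin{itemize}
\item Since $\rho_i<\rho_j$, there is an $n_0$ with $\rho_i(n)<\rho_j(n)$ for all $n\geq n_0$.
\item Hence $\sup_n \rho_i(n)/\rho_j(n)\leq \max\{1,\max_{n<n_0}\rho_i(n)/\rho_j(n)\}<\infty$.
\item The first part of Proposition \ref{pr:nuclear_morph_ell} then gives boundedness.
\end{itemize}
So $\lambda(\rho)$ is the limit of a sequential projective system of Banach modules, computed in $\Ind(\Ban_R)$. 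By Proposition \ref{pr:born_is_concrete_reflexive_subcat}, $\Born_R$ is reflective, so this limit is bornological. It is therefore a Fréchet module, and by the subsequent remark we may renormalise the $\ell^1_*(\rho_j)$ to make the transitions contracting.

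\textbf{Injectivity of $\pi_j$.} The forgetful functor $U$ commutes with limits, so $U(\lambda(\rho))$ is the set of compatible families $(x^{(k)})_k$ with $x^{(k)}\in\ell^1_*(\rho_k)$. Every $\pi_{ik}$ is the identity on underlying sequences in $R^\N$. A compatible family is therefore a single sequence lying in every $\ell^1_*(\rho_k)$, and it is determined by its $j$-th component. Thus $\pi_j$ is injective.

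\textbf{Density of $\pi_j(B)$ for a single Banach submodule $B$.} This is the step I expect to be the main obstacle. The finitely supported sequences are dense in each $\ell^1_*(\rho_j)$, but they do not form a Banach submodule, and Remark \ref{rmk:crit_for_being_EM} asks for one Banach $B\subset\lambda(\rho)$. I will produce $B$ by a diagonal construction: set
\[
\sigma(n):=\max_{k\leq n}\rho_k(n)
\]
and $B:=\ell^1_*(\sigma)$. Then:
\begin{itemize}
\item For each $j$ and each $n\geq j$ we have $\rho_j(n)\leq\sigma(n)$, and only finitely many $n<j$ remain. So $\sup_n\rho_j(n)/\sigma(n)<\infty$.
\item By Proposition \ref{pr:nuclear_morph_ell}, the identity on elements gives a bounded map $\ell^1_*(\sigma)\to\ell^1_*(\rho_j)$ for every $j$.
\item These maps are compatible with the $\pi_{ij}$, so they induce a morphism $\ell^1_*(\sigma)\to\lambda(\rho)$. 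It is injective, so $\ell^1_*(\sigma)$ is a Banach submodule of $\lambda(\rho)$.
\end{itemize}
Finally, $\pi_j(\ell^1_*(\sigma))$ contains every $e_n$, hence all finitely supported sequences. These are dense in $\ell^1_*(\rho_j)$ in both the archimedean and the non-archimedean case, by the definition of the contracting coproducts. Remark \ref{rmk:crit_for_being_EM} then shows that each $\pi_j$ is an epimorphism, so $\lambda(\rho)$ is of type EM.
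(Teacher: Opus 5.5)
Your proof is correct, and its skeleton matches the paper's. You get injectivity of $\pi_j$ because every map is the identity on underlying sequences. You then apply Remark~\ref{rmk:crit_for_being_EM} to a single Banach submodule $B\subset\lambda(\rho)$ that contains every $e_n$.

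Where you differ is the choice of $B$. The paper takes the sequences with $\sup_j \norm{x}_{\ell^1(\rho_j)}/c_j<\infty$. Here $c_j=\sum_{k\le j}\rho_j(k)$ is chosen so that $e_0,\dots,e_j$ all have $\rho_j$-norm at most $c_j$. The paper must then prove completeness by realising $B$ as the kernel of a map between contracting products, and check boundedness of each $B\to\ell^1_*(\rho_j)$ by hand.

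Your diagonal weight $\sigma(n)=\max_{k\le n}\rho_k(n)$ makes $B=\ell^1_*(\sigma)$ one of the standard weighted modules. So completeness is free, and boundedness of $B\to\ell^1_*(\rho_j)$ is just the first part of Proposition~\ref{pr:nuclear_morph_ell}. The archimedean and non-archimedean cases are also handled uniformly, whereas the paper writes its norm on $B$ with $\ell^1$-sums in both cases.

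What the paper's rescaled-supremum construction buys in exchange is generality. It is the standard argument that any countable family in a Fréchet module lies in a single Banach submodule, and it never uses the $\ell^1_*$ shape of the steps; yours exploits that shape directly.

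You also check explicitly that the transition maps are bounded and that the limit is bornological, which the paper leaves implicit.
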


\begin{proof}
By Remark \ref{rmk:crit_for_being_EM}, we have to check that, for all $j\in\N$, the projection morphism ${\pi_j:\lambda(\rho)\to\ell^1_*(\rho_j)}$ is injective and there is a Banach submodule $B$ of $F$ such that $\pi_j(B)$ is dense in $\ell^1_*(\rho_j)$.
The morphisms $\pi_j$ are injective because they act as the identity on elements. Recall that $\ell^1_*(\rho_j)$ is the completion of the algebraic $R$-submodule generated by $\{e_n:n\in\N\}$ for the norm induced by $\rho_j$. It is enough to prove that there is a Banach sub-module $B\subset\lambda(\rho)$ containing $e_n$ for every $n\in\N$. Define
\begin{equation*}
B=
\left\{
x=\sum_{n=0}^\infty x_n e_n \in \prod_{n\in\N} R\,e_n
: 
\norm{x}_B<\infty
\right\}
\end{equation*}
where $\norm{x}_B$ is defined to be the supremum
\begin{equation*}
\sup_{j\in\N}
\frac{\norm{x}_{\ell^1(\rho_j)}}{\sum_{k=0}^j\rho_j(k)}
=
\sup_{j\in\N}
\frac{\sum_{n=0}^\infty \abs{x_n}\rho_j(n)}{\sum_{k=0}^j\rho_j(k)}.
\end{equation*}
It is clear that $(B,\norm{-}_B)$ is a Banach module once we observe that it is the kernel of the morphism 
\begin{equation*}
\prodc_{j\in\N}
\left[
\ell^1_*(\rho_j)
\right]_{\frac{1}{\sum_{k=0}^j\rho_j(k)}}
\xrightarrow{\mkern10mu \Delta \mkern10mu}
\prodc_{j\in\N}
\left[
\ell^1_*(\rho_j)
\right]_{\frac{1}{\sum_{k=0}^j\rho_j(k)+\sum_{k=0}^{j+1}\rho_{j+1}(k)}},
\quad
(x^{(j)})_{j\in\N}
\;\longmapsto\;
(x^{(j)}-x^{(j+1)})_{j\in\N}
\end{equation*}
in $\Ban_R$. For every $n\in\N$, the element $e_n$ belongs to $B$ because we can compute its norm directly and see that it is finite:
\begin{equation*}
\norm{e_n}_B=\sup_{j\in\N} \frac{\rho_j(n)}{\sum_{k=0}^j\rho_j(k)}
\leq
1+\max_{j\leq n} \frac{\rho_j(n)}{\sum_{k=0}^j\rho_j(k)}
<\infty.
\end{equation*}
To end the proof we can check that the identity of $\prod_{n\in\N}R\,e_n$ induces a well-defined morphism from $\iota:B\to \lambda(\rho)$. Since
\[
\lambda(\rho)=\lim_{j\in\N}\ell^1_*(\rho_j),
\]
$\iota$ is well-defined if and only if all the morphisms
\begin{equation*}
\iota_j: B \to \ell^1_*(\rho_j), \quad x\mapsto x
\end{equation*}
are well-defined and bounded, but this is obvious from the definition of $\norm{x}_B$:
\begin{equation*}
\begin{split}
\norm{x}_{\ell^1_*(\rho_j)} 
&=
\sum_{n=0}^\infty \abs{x_n} \rho_j(n)
\\
&=
\sum_{k=0}^j \rho_j(k) 
\frac{\sum_{n=0}^\infty \abs{x_n} \rho_j(n)}{\sum_{k=0}^j \rho_j(k)}
\\
&\leq \Big( \sum_{k=0}^j \rho_j(k) \Big)\norm{x}_B.
\end{split}
\end{equation*}
To recap, have constructed a Banach module $B\subset \lambda(\rho)$ such that $\pi_j(B)$ contains $e_n$ for all $n\in\N$. Therefore $\pi_j(B)$ is dense in $\ell^1_*(\rho_j)$ for every $j\in\N$ and we can conclude that $\lambda(\rho)$ is of type EM.
\end{proof}

\begin{lemma} \label{lem:dual_FréchetEM}
Let $F$ be a Fréchet $R$-module of type EM. Let $(\pi_{ij}:F_j\to F_i)_{i\leq j}$ be the projective system defining $F$. Then, the natural morphism
\begin{equation*}
\varinjlim_{j\in\N} F_j^\vee \to F^\vee
\end{equation*}
induced by the inductive system $(\pi_{ij}^\vee: F_i^\vee \to F_j^\vee)_{i\leq j}$ is bijective as a map of sets.
\end{lemma}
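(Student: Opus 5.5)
We have to show that the map $\Phi:\varinjlim_j U(F_j^\vee)\to U(F^\vee)$ is bijective. Here $U$ is the underlying-set functor of Proposition \ref{pr:born_is_concrete_reflexive_subcat}, and it commutes with filtered colimits. So the underlying set of the source is the set-theoretic colimit of the sets $\Hom_R(F_j,R)$ along the maps $\xi\mapsto \xi\circ\pi_{ij}$. The underlying set of the target is $\Hom_R(F,R)$, and $\Phi$ sends the class of $\xi\in F_j^\vee$ to $\xi\circ\pi_j$. The plan is to prove injectivity and surjectivity separately.

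\emph{Injectivity.} Suppose $\xi\in F_i^\vee$ and $\eta\in F_j^\vee$ satisfy $\xi\circ\pi_i=\eta\circ\pi_j$. Set $k=\max(i,j)$. Then $(\xi\circ\pi_{ik}-\eta\circ\pi_{jk})\circ\pi_k=0$. Since $F$ is of type EM, $\pi_k$ is an epimorphism, so $\xi\circ\pi_{ik}=\eta\circ\pi_{jk}$ already in $F_k^\vee$. Hence the two classes coincide in the colimit. Only the epimorphism half of the EM condition is used here.

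\emph{Surjectivity.} This is the main step. Let $f:F\to R$ be bounded. We use the standard description of the underlying module of the limit: $F$ consists of compatible sequences $(x_j)$ with $x_j\in F_j$, and by the preceding remark we may take all $\pi_{ij}$ contracting. The idea is that boundedness of $f$ on a suitable Banach submodule forces $f$ to factor continuously through some $F_j$. We argue by contradiction. Suppose that for every $j$ there is no constant $C_j$ with $\abs{f(x)}\le C_j\norm{\pi_j(x)}_{F_j}$ for all $x\in F$. Then we can pick $x^{(j)}\in F$ with
\[
\norm{\pi_j(x^{(j)})}_{F_j}\le 2^{-j},\qquad \abs{f(x^{(j)})}\geq j .
\]
Put $y^{(j)}=2^{j}\ldots$; more precisely, after rescaling the $x^{(j)}$ by suitable integers (or using elements of norm at most $1$ in $R$, which is the delicate point for a general Banach ring), we want all the $y^{(j)}$ to lie in one Banach submodule $B$ of $F$ while $\abs{f(y^{(j)})}$ stays unbounded. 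The natural candidate is
\[
B=\{x\in F:\ \sup_j c_j\norm{\pi_j(x)}_{F_j}<\infty\}
\]
for a rapidly decreasing sequence $c_j$, the same type of construction as in Proposition \ref{pr:EM-lambdas}. Because each $\pi_k$ is contracting, $\norm{\pi_k(x^{(j)})}\le 2^{-j}$ for $k\le j$. For $k>j$ the values $\norm{\pi_k(x^{(j)})}$ are finite, and we choose the $c_k$ small enough to absorb the finitely many earlier terms, by a diagonal argument. Then $f|_B$ is unbounded, which contradicts the boundedness of $f$ on Banach submodules. This is exactly the metrizability of Fréchet modules (Proposition \ref{pr:some_metrizable_mods}); alternatively one can quote it to get that $f$ is bounded for the limit norm. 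The obstacle is handling scalars in a general $R$: the rescaling must be done by integers $m$, using $\norm{m x}\le m\norm{x}$, together with a choice of $m$ matched against $\abs{f(mx)}$. For this reason it may be cleaner to phrase everything directly through the slice category being $\aleph_1$-filtered.

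Once we have $C_j$ with $\abs{f(x)}\le C_j\norm{\pi_j(x)}$, define $\xi$ on $\pi_j(F)$ by $\xi(\pi_j(x))=f(x)$. This is well defined because $\pi_j$ is injective (the monomorphism half of EM), and it is bounded by $C_j$. Since $\pi_j(F)$ is dense in $F_j$ by Remark \ref{rmk:crit_for_being_EM}, $\xi$ extends uniquely to an element of $F_j^\vee$ by completeness of $R$. By construction $\xi\circ\pi_j=f$, which gives surjectivity.
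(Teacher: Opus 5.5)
Your injectivity argument and the extension-by-density step are correct, and your surjectivity strategy (argue by contradiction, then force the bad elements into one Banach module on which $f$ must be bounded) is the paper's. The gap is the normalization. You pick $x^{(j)}$ with $\norm{\pi_j(x^{(j)})}_{F_j}\le 2^{-j}$ and $\abs{f(x^{(j)})}\ge j$, which requires rescaling by scalars, and over a general Banach ring this is not available. For $R=\Z_\na$, the case the paper cares most about, every value satisfies $\abs{f(x)}\le 1$, so $\abs{f(x^{(j)})}\ge j$ is impossible. Since the norms are only submultiplicative, multiplying by an integer $m$ gives no control on the ratio $\abs{f(mx)}/\norm{\pi_j(mx)}_{F_j}$. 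Quoting metrizability does not close the gap either. The $\aleph_1$-filteredness of $\Ban_R/F$ gives one Banach module through which countably many maps factor, but with no uniform bound on the factoring maps. There is also no single norm on $F$ for $f$ to be ``bounded for''.

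The missing idea is to put the normalization into the weights of the test module instead of into the elements. Keep only what unboundedness gives directly: $x_j\in F\setminus\{0\}$ with $\abs{f(x_j)}\ge 2^j\rho(j)$, where $\rho(j):=\norm{\pi_j(x_j)}_{F_j}>0$ by injectivity of $\pi_j$. The paper then defines $\ell^1_*(\rho)\to F$, $e_n\mapsto x_n$. This is well defined because contractivity gives $\norm{\pi_k(x_n)}_{F_k}\le\rho(n)$ for $n\ge k$, and only finitely many $n<k$ remain; this is your diagonal argument. Boundedness of the composite $\ell^1_*(\rho)\to F\to R$ forces $\sup_n\abs{f(x_n)}/\rho(n)<\infty$, a contradiction. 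Your module $B$ also works once you choose $c_k\le 1$ with $c_k\norm{\pi_k(x_j)}_{F_k}\le\rho(j)$ for all $j<k$. Then $\norm{x_j}_B\le\rho(j)$ for every $j$, and $\abs{f(x_j)}\le C\rho(j)$, with $C$ the norm of $f$ restricted to $B$, contradicts the choice of $x_j$.
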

\begin{proof}
Given any $j\in\N$, the assumption that $\pi_j : F \to F_j$ is a monomorphism implies that an element $\xi\in F_j^\vee$ is zero precisely when $\xi\circ\pi_j$ is. Hence, the map $\varinjlim_{j\in\N} F_j^\vee \to F^\vee$ is injective. Now we check that it is surjective.
Let $\xi:F\to R$ be an element of $F^\vee$. Consider the two cases:
\begin{enumerate}[a)]
    \item there is $j\in\N$ such that 
\begin{equation*}
\sup_{x\in F\setminus\{0\}}  \frac{\abs{\xi(x)}}{\norm{\pi_j(x)}_{F_j}}=:C <\infty;
\end{equation*}
    \item for every $j\in\N$
    \[
    \sup_{x\in F\setminus\{0\}}  
    \frac{\abs{\xi(x)}}{\norm{\pi_j(x)}_{F_j}}
    =\infty.
    \]
\end{enumerate}
If case a) holds, then we can extend $\xi$ to a morphism $F_j\to R$ in the following way. For any $x\in F_j$ there is a sequence $(x_n)_{n\in\N}$ of elements of $F$ such that $\norm{\pi_j(x_n)-x}_{F_j}$ converges to $0$ as $n\to\infty$ because $\pi_j(F)$ is a dense subset of the Banach module $F_j$. The sequence $(\xi(x_n))_{n\in\N}$ is Cauchy in $R$. Indeed, for every $\varepsilon>0$ there is $n(\varepsilon)\in\N$ such that $\norm{\pi_j(x_n)-x}_{F_j}\leq\varepsilon$ for any natural number $n\geq n(\varepsilon)$ and 
\[\begin{split}
\abs{\xi(x_n)-\xi(x_k)}
&=\abs{\xi(x_n-x_k)} \\
&\leq C \norm{\pi_j(x_n-x_k)}_{F_j} \\
&\leq C \Big(\norm{\pi_j(x_n)-x}_{F_j} + \norm{x-\pi_j(x_k)}_{F_j}\Big) \\
&\leq 2C\varepsilon
\end{split}\]
for all integers $n,k\geq n(\varepsilon)$. By definition of Banach rings, Cauchy sequences are convergent. Define
\[
\xi(x):=\lim_{n\to\infty} \xi(x_n).
\]
The value $\xi(x)$ does not depend on the choice of the sequence $(x_n)_{n\in\N}$, as any other sequence $(x_n')_{n\in\N}$ with $\pi_j(x_n')$ converging to $x$ satisfies
\[\begin{split}
\abs{\xi(x_n)-\xi(x_n')}
&=\abs{\xi(x_n-x_n')} \\
&\leq C \norm{\pi_j(x_n-x_n')}_{F_j} \\
&\leq C \norm{\pi_j(x_n)-\pi_j(x_n')}_{F_j} \\
&\leq C\Big(\norm{\pi_j(x_n)-x}_{F_j}+\norm{x-\pi_j(x_n')}_{F_j}\Big)
\xrightarrow{n\to\infty} 0.
\end{split}\]
By continuity of the norm we also have the inequality $\abs{\xi(x)}\leq C \norm{x}_{F_j}$. This definition gives the extension of $\xi$ to $F_j$. Now we prove by contradiction that the case b) is impossible. Suppose that 
\[
\sup_{x\in F\setminus\{0\}}  
\frac{\abs{\xi(x)}}{\norm{\pi_j(x)}_{F_j}}
=\infty
\]
for all $j\in\N$. Then, for every $j\in\N$ we can find $x_j\in F\setminus \{0\}$ such that 
\[
\abs{\xi(x_j)}\geq 2^j \norm{\pi_j(x_j)}_{F_j}.
\]
Define the sequence $\rho:\N\to\Rp$ by
\[
\rho(j)=\norm{\pi_j(x_j)}_{F_j},
\quad
\text{for all }j\in\N.
\]
Now we prove that there is a morphism 
\[
x: \ell^1_*(\rho) \to F
\]
such that $x(e_n)=x_n$ for every $n\in\N$. Note that 
$\Hom_R(\ell^1_*(\rho),F)$ consists of the $R$-linear maps 
$f:\ell^1_*(\rho)\to F$ which satisfy the condition
\[
\sup_{n\in\N}\frac{\norm{f(e_n)}_{F_j}}{\rho(n)}<\infty
\quad
\text{for every }j\in\N.
\]
We just need to verify that
\[
\sup_{n\in\N} \frac{\norm{x_n}_{F_j}}{\rho(n)}  < \infty
\]
for any $j\in\N$. For every $j\in\N$ and every integer $n\geq j$ we have
that $\norm{\pi_j(x_n)}_{F_j}\leq \rho(n)$ because 
\[\begin{split}
\norm{\pi_j(x_n)}_{F_j}
&= \norm{\pi_{jn}\big(\pi_n(x_n)\big)}_{F_j} \\
&\leq \norm{\pi_n(x_n)}_{F_n}\\
&=\rho(n).
\end{split}\]
Therefore,
\[
\sup_{n\in\N} \frac{\norm{\pi_j(x_n)}_{F_j}}{\rho(n)}
=\max_{n<j} \frac{\norm{\pi_j(x_n)}_{F_j}}{\rho(n)}
<\infty
\]
and the morphism $x:\ell^1_*(\rho) \to F$ is well-defined. Consider the composition $\xi\circ x$, which is a bounded $R$-linear map between Banach modules
\[
\xi\circ x : \ell^1_*(\rho)\to R,
\]
sending $e_n$ to $\xi(x_n)$. The boundedness of $\xi\circ x$ is equivalent to
\[
\sup_{n\in\N}\frac{\abs{\xi(x_n)}}{\rho(n)}
<\infty.
\]
However, by construction of the sequence $(x_n)_{n\in\N}$,
\[
2^n\leq\frac{\abs{\xi(x_n)}}{\rho(n)},
\quad
\text{for every }n\in\N,
\]
a clear contradiction.
\end{proof}

\begin{theorem}\label{th:iso_dual_Fréchet}
Let $F$ be a Fréchet $R$-module of type EM. Let $(\pi_{ij}:F_j\to F_i)_{i\leq j}$ be the projective system defining $F$. Then, the natural morphism
\begin{equation*}
\varinjlim_{j\in\N} F_j^\vee \to F^\vee
\end{equation*}
induced by the inductive system $(\pi_{ij}^\vee: F_i^\vee \to F_j^\vee)_{i\leq j}$ is an isomorphism in $\Ind(\Ban_R)$.    
\end{theorem}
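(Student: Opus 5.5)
The plan is to test the morphism on Banach modules, as recalled after the definition of $\Ind(\Ban_R)$. A morphism $f:M\to N$ of ind-Banach modules is an isomorphism if and only if $\Hom_R(B,f)$ is bijective for every Banach module $B$. So I fix $B\in\Ban_R$ and show that
\[
\Hom_R\Big(B,\indlim_{j\in\N} F_j^\vee\Big)\longrightarrow \Hom_R(B,F^\vee)
\]
is a bijection.

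First I rewrite both sides. Because $B$ is representable, the formula $\Hom_R(M,N)=\varprojlim_i\varinjlim_j\Hom_R(M_i,N_j)$ gives for the left-hand side
\[
\varinjlim_{j}\Hom_R(B,F_j^\vee)\cong\varinjlim_j\Hom_R(F_j,B^\vee).
\]
Here I use the tensor--hom adjunction in $\Ban_R$ and the symmetry of $\wot$, namely $\Hom_R(B,\ihom_R(F_j,R))\cong\Hom_R(B\wot F_j,R)\cong\Hom_R(F_j,\ihom_R(B,R))$. The same adjunction in the closed symmetric monoidal category $\Ind(\Ban_R)$ identifies the right-hand side with $\Hom_R(F,B^\vee)$. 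Under these identifications the map becomes
\[
\varinjlim_j\Hom_R(F_j,N)\to\Hom_R(F,N),\qquad [g]\mapsto g\circ\pi_j,\qquad N:=B^\vee.
\]
I would check this naturality carefully, since it is what makes the transition maps $\pi_{ij}^\vee$ correspond to precomposition with $\pi_{ij}$. The theorem is thereby reduced to a version of Lemma \ref{lem:dual_FréchetEM} in which the target $R$ is replaced by an arbitrary Banach module $N$.

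Next I prove that generalized lemma by rerunning the proof of Lemma \ref{lem:dual_FréchetEM} verbatim with $\abs{-}$ replaced by $\norm{-}_N$.

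\textbf{Injectivity.} Each $\pi_j$ is an epimorphism, so $g\circ\pi_j=0$ forces $g=0$. This argument is cleaner than using monomorphicity and works for any target.

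\textbf{Surjectivity.} Let $\xi:F\to N$ be bounded and split into the same two cases.
\begin{enumerate}[a)]
\item If $\norm{\xi(x)}_N\leq C\norm{\pi_j(x)}_{F_j}$ for some $j$, then $\xi$ extends to $F_j$. This uses the density of $\pi_j(F)$, which comes from the EM hypothesis, together with the completeness of $N$; only these two facts were used in the original argument.
\item Otherwise, for each $j$ I choose $x_j$ with $\norm{\xi(x_j)}_N\geq 2^j\norm{\pi_j(x_j)}_{F_j}$ and set $\rho(j)=\norm{\pi_j(x_j)}_{F_j}$. As before, $e_n\mapsto x_n$ defines a morphism $x:\ell^1_*(\rho)\to F$. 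Then $\xi\circ x:\ell^1_*(\rho)\to N$ is a bounded map between Banach modules, so $\sup_n\norm{\xi(x_n)}_N/\rho(n)<\infty$, which contradicts $\norm{\xi(x_n)}_N/\rho(n)\geq 2^n$.
\end{enumerate}
Case a) also produces a representative $g$ with $g\circ\pi_j=\xi$, which gives surjectivity of the colimit map.

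The step I expect to need the most care is the first reduction, not the analytic part. There I must make sure that $\Hom_R(B,-)$ commutes with the formal colimit $\indlim_j F_j^\vee$ (true by the definition of $\Ind$), and that the adjunction isomorphisms are compatible with the comparison map. A second, smaller point is the description of morphisms $\ell^1_*(\rho)\to F$ used in case b). Since $F=\varprojlim_j F_j$ is a limit in $\Ind(\Ban_R)$, a morphism into $F$ is a compatible family of bounded maps $\ell^1_*(\rho)\to F_j$, and in both the archimedean and non-archimedean settings each such map is determined by $\sup_n\norm{\pi_j(f(e_n))}_{F_j}/\rho(n)<\infty$. The boundedness estimate from the original proof then applies unchanged.
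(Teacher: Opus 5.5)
Your proof is correct, and it takes a different route from the paper's. The paper keeps the target $R$ fixed and resolves $B$ instead:
\begin{itemize}
\item it takes flat projective Banach modules $P_1\to P_0$ with cokernel $B$ (Proposition~\ref{pr:proj_resol_of_Banach});
\item it uses flatness and metrizability of $P_k$ to get $P_k\wot F\cong\varprojlim_j P_k\wot F_j$ (Proposition~\ref{pr:lim-wot-compatibility_for_metrizables});
\item it checks that $P_k\wot F$ is again Fr\'echet of type EM and applies Lemma~\ref{lem:dual_FréchetEM} to it;
\item it descends to $B$ by left exactness of $\Hom_R(-,X)$.
\end{itemize}

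You instead use the symmetry of $\wot$ to move $B$ into the target: $\Hom_R(B,X^\vee)\cong\Hom_R(X,B^\vee)$, naturally in $X$. You then need the lemma only for $F$ itself, but with an arbitrary Banach target $N=B^\vee$. This is exactly the generalization the paper records in the remark after the theorem, and its proof does go through verbatim. Your naturality check is also fine: $\pi_{ij}^\vee\circ h$ corresponds to $\tilde h\circ\pi_{ij}$, and $\pi_j^\vee\circ h$ corresponds to $\tilde h\circ\pi_j$.

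Your route is more economical. It avoids projective resolutions, flatness, the tensor/limit interchange imported from \cite{fremod}, and the check that $P_k\wot F$ is of type EM. The paper's route has only the modest advantage of using the lemma exactly as stated, with target $R$.

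Your remark on injectivity is well taken. What forces $g=0$ from $g\circ\pi_j=0$ is the epimorphism property of $\pi_j$ (density of its image), not the monomorphism property cited in the paper's proof of the lemma. The monomorphism half of the EM hypothesis is still used, to guarantee $\rho(j)=\norm{\pi_j(x_j)}_{F_j}>0$ in case b).
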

\begin{proof}
Since objects of $\Ind(\Ban_R)$ are presheaves on $\Ban_R$, it is enough to check that for every Banach $R$-module $B$ the map
\begin{equation*}
\Hom_R\big( B,\varinjlim_{j\in\N} F_j^\vee \big)
\to 
\Hom_R(B,F^\vee)
\end{equation*}
is bijective. Consider a morphism of flat projective Banach $R$-modules $P_1\to P_0$ such that $B$ is its cokernel as in Proposition \ref{pr:proj_resol_of_Banach}.
Consider the commutative square 
\begin{equation}\label{eq:square1_main_th}
\begin{tikzcd}
\Hom_R(P_0,\varinjlim_{j\in\N} F_j^\vee) 
\ar[r] \ar[d]
&
\Hom_R(P_0,F^\vee) 
\ar[d]
\\
\Hom_R(P_1,\varinjlim_{j\in\N} F_j^\vee) 
\ar[r]
&
\Hom_R(P_1,F^\vee)
\end{tikzcd}
\end{equation}
induced by the morphism $P_1\to P_0$ and the natural morphism $\varinjlim_j F_j^\vee \to F^\vee$. It is naturally isomorphic to the commutative square
\begin{equation}\label{eq:square2_main_th}
\begin{tikzcd}
\varinjlim_{j\in\N}\Hom_R(P_0\wot F_j, R)
\ar[r] \ar[d]
&
\Hom_R(\varprojlim_{j\in\N} P_0\wot F_j, R)
\ar[d]
\\
\varinjlim_{j\in\N}\Hom_R(P_1\wot F_j, R)
\ar[r]
&
\Hom_R(\varprojlim_{j\in\N} P_1\wot F_j, R)
\end{tikzcd}
\end{equation}
for the following reason. Consider first the left column of the squares \eqref{eq:square1_main_th} and \eqref{eq:square2_main_th}. For $k=0,1$, we have the natural isomorphism
\begin{equation*}
\Hom_R(P_k,\varinjlim_{j\in\N} F_j^\vee)
\cong
\varinjlim_{j\in\N}\Hom_R(P_k, F_j^\vee)
\end{equation*}
because $P_k$ is Banach and 
\[
\varinjlim_{j\in\N} F_j^\vee
=
\indlim_{j\in\N}F_j^\vee
\] 
in $\Ind(\Ban_R)$. For every $j\in\N$, tensor-hom adjunction gives the natural isomorphism
\[
\Hom_R(P_k,F_j^\vee)
\cong
\Hom_R(P_k\wot F_j, R).
\]
On the right column of \eqref{eq:square1_main_th} and \eqref{eq:square2_main_th}, the same adjunction produces the natural isomorphism
\[
\Hom_R(P_k,F^\vee)
\cong
\Hom_R(P_k\wot F, R).
\]
Recall that $P_k$ is a flat projective Banach module. Thus, by Proposition \ref{pr:some_metrizable_mods}, it is flat and metrizable. By Proposition \ref{pr:lim-wot-compatibility_for_metrizables}, $P_k\wot-$ commutes with countable projective limits, in particular,
\[
P_k\wot F \cong \varprojlim_{j\in\N} P_k\wot F_j
\]
naturally. Now observe that the rows of the commutative square \eqref{eq:square2_main_th} are the underlying maps of sets of the natural morphism connecting the dual of a Fréchet module with the colimit of its dual inductive system as in Lemma \ref{lem:dual_FréchetEM}. The Fréchet module 
\[
P_k\wot F=\varprojlim_{j\in\N} P_k \wot F_j
\]
is of type EM because the projection $\id_{P_k}\wot \pi_j:P_k\wot F \to P_k\wot F_j$ is obtained by tensoring $\pi_j$ with $P_k$. Since $P_k$ is flat in the sense of Definition \ref{def:flat_object}, the functor $P_k\wot-$ sends monomorphisms to monomorphisms. The functor $P_k\wot-$ also preserves every epimorphisms because it has a right adjoint. This ensures that $\id_{P_k}\wot \pi_j$ is a monomorphism and an epimorphism. By Lemma \ref{lem:dual_FréchetEM}, the rows of the commutative squares \eqref{eq:square1_main_th} and \eqref{eq:square2_main_th} are bijective. Since $B=\Coker(P_1\to P_0)$, we have a commutative diagram 
\begin{equation*}
\begin{tikzcd}
0 \ar[r]
&
\Hom_R(B,\varinjlim_{j\in\N} F_j^\vee)
\ar[r] \ar[d] 
&
\Hom_R(P_0,\varinjlim_{j\in\N} F_j^\vee)
\ar[r] \ar[d] 
&
\Hom_R(P_1,\varinjlim_{j\in\N} F_j^\vee)
\ar[d] 
\\
0 \ar[r]
&
\Hom_R(B, F^\vee)
\ar[r]
&
\Hom_R(P_0,F^\vee)
\ar[r] 
&
\Hom_R(P_1, F^\vee)
\end{tikzcd}
\end{equation*}
with exact rows where the two vertical arrows on the right are bijective for what we observed so far. Therefore, by exactness, also the vertical arrow on the left
\begin{equation*}
\Hom_R\big( B,\varinjlim_{j\in\N} F_j^\vee \big)
\to 
\Hom_R(B,F^\vee)
\end{equation*}
is bijective. This concludes the proof.
\end{proof}

\begin{remark}
If we replace the functor $(-)^\vee=\ihom_R(-,R)$ in Lemma \ref{lem:dual_FréchetEM} and Theorem \ref{th:iso_dual_Fréchet} with $\ihom_R(-,M)$, where $M$ is any Banach $R$-module, the same proofs still work. In this way we get that a canonical isomorphism
\begin{equation*}
\varinjlim_{j\in\N} \ihom_R(F_j,M) \cong \ihom_R(F,M)
\end{equation*}
for any Banach $R$-module $M$ and any Fréchet $R$-module $F$ of type EM.
\end{remark}

\subsection{Duality for sequential modules}

Now we prove that many sequential modules are reflexive and the duality interchange sequential modules of type $\kappa$ with sequential modules of type $\lambda$.

\begin{theorem}\label{th:duality_lambda-kappa}
Let $\rho:\N\times \N \to \Rp\,\ (j,n)\mapsto \rho_j(n)$ be a function. Suppose that 
\begin{gather*}
\sum_{n=0}^\infty \frac{\rho_j(n)}{\rho_{j+1}(n)}<\infty
\\
\Big(\, \lim_{n\to\infty} \frac{\rho_j(n)}{\rho_{j+1}(n)} = 0
\quad \textrm{in the non-archimedean case.}\, \Big)
\end{gather*}
Then, the modules 
\begin{align*}
\lambda(\rho)&:=\varprojlim_{j\in\N}\ell^1_*(\rho_j),
\\
\kappa(\rho)&:=\varinjlim_{j\in\N}\ell^1_*(\rho_j^{-1})
\end{align*}
are dual to each other. Namely, there is a pairing 
\begin{equation*}
    \kappa(\rho) \wot \lambda(\rho) \to R
\end{equation*}
such that the induced canonical morphisms
\begin{align*}
&\lambda(\rho) \to \kappa(\rho)^\vee, \\
&\kappa(\rho) \to \lambda(\rho)^\vee
\end{align*}
are isomorphisms. In particular $\lambda(\rho)$ and $\kappa(\rho)$ are reflexive bornological $R$-modules.
\end{theorem}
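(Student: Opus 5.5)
We take $S=\{\rho_j:j\in\N\}$ and $\rho_j\mapsto\rho_{j+1}$ as the map $\rho\mapsto\rho_+$ of Lemma \ref{lem:prodc_coprodc_exchange_in_lambda-kappa}. The hypothesis gives $\rho_j(n)/\rho_{j+1}(n)\to 0$. So after rescaling each $\rho_{j+1}$ by a positive constant and discarding finitely many $n$ in the order relation, we may assume $\rho_j\le\rho_{j+1}$ pointwise and $\rho_j<\rho_{j+1}$ in the order of Definition \ref{def:sequential_modules}. Such rescalings replace each $\ell^1_*$, $\ell^\infty$ by an isomorphic Banach module and do not change the limits and colimits.

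We define the pairing by $\langle (a_n),(b_n)\rangle=\sum_n a_nb_n$. On each $\ell^1_*(\rho_j^{-1})\wot\ell^1_*(\rho_j)$ this is bounded, being the composite $\ell^1_*(\rho_j^{-1})\to\ell^\infty(\rho_j^{-1})\cong\ell^1_*(\rho_j)^\vee$ followed by evaluation. On $\kappa(\rho)\wot\lambda(\rho)=\indlim_j \ell^1_*(\rho_j^{-1})\wot\lambda(\rho)$ it is obtained by composing with the projections $\pi_j$, and these maps are compatible in $j$.

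\textbf{Step 1: $\lambda(\rho)\cong\kappa(\rho)^\vee$.} By \eqref{eq:dual_of_k(S)} we have $\kappa(\rho)^\vee\cong\varprojlim_j\ell^\infty(\rho_j)$, because $\ihom_R(-,R)$ turns the colimit into a limit. Lemma \ref{lem:prodc_coprodc_exchange_in_lambda-kappa} identifies this limit with $\lambda(\rho)$. We then check on basis vectors $e_n,\check e_n$ that the composite isomorphism is the map induced by the pairing.

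\textbf{Step 2: $\kappa(\rho)\cong\lambda(\rho)^\vee$.} By Proposition \ref{pr:EM-lambdas}, $\lambda(\rho)$ is Fréchet of type EM. Theorem \ref{th:iso_dual_Fréchet} then gives $\lambda(\rho)^\vee\cong\varinjlim_j\ell^1_*(\rho_j)^\vee\cong\varinjlim_j\ell^\infty(\rho_j^{-1})$, using \eqref{eq:interaction_contr_(co)prod_and_monoidal_str}. The inductive half of Lemma \ref{lem:prodc_coprodc_exchange_in_lambda-kappa} identifies the last colimit with $\kappa(\rho)$. Again we check on basis elements that this isomorphism is the one induced by the pairing.

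This step is the main obstacle. The real content is that duality sends the countable limit defining $\lambda(\rho)$ to a colimit, which is exactly what Theorem \ref{th:iso_dual_Fréchet} (via Lemma \ref{lem:dual_FréchetEM}) provides. So the work lies in checking that its hypotheses hold and that all identifications are natural.

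\textbf{Step 3: reflexivity.} We combine Steps 1 and 2. The composite $\lambda(\rho)\to\lambda(\rho)^{\vee\vee}$ factors as $\lambda(\rho)\cong\kappa(\rho)^\vee\cong(\lambda(\rho)^\vee)^\vee$, where the second isomorphism is the dual of the isomorphism in Step 2. The symmetric argument handles $\kappa(\rho)$. To see that the diagram commutes, we use the symmetry of $\wot$ and the fact that both morphisms come from the same pairing under adjunction.

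Finally, $\kappa(\rho)$ is bornological because its transition maps are injective. $\lambda(\rho)$ is bornological because it is a limit of bornological modules and $\Born_R$ is reflective in $\Ind(\Ban_R)$, so limits are computed in $\Ind(\Ban_R)$.
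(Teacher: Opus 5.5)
Your proposal is correct and follows essentially the same route as the paper: you identify $\kappa(\rho)^\vee$ with $\varprojlim_j \ell^\infty(\rho_j)\cong\lambda(\rho)$ via Lemma~\ref{lem:prodc_coprodc_exchange_in_lambda-kappa}, and $\lambda(\rho)^\vee$ with $\varinjlim_j \ell^\infty(\rho_j^{-1})\cong\kappa(\rho)$ via Proposition~\ref{pr:EM-lambdas} and Theorem~\ref{th:iso_dual_Fréchet}, then check that both isomorphisms come from the pairing $\sum_n a_n b_n$. Your preliminary rescaling (which gives $\rho_j\le\rho_{j+1}$ pointwise, as the lemma and Proposition~\ref{pr:EM-lambdas} formally require) and your explicit derivation of reflexivity from the two isomorphisms fill in details the paper leaves implicit, without changing the argument.
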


\begin{proof}
By Lemma \ref{lem:prodc_coprodc_exchange_in_lambda-kappa}, we may replace contracting coproducts with contracting products in the limit defining $\lambda(\rho)$, so that
\begin{equation*}
\lambda(\rho)
\cong
\varprojlim_{j\in\N}\ell^\infty(\rho_j).
\end{equation*}
Also the module $\kappa(\rho)^\vee$ is a projective limit of the form
\begin{equation*}
\kappa(\rho)^\vee
\cong 
\varprojlim_{j\in\N} \ell^1_*(\rho_j^{-1})^\vee
\end{equation*}
because the functor $(-)^\vee:\Ind(\Ban_R)^{\mathrm{op}} \to \Ind(\Ban_R)$ commutes with limits. For every $j\in\N$ we have the isometric isomorphism
\begin{equation*}
\psi_j : 
\prodc_{n\in\N} [Re_n]_{\rho_j(n)} 
\to 
\Big(\coprodc_{n\in\N} [Re_n]_{\rho_j(n)^{-1}}\Big)^\vee,
\quad
\sum_{n=0}^\infty x_n e_n 
\mapsto
\sum_{n=0}^\infty x_n \check{e}_n,
\end{equation*}
where $\check{e}_n:\ell^1(\rho_j^{-1})\to R$ is the projection to the $n$-th component. Since $\psi_j$ is isometric, it is easy to see that the collection $(\psi_j)_{j\in\N}$ defines an isomorphism of projective systems. If we take the limit, we obtain an isomorphism
\begin{equation*}
\psi:\lambda(\rho) \to \kappa(\rho)^\vee,
\quad
\sum_{n=0}^\infty x_n e_n \mapsto \sum_{n=0}^\infty x_n \check{e}_n .
\end{equation*}
The composition of $\psi\wot \id_{\kappa(\rho)}$ with the canonical pairing $\kappa(\rho)^\vee\wot \kappa(\rho) \to R$ produces a pairing
\begin{equation*}
g:\lambda(\rho)\wot \kappa(\rho) \to R. 
\end{equation*}
If $x\in\lambda(\rho)$ is equal to $\sum_n x_n e_n$ and $y\in\kappa(\rho)$ is given by the sum $\sum_n y_n e_n$, then, by the definition of $\psi$,
\begin{equation}\label{eq:pairing_lambda-kappa}
g(x \otimes y) = \sum_{n\in\N} x_n y_n.
\end{equation}
There is an adjoint $\psi^a:\kappa(\rho) \to \lambda(\rho)^\vee$, where $\psi^a$ is the composition of the canonical morphism $\kappa(\rho)\to\kappa(\rho)^{\vee\vee}$ with the dual $\psi^\vee$. As a map of sets, it is uniquely defined by the relation
\[
\psi^a(y)(x)=g(x\otimes y)=\psi(x)(y)
\]
for every $x\in\lambda(\rho)$ and $y\in \kappa(\rho)$. Now we verify that $\psi^a$ is an isomorphism. By Lemma \ref{lem:prodc_coprodc_exchange_in_lambda-kappa}, we can interchange contracting coproducts and contracting products in the colimit defining $\kappa(\rho)$:
\[
\kappa(\rho)
\cong
\varinjlim_{j\in\N} \ell^\infty(\rho_j^{-1}). 
\]
Recall from Proposition \ref{pr:EM-lambdas} that $\lambda(\rho)$ is a Fréchet module of type EM. Then, by Theorem \ref{th:iso_dual_Fréchet}, the dual of the limit becomes the colimit of the duals. In other terms, the canonical morphism
\[
\varinjlim_{j\in\N} \ell^1_*(\rho_j)^\vee 
\to
\lambda(\rho)^\vee
\]
is an isomorphism. For every $j\in\N$, consider the isometric isomorphism
\begin{equation*}
\psi^a_j : 
\prodc_{n\in\N} [Re_n]_{\rho_j(n)^{-1}} 
\to 
\Big(\coprodc_{n\in\N} [Re_n]_{\rho_j(n)}\Big)^\vee,
\quad
\sum_{n=0}^\infty x_n e_n 
\mapsto
\sum_{n=0}^\infty x_n \check{e}_n,
\end{equation*}
As before, the collection $(\psi^a_j)_{j\in\N}$ defines an isomorphism of inductive systems
\[
\varinjlim_j\psi^a_j : 
\varinjlim_{j\in\N} \ell^\infty(\rho_j^{-1})
 \to
\varinjlim_{j\in\N} \ell^1_*(\rho_j)^\vee.
\]
By checking what the morphisms do on elements, it's easy to see that the square 
\[\begin{tikzcd}
\kappa(\rho)
\ar[r,"\psi^a"]
&
\lambda(\rho)^\vee
\\
\varinjlim_{j\in\N} \ell^\infty(\rho_j^{-1})
\ar[r,"\varinjlim_j\psi^a_j"]
\ar[u,sloped,"\sim"]
&
\varinjlim_{j\in\N} \ell^1_*(\rho_j)^\vee
\ar[u,sloped,"\sim"]
\end{tikzcd}\]
commutes, so $\psi^a$ must be an isomorphism.
\end{proof}

\subsection{Duality for countable products and coproducts}
In the previous part, we proved reflexivity for modules of sequences with growth conditions. Here we show that even the modules
\begin{equation*}
\coprod_{n\in\N} R\, e_n
\quad \text{and} \quad
\prod_{n\in\N} R\, e_n
\end{equation*}
are dual to each other.

\begin{definition}\label{def:directed_aleph_1_filtered_set_of_sequences}
For any set $I$ let $\Upsilon(I)$ be the directed ordered set consisting of functions ${\varphi:I \to \Np}$ with the order relation
\[
\varphi_1<\varphi_2 
\quad  \text{if there is a finite $J\subset I$ such that}\quad
\varphi_1(i)<\varphi_2(i)
\quad \text{for all }i\in I\setminus J.
\]
\end{definition}

\begin{lemma}
Let $I$ be a set and $\{M_i:i\in I\}$ a family of Banach $R$-modules. Let $\varphi_1,\varphi_2\in\Upsilon(I)$ be such that $\varphi_1<\varphi_2$. Then the identity map induces well-defined morphisms
\begin{align*}
&\prodc_{i\in I} [M_i]_{\varphi_1(i)^{-1}}
\to
\prodc_{i\in I} [M_i]_{\varphi_2(i)^{-1}},
\\
&\prodc_{i\in I} [M_i]_{\varphi_2(i)}
\to
\prodc_{i\in I} [M_i]_{\varphi_1(i)}.
\end{align*}
\end{lemma}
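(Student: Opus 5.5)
The plan is to check directly that the identity map is bounded between the two contracting products, by comparing the weights factor by factor.

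Since $\varphi_1<\varphi_2$, there is a finite set $J\subset I$ with $\varphi_1(i)<\varphi_2(i)$ for all $i\in I\setminus J$. I will define the constant
\[
C:=\max\Big(1,\ \max_{i\in J}\frac{\varphi_2(i)}{\varphi_1(i)},\ \max_{i\in J}\frac{\varphi_1(i)}{\varphi_2(i)}\Big),
\]
which is a finite real number because $J$ is finite and the values of $\varphi_1,\varphi_2$ lie in $\Np$. With this choice, for every $i\in I$ we have both
\[
\varphi_2(i)^{-1}\leq C\,\varphi_1(i)^{-1}
\quad\text{and}\quad
\varphi_1(i)\leq C\,\varphi_2(i).
\]
On $I\setminus J$ both inequalities already hold with constant $1$. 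On $J$ they hold by the definition of $C$.

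Now take $x=(x_i)_{i\in I}$ in $\prodc_{i\in I}[M_i]_{\varphi_1(i)^{-1}}$. Then
\[
\sup_i \varphi_2(i)^{-1}\norm{x_i}_{M_i}\leq C\sup_i\varphi_1(i)^{-1}\norm{x_i}_{M_i}<\infty.
\]
So $x$ lies in $\prodc_{i\in I}[M_i]_{\varphi_2(i)^{-1}}$, and the identity is an $R$-linear map of operator norm at most $C$. The second morphism is handled the same way: for $x$ in $\prodc_{i\in I}[M_i]_{\varphi_2(i)}$,
\[
\sup_i\varphi_1(i)\norm{x_i}_{M_i}\leq C\sup_i\varphi_2(i)\norm{x_i}_{M_i}.
\]

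Alternatively, one could phrase this categorically: each componentwise identity $[M_i]_{\varphi_1(i)^{-1}}\to[M_i]_{\varphi_2(i)^{-1}}$ has norm at most $C$, so the universal property of the contracting product in $\Banc_R$, applied after rescaling by $C$, gives the morphism. The direct estimate above is simpler, though.

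There is no real obstacle here. The only point to watch is the finite exceptional set $J$, which is absorbed into the constant $C$. This is exactly why the morphisms live in $\Ban_R$ and need not be contracting.
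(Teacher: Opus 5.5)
Your proof is correct and is essentially the paper's argument: the paper sets $C=\sup_{i\in I}\varphi_1(i)/\varphi_2(i)$, which is finite because the exceptional set is finite, and bounds each weighted sup-norm by $C$ times the other, exactly as you do. The two inequalities you list are actually the same inequality $\varphi_1(i)\leq C\,\varphi_2(i)$, so the term $\max_{i\in J}\varphi_2(i)/\varphi_1(i)$ in your $C$ is harmless but unnecessary.
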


\begin{proof}
Define the constant
\[
C:=\sup_{i\in I} \frac{\varphi_1(i)}{\varphi_2(i)}
\]
which is a finite positive real number because $\varphi_1<\varphi_2$. For any sequence $m=(m_i)_{i\in I}\in M^I$, we have
\[\begin{split}
\sup_{i\in I} \frac{\norm{m_i}}{\varphi_2(i)} 
&=
\sup_{i\in I} 
\frac{\norm{m_i}}{\varphi_1(i)} \frac{\varphi_1(i)}{\varphi_2(i)}
\\
&\leq
C\cdot \sup_{i\in I} \frac{\norm{m_i}}{\varphi_1(i)} 
\end{split}\]
and 
\[\begin{split}
\sup_{i\in I}\ \norm{m_i}\varphi_1(i)
&=
\sup_{i\in I}\ 
\norm{m_i}\varphi_2(i) \frac{\varphi_1(i)}{\varphi_2(i)}
\\
&\leq
C\cdot \sup_{i\in I}\ \norm{m_i}\varphi_2(i).
\end{split}\]
This proves the claim.
\end{proof}

\begin{prop}\label{pr:prod_of_Banach_is_colimit}
Let $I$ be a set and $\{M_i:i\in I\}$ a family of Banach $R$-modules.
Then,
\begin{equation*}
\prod_{i\in I} M_i
=
\varinjlim_{\varphi\in\Upsilon(I)} \prodc_{i\in I} [M_i]_{\varphi(i)^{-1}}.
\end{equation*}
\end{prop}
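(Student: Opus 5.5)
The plan is to verify the universal property of the product directly in $\Ind(\Ban_R)$. Write
\[
X:=\indlim_{\varphi\in\Upsilon(I)} \prodc_{i\in I} [M_i]_{\varphi(i)^{-1}}.
\]
The transition morphisms are the identity-on-elements maps from the preceding Lemma, so this is a monomorphic inductive system. Note that $\Upsilon(I)$ is directed: given $\varphi_1,\varphi_2$, the function $\max(\varphi_1,\varphi_2)+1$ dominates both. Hence $X$ is a bornological module. For each $i\in I$ and $\varphi\in\Upsilon(I)$, the projection $\prodc_{k}[M_k]_{\varphi(k)^{-1}}\to M_i$ is bounded, with norm at most $\varphi(i)$. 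These projections are compatible with the transition maps, so they induce morphisms $p_i:X\to M_i$. The claim is that $(X,(p_i)_{i\in I})$ is a product of the $M_i$ in $\Ind(\Ban_R)$, and hence also in $\Born_R$.

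Since every ind-Banach module $N=\indlim_k N_k$ satisfies $\Hom_R(N,-)=\varprojlim_k\Hom_R(N_k,-)$, it suffices to show the following: for every Banach module $B$, the map
\[
\Hom_R(B,X)\to\prod_{i\in I}\Hom_R(B,M_i),\qquad f\mapsto (p_i\circ f)_{i\in I},
\]
is bijective. Naturality in $B$ is clear, and this is exactly the presheaf description of the product.

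For $B$ Banach, the definition of morphisms in $\Ind(\Ban_R)$ gives
\[
\Hom_R(B,X)=\varinjlim_{\varphi}\Hom_R\Big(B,\prodc_{i}[M_i]_{\varphi(i)^{-1}}\Big).
\]
By the first isomorphism in \eqref{eq:interaction_contr_(co)prod_and_monoidal_str}, taking underlying sets in $\Ban_R$, the set $\Hom_R\big(B,\prodc_{i}[M_i]_{\varphi(i)^{-1}}\big)$ is the set of families $(f_i)_{i\in I}$ with $f_i\in\Hom_R(B,M_i)$ and $\sup_i \norm{f_i}/\varphi(i)<\infty$. The transition maps are inclusions of these subsets of $\prod_i\Hom_R(B,M_i)$. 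Therefore the directed colimit is simply their union inside $\prod_i\Hom_R(B,M_i)$, and the comparison map is the inclusion of this union. This gives injectivity.

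For surjectivity, take any family $(f_i)_{i\in I}$ of bounded maps and set $\varphi(i):=\lceil\norm{f_i}\rceil+1\in\Np$. Then $\sup_i\norm{f_i}/\varphi(i)\leq1$, so the family lies in the $\varphi$-th term. Hence every family is reached and the map is bijective.

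The only delicate point is the bookkeeping. One must check that the identification through \eqref{eq:interaction_contr_(co)prod_and_monoidal_str} is compatible with the transition maps and with the $p_i$. This holds because every map in sight is the identity on components. One must also remember that the product is taken in $\Ind(\Ban_R)$, not in presheaves. This causes no difficulty here: we exhibit an ind-object whose functor of points on Banach modules is the product functor, and an ind-object is determined by that functor.
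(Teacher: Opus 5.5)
Your proof is correct, and it takes a different route from the paper, which gives no argument of its own and simply cites \cite[Lemma 5.11]{fremod}.

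You verify the universal property directly on Banach test objects. Since a Banach module $B$ is compact in $\Ind(\Ban_R)$, $\Hom_R(B,X)$ is the filtered colimit of the sets $\Hom_R\big(B,\prodc_{i\in I}[M_i]_{\varphi(i)^{-1}}\big)$. Each of these is the set of families $(f_i)_{i\in I}$ with $\sup_i\norm{f_i}/\varphi(i)<\infty$. The colimit is therefore their union inside $\prod_{i\in I}\Hom_R(B,M_i)$. The choice $\varphi(i)=\lceil\norm{f_i}\rceil+1$ shows that this union is everything. Your reduction from arbitrary ind-objects to Banach test objects via $\Hom_R(N,-)=\varprojlim_k\Hom_R(N_k,-)$ is sound. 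So is the remark that the product in $\Ind(\Ban_R)$ is also the product in the full subcategory $\Born_R$.

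Your approach makes visible that only two things are used: the directedness of $\Upsilon(I)$, and the freedom to dominate $\norm{f_i}$ pointwise. The cofinite ordering of Definition~\ref{def:directed_aleph_1_filtered_set_of_sequences} plays no role in this statement; the pointwise order would produce the same ind-object. That ordering matters elsewhere. By a diagonal argument it makes $\Upsilon(I)$ $\aleph_1$-filtered when $I$ is countable, which is the property relevant to metrizability of countable products as in Proposition~\ref{pr:some_metrizable_mods}.

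Citing \cite{fremod} keeps the proposition inside that metrizability framework. Your argument is elementary, self-contained, and checkable in a few lines.
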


\begin{proof}
This is proved in \cite[Lemma 5.11]{fremod}.
\end{proof}

There is also a dual version of the previous result for the coproduct.

\begin{prop}\label{pr:coprod_of_Banach_is_limit}
Let $\{M_i:i\in I\}$ be a family of Banach $R$-modules. Then the canonical morphism
\begin{equation}\label{eq:morphism_coprod_to_limit}
\coprod_{i\in I}M_i
\to
\varprojlim_{\varphi\in\Upsilon(I)}\prodc_{i \in I}[M_i]_{\varphi(i)}
\end{equation}
is an isomorphism in $\Ind(\Ban_R)$.
\end{prop}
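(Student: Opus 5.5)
The plan is to verify the isomorphism objectwise on representables. Objects of $\Ind(\Ban_R)$ are presheaves on $\Ban_R$, so it suffices to show that for every Banach $R$-module $B$ the induced map
\[
\Hom_R\Big(B,\coprod_{i\in I}M_i\Big)\longrightarrow \Hom_R\Big(B,\varprojlim_{\varphi\in\Upsilon(I)}\prodc_{i\in I}[M_i]_{\varphi(i)}\Big)
\]
is bijective.

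First I will make the canonical morphism \eqref{eq:morphism_coprod_to_limit} explicit. For fixed $\varphi$ and $i$, the inclusion $M_i\to\prodc_{i'}[M_{i'}]_{\varphi(i')}$ is bounded with norm $\varphi(i)$. These maps are compatible with the transition maps of the preceding lemma, which are the identity on elements. They therefore induce a morphism $M_i\to\varprojlim_\varphi\prodc[M_i]_{\varphi(i)}$, and hence the morphism from the coproduct by its universal property.

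Next I compute both sides. For the left side, write the coproduct in $\Ind(\Ban_R)$ as
\[
\coprod_{i\in I}M_i\cong\indlim_{J\subseteq I\text{ finite}}\ \bigoplus_{i\in J}M_i,
\]
where each finite coproduct is taken in $\Ban_R$ and agrees with the finite product there. Since $B$ is Banach, $\Hom_R(B,-)$ commutes with this formal filtered colimit. Hence $\Hom_R(B,\coprod_i M_i)$ is the set of families $(f_i)_{i\in I}$ of bounded maps $f_i:B\to M_i$ with only finitely many $f_i\neq 0$.

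For the right side, $\Hom_R(B,-)$ commutes with limits. Combined with the universal property of the contracting product (via \eqref{eq:interaction_contr_(co)prod_and_monoidal_str}), this identifies $\Hom_R(B,\varprojlim_\varphi\cdots)$ with the set of families $(f_i)$ of bounded maps such that
\[
\sup_{i\in I}\varphi(i)\norm{f_i}<\infty\quad\text{for every }\varphi\in\Upsilon(I).
\]
Under these identifications the comparison map is the identity on families, so injectivity is immediate. A finitely supported family clearly satisfies the bound for every $\varphi$, so the map is well defined.

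The key step, and the only real content, is surjectivity. Suppose a family $(f_i)$ satisfies all the bounds but infinitely many $f_i$ are nonzero. I would choose distinct indices $i_1,i_2,\dots$ with $f_{i_k}\neq0$. Then define $\varphi\in\Upsilon(I)$ by
\[
\varphi(i_k):=\Big\lceil k/\norm{f_{i_k}}\Big\rceil,\qquad \varphi(i):=1\ \text{otherwise}.
\]
This gives $\varphi(i_k)\norm{f_{i_k}}\geq k$ for all $k$, so the supremum for this $\varphi$ is infinite, a contradiction. Hence every compatible family is finitely supported, and the map is bijective for each $B$.

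The main obstacle I expect is bookkeeping rather than analysis. One must justify carefully that $\Hom_R(B,-)$ of the limit over the directed set $\Upsilon(I)$ is exactly the set of families satisfying every bound simultaneously. This uses that all transition maps are the identity on elements, so that a compatible system of maps $B\to\prodc[M_i]_{\varphi(i)}$ is a single family $(f_i)$. Once that is in place, the argument is the diagonal choice of $\varphi$ above.
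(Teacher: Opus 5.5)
Your proposal is correct and follows essentially the same route as the paper. Both arguments test bijectivity on $\Hom_R(B,-)$ for Banach $B$, and both use the identity-on-elements transition maps to reduce a map into the limit to a single family $(f_i)$ with $\sup_i\varphi(i)\norm{f_i}<\infty$ for every $\varphi\in\Upsilon(I)$. Both then exclude infinite support by the same diagonal choice of $\varphi$ on a countable set of indices with $f_{i_k}\neq 0$.

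The one small difference is an improvement: you compute $\Hom_R(B,\coprod_i M_i)$ explicitly as the finitely supported families, so injectivity is immediate. The paper instead justifies injectivity by simply asserting that the canonical morphism is a monomorphism.
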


\begin{proof}
It is enough to prove that the induced map on hom-sets
\begin{equation}\label{eq:hom_morphism_coprod_to_limit}
\Hom_R\Big(N,\,\coprod_{i\in I}M_i\Big)\longrightarrow
\Hom_R\Big(N,\varprojlim_{\varphi\in\Upsilon(I)}\prodc_{i \in I}[M_i]_{\varphi(i)}\Big)
\end{equation}
is bijective for every Banach module $N$. It is injective because the morphism \eqref{eq:morphism_coprod_to_limit} is a monomorphism. We now check that it is surjective. Let 
\[
f:N\to\varprojlim_{\varphi\in\Upsilon(I)}\prodc_{i \in I}[M_i]_{\varphi(i)}
\]
be a morphism. By definition of the limit it is determined by the family of morphisms
\[
f^\varphi: N\to\prodc_{i \in I}[M_i]_{\varphi(i)}
\]
given by composing $f$ with the structure morphisms of the limit. Proving that the map \eqref{eq:hom_morphism_coprod_to_limit} is surjective amounts to proving that $f$ factors as a composition of the form
\begin{equation}\label{eq:cd_f_factors_lem:coprod_of_Banach_is_limit}
\begin{tikzcd}[row sep=small]
    & \coprod_{i\in J }M_i \ar[dd]\\
    N \ar[ur, dashed] \ar[dr,"f"'] & \\
    & \displaystyle\varprojlim_{\varphi\in\Upsilon(I)}\prodc_{i \in I}[M_i]_{\varphi(i)}
\end{tikzcd}
\end{equation}
for some finite subset $J$ of $I$. For every $i\in I$, let $f^\varphi_i$ be composition of $f^\varphi$ with the projection to $[M_i]_{\varphi(i)}$. In this way the operator norm of $f^\varphi$ can be computed by
\[
\norm{f^\varphi}=\sup_{i\in I}\ \norm{f_i^\varphi}.
\]
because $\ihom_R(N,-)$ commutes with contracting products (Equation \eqref{eq:interaction_contr_(co)prod_and_monoidal_str}).
Note that for all $x\in N$ the element $f_i^\varphi(x)\in M_i$ does not depend on $\varphi$ because the morphisms in the projective systems are the identity on elements. This means that there is a unique morphism ${f_i:N\to M_i}$ such that $f_i(x)=f_i^\varphi(x)$ for all $x\in N$ and all $\varphi\in\Upsilon(I)$. In particular 
\[
\norm{f_i^\varphi}=\norm{f_i}\varphi(i)
\]
where $\norm{f_i}$ does not depend on $\varphi$. Therefore
\[
\norm{f^\varphi}=\sup_{i\in I}\ \norm{f_i}\varphi(i).
\]
for all $\varphi\in\Upsilon(I)$.
If the set $J=\{i\in I: f_i\neq0\}$ is finite then $f$ comes from a morphism $N\to\coprod_{i\in J} M_i$ which is what we want. Suppose by contradiction that $\{i\in I: f_i\neq0\}$ is not finite. Then there exists a countable subset $\{i_k:k\in\N\}\subseteq I$ such that $\norm{f_{i_k}}\neq0$ for all $k\in\N$. Define a $\varphi$ as follows:
\[
\varphi(i)=\begin{cases}
    1 & \text{if $i\neq i_k$ for all $k\in\N$}, \\
    \min\{n\in\N : n\norm{f_{i_k}}>k\} & \text{if }i=i_k.
\end{cases}
\]
for every $i\in I$.
The morphism $f^\varphi$ is bounded, but
\[\begin{split}
\norm{f^\varphi}
&=
\sup_{i \in I}\ \norm{f_i}\varphi(i)
\\
&\geq
\sup_{k\in\N}\ \norm{f_{i_k}}\varphi(i_k)
\\
&\geq
\sup_{k\in\N}\ k = +\infty,
\end{split}\]
a contradiction. Therefore $\{i\in I : f_i\neq0\}$ must be finite.
\end{proof}

\begin{corollary}\label{cor:duality_prod_coprod}
The bornological $R$-modules $\prod_{i\in\N} R\, e_i$ and $\coprod_{i\in\N} R\,e_i$ are dual to each other. In more precise terms, the pairing
\[
\coprod_{i\in \N} R\,e_i \wot \prod_{i\in \N} R\, e_i 
\to R,
\quad
\sum_{0\leq i\ll\infty}\!a_i e_i\ \otimes\ \sum_{i\in\N} b_i e_i
\ \mapsto
\sum_{0\leq i \ll\infty}\! a_i b_i
\]
induces isomorphisms
\[
\coprod_{i\in \N} R\,e_i \cong \Big( \prod_{i\in \N} R\,e_i \Big)^\vee,
\qquad
\prod_{i\in \N} R\,e_i \cong \Big( \coprod_{i\in \N} R\,e_i \Big)^\vee.
\]
\end{corollary}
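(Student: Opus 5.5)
Apply Propositions \ref{pr:prod_of_Banach_is_colimit} and \ref{pr:coprod_of_Banach_is_limit} with $I=\N$ and $M_i=R\,e_i$:
\[
\prod_{i\in\N}R\,e_i\cong\varinjlim_{\varphi\in\Upsilon(\N)}\ell^\infty(\varphi^{-1}),
\qquad
\coprod_{i\in\N}R\,e_i\cong\varprojlim_{\varphi\in\Upsilon(\N)}\ell^\infty(\varphi).
\]
These have the shape of the modules $\kappa(S)$ and $\lambda(S)$ of Definition \ref{def:sequential_modules} with $S=\Upsilon(\N)$, up to the difference between $\ell^\infty$ and $\ell^1_*$. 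So the plan is to reduce to the proof of Theorem \ref{th:duality_lambda-kappa}. The catch is that $\Upsilon(\N)$ is not countable, so the Fréchet/EM machinery does not apply directly.

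\textbf{Step 1: pass to $\ell^1_*$.} The map $\varphi\mapsto\varphi_+$ with $\varphi_+(n):=(n+1)^2\varphi(n)$ is increasing and satisfies $\varphi\le\varphi_+$ and $\sum_n\varphi(n)/\varphi_+(n)<\infty$. Lemma \ref{lem:prodc_coprodc_exchange_in_lambda-kappa} then lets us replace $\ell^\infty$ by $\ell^1_*$ in both descriptions. Its proof only uses that $\varphi\mapsto\varphi_+$ is cofinal, not countability.

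\textbf{Step 2: $(\coprod R\,e_i)^\vee\cong\prod R\,e_i$.} This is the easy direction. Since $\coprod_i R\,e_i$ is a coproduct, its dual is $\prod_i (R\,e_i)^\vee\cong\prod_i R\,\check e_i$, because $\ihom_R(-,R)$ turns colimits into limits. Checking on elements, the induced map $\prod R\,e_i\to(\coprod R\,e_i)^\vee$ is exactly the adjoint of the stated pairing $\sum a_ib_i$.

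\textbf{Step 3: $(\prod R\,e_i)^\vee\cong\coprod R\,e_i$.} This is the main obstacle. I would not go through the colimit over $\Upsilon$. Instead, use that $\prod_{i\in\N}R\,e_i=\varprojlim_n\prod_{i<n}R\,e_i$ is a Fréchet module whose stages $R^n$ are finite products, with surjective transition maps. It is not of type EM, since the projections are not injective, so Theorem \ref{th:iso_dual_Fréchet} cannot be applied verbatim. Instead I would redo the argument of Lemma \ref{lem:dual_FréchetEM} directly to show that every bounded functional $\xi$ factors through some projection to $R^n$.

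Suppose no such $n$ exists. Then for each $n$ we can pick $x_n$ with vanishing first $n$ coordinates and $\xi(x_n)\neq0$. Rescaling by $2^n$ fails over a general $R$, so instead build the sequence so that $\abs{\xi(x_n)}$ is bounded below. Because all coordinates below $n$ vanish, the map $\ell^1_*(\rho)\to\prod R\,e_i$ with $e_n\mapsto x_n$ is bounded into every finite stage for any weight $\rho$. Choosing $\rho(n)\to 0$ suitably then makes $\xi\circ x$ unbounded, a contradiction.

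This gives a bijection on underlying sets. To upgrade it to an isomorphism in $\Ind(\Ban_R)$, repeat the argument of Theorem \ref{th:iso_dual_Fréchet}. Resolve a Banach module $B$ by flat projective modules $P_1\to P_0$ as in Proposition \ref{pr:proj_resol_of_Banach}. Then use Proposition \ref{pr:lim-wot-compatibility_for_metrizables} with $N=P_k$ metrizable and flat to commute $P_k\wot-$ with the countable product. Finally apply the same factorization argument to $P_k\wot\prod R\,e_i\cong\prod P_k e_i$, replacing $R$ by $P_k$, and conclude with the exact-rows diagram. A final check on elements shows that this isomorphism is induced by the stated pairing.
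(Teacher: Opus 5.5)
Your proposal is correct, but Step 3 takes a different and longer route than the paper, and it leaves your Step 1 unused.

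\textbf{How the paper does it.} The paper stays on the colimit side. After the exchange lemma (with $\varphi_+(i)=2^i\varphi(i)$; your $(n+1)^2\varphi(n)$ works equally well), one has $\prod_i R\,e_i\cong\varinjlim_{\varphi\in\Upsilon}\ell^1_*(\varphi^{-1})$. Since $\ihom_R(-,R)$ turns \emph{arbitrary} colimits into limits, this gives $\big(\prod_i R\,e_i\big)^\vee\cong\varprojlim_{\varphi}\ell^1_*(\varphi^{-1})^\vee\cong\varprojlim_{\varphi}\ell^\infty(\varphi)$. The last term is $\coprod_i R\,e_i$ by Proposition \ref{pr:coprod_of_Banach_is_limit}. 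The uncountability of $\Upsilon(\N)$ that worried you plays no role. The Fr\'echet/EM machinery is only needed to dualize a \emph{limit}, and Step 1 has already presented the product as a colimit.

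\textbf{How your route compares.} You dualize the countable limit $\varprojlim_n R^n$ instead, which forces you to reprove a variant of Lemma \ref{lem:dual_FréchetEM} and rerun the resolution argument of Theorem \ref{th:iso_dual_Fréchet}. This does work:
\begin{itemize}
\item The projections $\prod_i P_k\to P_k^n$ are split by the inclusion of the first $n$ factors. So a functional that does not factor through $P_k^n$ is nonzero on the kernel; you should say this explicitly.
\item Injectivity follows from surjectivity of the projections.
\item Proposition \ref{pr:lim-wot-compatibility_for_metrizables} gives $P_k\wot\prod_i R\,e_i\cong\prod_i P_k$.
\end{itemize}
What your route buys is independence from Propositions \ref{pr:prod_of_Banach_is_colimit}, \ref{pr:coprod_of_Banach_is_limit} and Lemma \ref{lem:prodc_coprodc_exchange_in_lambda-kappa}. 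In effect you obtain a version of Theorem \ref{th:iso_dual_Fréchet} for sequential limits whose projections are split epimorphisms rather than of type EM. The cost is that you need metrizability and flatness of the resolving modules, whereas the paper's argument is a two-line formal consequence of what it has already set up.

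\textbf{One step needs repair.} You claim the $x_n$ can be chosen so that $\abs{\xi(x_n)}$ is bounded below, but you give no construction. Over a general Banach ring none is available; for instance over $\Z_p$ no scalar increases norms, so rescaling does not help. You do not need this lower bound. Since $\xi(x_n)\neq0$, the weight $\rho(n):=2^{-n}\abs{\xi(x_n)}$ is strictly positive. The map $\ell^1_*(\rho)\to\prod_i R\,e_i$ (or $\prod_i P_k$), $e_n\mapsto x_n$, is still bounded, because each coordinate involves only finitely many $x_n$. Boundedness of $\xi\circ x$ would then force $\sup_n 2^n<\infty$, which is the contradiction you want.
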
 

\begin{proof}
Since $\ihom_R(-,R)$ has a left adjoint, it is automatic that 
\[
\prod_{i\in \N} R\,e_i \cong \Big( \coprod_{i\in \N} R\,e_i \Big)^\vee,
\]
and the isomorphism is equivalent by tensor-hom adjunction to the pairing given in the statement. We have to check that the dual of the product is the coproduct.
Let $\Upsilon$ be the set $\Upsilon(\N)$ as defined in \ref{def:directed_aleph_1_filtered_set_of_sequences}. By the two propositions \ref{pr:prod_of_Banach_is_colimit} and \ref{pr:coprod_of_Banach_is_limit} we have that
\begin{equation}\label{eq:formula1_in_proof_dual_of_prod}
\prod_{i\in \N} R\,e_i 
= 
\varinjlim_{\varphi\in\Upsilon} \prodc_{i\in \N} [R\,e_i]_{\varphi(i)^{-1}}
=
\varinjlim_{\varphi\in\Upsilon} \ell^\infty(\varphi^{-1})
\end{equation}
and
\[
\coprod_{i\in \N} R\,e_i
=
\varprojlim_{\varphi\in\Upsilon}\prodc_{i \in \N}[R\,e_i]_{\varphi(i)}
=
\varprojlim_{\varphi\in\Upsilon}\ell^\infty(\varphi).
\]
For every $\varphi\in\Upsilon$ define $\varphi_+$ as the sequence
\[
\varphi_+(i)=2^i\varphi(i),
\quad\text{for all }i\in\N.
\]
The set of sequences $\Upsilon$ satisfies the hypotheses of Lemma \ref{lem:prodc_coprodc_exchange_in_lambda-kappa} because $\varphi(i)\leq2^i\varphi(i)$ for all $i\in\N$ and
\[
\sum_{i\in\N} \frac{\varphi(i)}{\varphi_+(i)}=2<\infty.
\]
Thus, we can exchange the contracting product with the contracting coproduct in the equation \eqref{eq:formula1_in_proof_dual_of_prod}. To conclude, recall that $\ihom_R(-,R)$ sends colimits to limits and contracting coproducts to contracting products:
\[\begin{split}
\Big( \prod_{i\in \N} R\,e_i \Big)^\vee
&\cong
\ihom_R\Big( 
\varinjlim_{\varphi\in\Upsilon} \coprodc_{i\in \N} [R\,e_i]_{\varphi(i)^{-1}}
,R\Big)
\\
&\cong
\varprojlim_{\varphi\in\Upsilon} \ihom_R\Big( 
\coprodc_{i\in \N} [R\,e_i]_{\varphi(i)^{-1}}
,R\Big)
\\
&\cong
\varprojlim_{\varphi\in\Upsilon}
\prodc_{i \in \N}[R\,\check{e}_i]_{\varphi(i)}
\\
&\cong 
\coprod_{i\in \N} R\,\check{e}_i.
\end{split}\]
Here $\check{e}_i$ denotes the projection from the product to $R\,e_i$.
\end{proof}

\subsection{Nuclearity of the sequential modules}

\begin{theorem}\label{th:nuclearity_kappa-lambda}
Let $\rho:\N\times\N \to \Rp,\; (j,n)\mapsto \rho(j,n)=\rho_j(n)$ be a function satisfying 
\begin{equation}\label{eq:nuclear_condition_on_matrix_rho}
\sum_{n=0}^\infty \frac{\rho(j,n)}{\rho(j+1,n)} < \infty,
\quad
\text{for all $j\in\N$.}
\end{equation}
Then, the bornological $R$-modules
\begin{equation*}
\kappa(\rho)
\quad \text{and} \quad
\lambda(\rho)
\end{equation*}
are nuclear.
\end{theorem}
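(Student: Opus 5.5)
The plan is to treat $\kappa(\rho)$ directly via the criterion of Proposition \ref{pr:equivalent_defs_of_nuclear_ind_ban}, and $\lambda(\rho)$ via its description as a dual. Note first that condition \eqref{eq:nuclear_condition_on_matrix_rho} forces $\rho_j(n)/\rho_{j+1}(n)\to 0$. Hence $\rho_j<\rho_{j+1}$ in the order of Definition \ref{def:sequential_modules}, and $\sup_n \rho_j(n)/\rho_{j+1}(n)<\infty$. This also settles the non-archimedean case, where Proposition \ref{pr:nuclear_morph_ell} only requires the limit to vanish.

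For $\kappa(\rho)=\indlim_{j}\ell^1_*(\rho_j^{-1})$, the transition maps $\iota_{j,j+1}:\ell^1_*(\rho_j^{-1})\to\ell^1_*(\rho_{j+1}^{-1})$ are the maps $\pi_{\sigma\rho}$ of Proposition \ref{pr:nuclear_morph_ell}, taken with source weight $\rho_j^{-1}$ and target weight $\sigma=\rho_{j+1}^{-1}$. Their nuclearity is governed by
\[
\sum_{n}\frac{\rho_{j+1}(n)^{-1}}{\rho_j(n)^{-1}}=\sum_n\frac{\rho_j(n)}{\rho_{j+1}(n)}<\infty,
\]
and boundedness follows from the same estimate. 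So the inductive system has nuclear transition morphisms, and condition iii) of Proposition \ref{pr:equivalent_defs_of_nuclear_ind_ban} gives that $\kappa(\rho)$ is nuclear.

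For $\lambda(\rho)$ the system is projective, so I would pass through duality. By Theorem \ref{th:duality_lambda-kappa} we have $\lambda(\rho)\cong\kappa(\rho)^\vee$. By Lemma \ref{lem:prodc_coprodc_exchange_in_lambda-kappa} and \eqref{eq:dual_of_k(S)}, $\lambda(\rho)$ is the limit of $\ell^\infty(\rho_j)$. I would try to express $\lambda(\rho)$ as a filtered colimit of Banach modules with nuclear transitions, and verify criterion i) instead. Concretely, for a Banach module $B$ I need $B^\vee\wot\lambda(\rho)\to\ihom_R(B,\lambda(\rho))$ to be an isomorphism. The right side is $\varprojlim_j \ihom_R(B,\ell^1_*(\rho_j))$, since $\ihom_R(B,-)$ commutes with limits. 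By the Fréchet property (Proposition \ref{pr:some_metrizable_mods}) and flatness, $B^\vee\wot-$ should commute with this countable limit. Proposition \ref{pr:lim-wot-compatibility_for_metrizables} needs the module being tensored to be flat and metrizable, so I would instead resolve $B$ by $P_1\to P_0$ as in Proposition \ref{pr:proj_resol_of_Banach} and argue as in Theorem \ref{th:iso_dual_Fréchet}. With this in place, the left side becomes $\varprojlim_j B^\vee\wot\ell^1_*(\rho_j)$.

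The core step is then levelwise. Any morphism $B\to\lambda(\rho)$ composed down to $\ell^1_*(\rho_j)$ factors through $\ell^1_*(\rho_{j+1})\xrightarrow{\pi}\ell^1_*(\rho_j)$, which is nuclear by Proposition \ref{pr:nuclear_morph_ell}. By the ideal property of nuclear maps, it is therefore represented by a tensor in $B^\vee\wot\ell^1_*(\rho_j)$. Moreover, the represented tensors are compatible in $j$, since they come from one explicit formula $\sum_n (\check e_n\circ f)\otimes e_n$ with norms controlled by $\rho_j(n)/\rho_{j+1}(n)$. This yields surjectivity. For injectivity, an element of $\varprojlim_j B^\vee\wot\ell^1_*(\rho_j)$ is determined by its coefficients in $B^\vee$ along the $e_n$ (using Equation \eqref{eq:interaction_contr_(co)prod_and_monoidal_str}), and these are recovered from the associated morphism.

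I expect the commutation of $B^\vee\wot-$ with the countable limit to be the main obstacle, since $B^\vee$ need not be flat. If this fails, my fallback is to use condition i) only with $B$ ranging over the $\ell^1_*$ modules. That means working with the explicit matrix description \eqref{eq:matrix_ell1}–\eqref{eq:nuclear_matrix_ell1}, where both sides are limits of explicit contracting products and coproducts and the comparison is a direct norm estimate using $\sum_n\rho_j(n)/\rho_{j+1}(n)<\infty$. One would then reduce a general $B$ to this case via the resolution.
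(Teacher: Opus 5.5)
Your treatment of $\kappa(\rho)$ is correct and matches the paper's argument. The transition maps are the maps $\pi_{\sigma\rho}$ of Proposition~\ref{pr:nuclear_morph_ell}, and they are nuclear because $\sum_n \rho_j(n)/\rho_{j+1}(n)<\infty$.

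The argument for $\lambda(\rho)$ has a genuine gap, exactly at the step you flag. Everything depends on the identification $B^\vee\wot\lambda(\rho)\cong\varprojlim_j B^\vee\wot\ell^1_*(\rho_j)$, and your proposed remedy does not supply it. In Theorem~\ref{th:iso_dual_Fréchet} the resolution works because the flat projective $P_k$ is itself the factor tensored with the Fréchet module. Here the factor is $B^\vee$. Resolving $B$ only gives $B^\vee=\Ker(P_0^\vee\to P_1^\vee)$, where the $P_k^\vee$ are of $\ell^\infty$-type and nothing makes them flat. To pass this kernel through $-\wot\lambda(\rho)$ you would need $\lambda(\rho)$ to be flat. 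That is a consequence of the nuclearity you are trying to prove (Proposition~\ref{pr:nuclears_are_flat}), so the argument is circular.

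The fallback has the same circularity. Reducing a general $B$ to $\ell^1_*$-type modules again requires $-\wot\lambda(\rho)$ to preserve that kernel. Even for $B=\ell^1_*(X,w)$, the module $\ell^\infty(X,w^{-1})\wot\lambda(\rho)$ is not ``a limit of explicit contracting products'' until the tensor has been commuted with the limit, which is the original obstacle. Separately, criterion i) asks for an isomorphism in $\Ind(\Ban_R)$, not just a bijection on elements.

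The missing idea is a \emph{single} weight that controls $f$ at all levels $j$ simultaneously. Levelwise nuclearity of $\ell^1_*(\rho_{j+1})\to\ell^1_*(\rho_j)$ only gives a compatible family of tensors. An element of $B^\vee\wot\lambda(\rho)=\indlim_{C\in\Ban_R/\lambda(\rho)}B^\vee\wot C$ must come from one Banach module $C\to\lambda(\rho)$, and nothing in your argument produces such a $C$. The paper constructs it directly:

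\begin{enumerate}
\item Write $f_n=\check e_n\circ f$. Then $\sup_n\norm{f_n}\rho_j(n)<\infty$ for every $j$.
\item Set $\sigma(n)=\norm{f_n}$, replacing zero entries by an auxiliary weight $\sigma_0$ built from $r_j=\sup_n\rho_j(n)/\rho_{j+1}(n)$. With this choice $f$ factors through $\ell^\infty(\sigma^{-1})\to\lambda(\rho)$.
\item Make the diagonal choice $\sigma_+=\sigma/\gamma$, where $\gamma(n)=\sum_i\rho_i(n)/(2^is_i\rho_{i+1}(n))$ and $s_i=\sum_n\rho_i(n)/\rho_{i+1}(n)$. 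Then $\ell^\infty(\sigma_+^{-1})$ still maps into $\lambda(\rho)$, and $\sum_n\sigma(n)/\sigma_+(n)=\sum_n\gamma(n)=2$.
\item Hence $f$ factors through the nuclear map $\ell^\infty(\sigma^{-1})\to\ell^1_*(\sigma_+^{-1})$. So criterion iii) holds for the canonical presentation $\indlim_{\Ban_R/\lambda(\rho)}$.
\end{enumerate}

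This cofinality statement is also exactly what would justify your identification of $B^\vee\wot\lambda(\rho)$ with the limit.
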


\begin{proof}
The bornological module $\kappa:=\kappa(\rho)$ is the filtered colimit of the diagram
\[
\ell^1_*(\rho_0^{-1}) \to
\cdots \to 
\ell^1_*(\rho_j^{-1}) \to \ell^1_*(\rho_{j+1}^{-1}) 
\to \cdots.
\]
Under the condition \eqref{eq:nuclear_condition_on_matrix_rho}, the transition morphisms of the above diagram are nuclear by Proposition \ref{pr:nuclear_morph_ell}. Therefore, $\kappa$ is nuclear.

Now we prove that $\lambda:=\lambda(\rho)$ is nuclear. As an object of the category $\Ind(\Ban_R)$ it is represented by the filtered colimit
\[
\indlim_{B\in\Ban_R/\lambda}\!\! B.
\]
To prove that $\lambda$ is nuclear, it is sufficient to show that for every $B\in\Ban_R$ and every morphism $f:B\to\lambda$, there is a commutative diagram
\begin{equation}\label{eq:th:nuclearity_kappa-lambda:triangle1}
\begin{tikzcd}[row sep=small]
B' \ar[rrd] & & 
\\ 
& & \lambda 
\\
B \arrow[uu,"\tilde{f}"] \arrow[rru,"f"'] & & \\
\end{tikzcd}
\end{equation}
with $B'\in\Ban_R$ and $\tilde{f}$ nuclear. The morphism $\tilde{f}$ is produced in two steps. First, construct a commutative triangle
\begin{equation}\label{eq:th:nuclearity_kappa-lambda:triangle2}
\begin{tikzcd}
\ell^\infty(\sigma^{-1}) \ar[rr,"\iota_\sigma"] & & \lambda \\
B \arrow[u,"f^\sigma"] \arrow[rru,"f"'] & & \\
\end{tikzcd}
\end{equation}
for a sequence $\sigma:\N\to\Rp$, where $\iota_\sigma$ is the identity on elements. Then, construct another commutative triangle
\begin{equation}\label{eq:th:nuclearity_kappa-lambda:triangle3}
\begin{tikzcd}
\ell^\infty(\sigma_+^{-1}) \ar[rrd,"\iota_{\sigma_+}"] & & \\
\ell^\infty(\sigma^{-1}) 
\arrow[u,"\iota_{\sigma\sigma_+}"] \arrow[rr,"\iota_\sigma"'] 
& & \lambda 
\end{tikzcd}
\end{equation}
where all morphisms act as the identity on elements and $\sigma_+:\N\to \Rp$ is such that $\iota_{\sigma\sigma_+}$ is nuclear. Finally, combine the two commutative triangles 
\eqref{eq:th:nuclearity_kappa-lambda:triangle2} and 
\eqref{eq:th:nuclearity_kappa-lambda:triangle3} 
to produce the nuclear morphism $\tilde{f}:B \to \ell^\infty(\sigma_+^{-1})$ as the composition $\iota_{\sigma\sigma_+}\circ f^{\sigma}$. 
Recall that, by Lemma \ref{lem:prodc_coprodc_exchange_in_lambda-kappa}, 
\[
\lambda=\varprojlim_{j\in\N}\ell^\infty(\rho_j).
\]
In this manner, the collection of morphisms $B \to \lambda$ can be determined as follows:
\begin{equation}\label{eq:th:nuclearity_kappa-lambda:1}
\begin{aligned}
\ihom_R(B,\lambda)
&\cong
\varprojlim_{j\in\N} \ihom_R\big(B,\ell^\infty(\rho_j)\big)
\\
&\overset{\eqref{eq:interaction_contr_(co)prod_and_monoidal_str}}{\cong}
\varprojlim_{j\in\N} \prodc_{n\in\N}[B^\vee]_{\rho_j(n)}.
\end{aligned}
\end{equation}
The combined isomorphisms \eqref{eq:th:nuclearity_kappa-lambda:1} send a morphism ${f:B \to \lambda}$ to the sequence of morphisms $f_n: B \to R$ satisfying the conditions
\begin{gather*}
\sup_{n\in\N}\ \norm{f_n}_{B^\vee}\rho(j,n) <\infty, \qquad
\text{for all $j\in\N$,}
\\
f(x)=\sum_{n=0}^\infty f_n(x) \, e_n, \qquad \text{for all $x\in B$}
\end{gather*}
Note that the sequence $(f_n)_{n\in\N}$ automatically satisfies the stronger condition
\begin{equation*}
\sum_{n=0}^\infty \norm{f_n}_{B^\vee} \rho(j,n) <\infty,
\qquad \text{for all $j\in\N$,}
\end{equation*}
because 
\begin{equation}\label{eq:th:nuclearity_kappa-lambda:2}
\begin{split}
\sum_{n=0}^\infty \norm{f_n}_{B^\vee} \rho(j,n)
&= \sum_{n=0}^\infty \norm{f_n}_{B^\vee}\rho(j+1,n) \frac{\rho(j,n)}{\rho(j+1,n)}
\\
&\leq \Big(\sup_{n\in\N}\ \norm{f_n}_{B^\vee}\rho(j+1,n) \Big) 
\sum_{n=0}^\infty \frac{\rho(j,n)}{\rho(j+1,n)}.
\end{split}
\end{equation}
for every $j\in\N$. If we apply the isomorphisms of \eqref{eq:th:nuclearity_kappa-lambda:1} to the specific situation where $B$ is a Banach $R$-module of the form $\ell^\infty(\sigma^{-1})$, for some sequence $\sigma:\N \to \R_p$, then we obtain that the identity induces a morphism $\iota_\sigma\ell^\infty(\sigma^{-1})\to\lambda$ if and only if 
\[
\sup_{n\in\N}\ \sigma(n)\rho(j,n)<\infty, \qquad
\text{for all $j\in\N$.}
\]
Let $\Upsilon$ be the set of such sequences. By the same argument of \eqref{eq:th:nuclearity_kappa-lambda:2}, $\sigma\in\Upsilon$ if and only if
\[
\sum_{n=0}^\infty \sigma(n)\rho(j,n),
\qquad \text{for all $j\in\N$.}
\]
Fix a morphism $f:B \to \lambda$ with its corresponding sequence $(f_n)_{n\in\N}$ under \eqref{eq:th:nuclearity_kappa-lambda:1}. To produce a morphism $f^\sigma: B \to \ell^\infty(\sigma^{-1})$ as in the diagram \eqref{eq:th:nuclearity_kappa-lambda:triangle2}, it is necessary and sufficient that there is a $\sigma\in\Upsilon$ such that $\norm{f_n}_{B^\vee}$ is bounded by $\sigma(n)$ uniformly in $n\in\N$. We can not take a priori $\sigma(n):=\norm{f_n}_{B^\vee}$ because it can be that $\norm{f_n}_{B^\vee}=0$ for some $n\in\N$. Thus, we need to check first that $\Upsilon$ is non-empty, so that we can find a $\sigma_0\in\Upsilon$ and set
\[
\sigma(n)=\begin{cases}
    \norm{f_n}_{B^\vee} & \text{if }f_n\neq0,\\
    \sigma_0(n) & \text{otherwise}.
\end{cases}
\]
Define
\[
r_j:=\sup_{n\in\N} \frac{\rho(j,n)}{\rho(j+1,n)}
\]
for all $j\in\N$. It is finite by the assumption \eqref{eq:nuclear_condition_on_matrix_rho}, and it is non-zero because it is the supremum of a family of non-zero real numbers. Let $\sigma_0:\N \to \Rp$ be defined by
\[
\sigma_0(n)=\rho(n,n)^{-1}\prod_{i=0}^{n-1} r_i^{-1}
\]
for all $n\in\Np$ and $\sigma_0(n)=1$ for $n=0$. We claim that $\sigma_0\in\Upsilon$. For all $j\in\N$ and every integer $n>j$, we can rewrite $\sigma_0(n)\sigma(j,n)$ as the product
\[
\left( \prod_{i=0}^j r_i^{-1} \right)
\left( \prod_{i=j}^{n-1} r_i^{-1}\frac{\rho(i,n)}{\rho(i+1,n)} \right),
\]
where the product from $0$ to $j-1$ is assumed to equal $1$ when $j=0$. The term $r_i^{-1}\rho(i,n)\rho(i+1,n)^{-1}$ is not greater than $1$ for all $i=j,\dots n-1$ by construction. Therefore,
\[
\sigma_0(n)\sigma(j,n) \leq \prod_{i=0}^{j-1} r_i^{-1}
\]
for all integers $n>j$. In particular,
\[
\sup_{n\in\N}\ \sigma_0(n)\rho(j,n) < \infty
\]
for all $j\in\N$, so that $\sigma_0\in\Upsilon$ as claimed. So far, we have a morphism $f^\sigma: B \to \ell^\infty(\sigma^{-1})$ fitting in the commutative triangle \eqref{eq:th:nuclearity_kappa-lambda:triangle2}. By Proposition \ref{pr:nuclear_morph_ell}, in order to have the required nuclear morphism $\iota_{\sigma\sigma_+}$ in the commutative triangle \eqref{eq:th:nuclearity_kappa-lambda:triangle3}, it is sufficient that, given $\sigma\in\Upsilon$, there is a sequence $\sigma_+\in\Upsilon$ such that
\begin{equation}\label{eq:th:nuclearity_kappa-lambda:3}
\sum_{n=0}^\infty \frac{\sigma(n)}{\sigma_+(n)}<\infty.
\end{equation}
Define 
\[
s_j=\sum_{n=0} \frac{\rho(j,n)}{\rho(j+1,n)}
\]
for all $j\in\N$, and $\gamma:\N \to \Rp$ by 
\[
\gamma(n)=\sum_{i=0}^\infty \frac{\rho(i,n)}{2^is_i\rho(i+1,n)}
\]
for all $n\in\N$. We claim that the sequence $\sigma_+$ defined by 
\[
\sigma_+(n)=\frac{\sigma(n)}{\gamma(n)},
\]
for every $n\in\N$, it satisfies the required properties. We first check that $\sigma_+$ belongs to $\Upsilon$. Fix an arbitrary $j\in\N$. We have the estimate
\begin{equation}\label{eq:th:nuclearity_kappa-lambda:estimate1}
\frac{1}{\gamma(n)} \leq \frac{2^js_j\rho(j+1,n)}{\rho(j,n)}
\end{equation}
for all $n\in\N$. Hence,
\[\begin{split}
\sum_{n=0}^\infty \sigma_+(n) \rho(j,n) 
&= \sum_{n=0}^\infty \sigma(n) \frac{\rho(j,n)}{\gamma(n)}
\\
&\overset{\eqref{eq:th:nuclearity_kappa-lambda:estimate1}}{\leq}
\sum_{n=0}^\infty \sigma(n) \rho(j,n)\frac{2^js_j\rho(j+1,n)}{\rho(j,n)}
\\
&= 2^js_j \sum_{n=0}^\infty \sigma(n) \rho(j+1,n) < \infty.
\end{split}\]
Next, we verify that \eqref{eq:th:nuclearity_kappa-lambda:3} holds. By the definition of $\sigma_+$, this amounts to showing that
\begin{equation}\label{eq:th:nuclearity_kappa-lambda:4}
\sum_{n=0}^\infty \gamma(n)
=\sum_{n=0}^\infty \sum_{i=0}^\infty 
\frac{\rho(i,n)}{2^is_i\rho(i+1,n)}.
\end{equation}
Each term of the double sum is a strictly positive real number. Hence, we can interchange the order of summation in \eqref{eq:th:nuclearity_kappa-lambda:4}, obtaining
\[\begin{split}
\sum_{n=0}^\infty \gamma(n)
&= \sum_{i=0}^\infty \sum_{n=0}^\infty 
\frac{\rho(i,n)}{2^is_i\rho(i+1,n)}
\\
&= \sum_{i=0}^\infty \frac{1}{2^is_i}\sum_{n=0}^\infty \frac{\rho(i,n)}{\rho(i+1,n)}
\\
&= \sum_{i=0}^\infty \frac{1}{2^is_i}s_i
\\
&= \sum_{i=0}^\infty 2^{-i} 
= 2<\infty.
\end{split}\]
This concludes the proof.
\end{proof}

\begin{remark}
By \cite[Corollary 5.22.]{fremod} and \cite[Lemma 4.18]{fremod}, the full subcategory of nuclear objects in $\Ind(\Ban_R)$ is closed under countable products, arbitrary coproducts and finite tensor products. In particular, the bornological $R$-modules
\begin{equation*}
\coprod_{n\in\N} R\,e_n
\quad \text{and} \quad
\prod_{n\in\N} R\,e_n
\end{equation*}
are nuclear. 
\end{remark}

%% file: sections/duality.tex
In the previous section, we introduced a family of reflexive bornological modules. We now define algebraic structures on them. 

\begin{definition}
Let $\Comm(\Ind(\Ban_R))$ be the monoidal category of commutative monoids in $\Ind(\Ban_R)$ with the cocartesian monoidal structure induced by $\wot$. Let $\wot^{\op}$ be the cartesian monoidal structure induced by $\wot$ on the opposite category $\Comm(\Ind(\Ban_R))^\op$. A (commutative) \emph{bialgebra} over $R$ is a (commutative) monoid in $(\Comm(\Ind(\Ban_R))^\op,\wot^{\op})$, and a (commutative) \emph{Hopf algebra} over $R$ is a (commutative) group-object in $(\Comm(\Ind(\Ban_R))^\op,\wot^{\op})$.
\end{definition}

A bialgebra consists of an ind-Banach module $A$ with four operations
\begin{itemize}
    \item[-] \emph{multiplication} $m_A:A\wot A \to A$,
    \item[-] \emph{unit} $u_A:R \to A$,
    \item[-] \emph{comultiplication} $\Delta_A: A \to A\wot A$,
    \item[-] \emph{counit} $\varepsilon_A: A \to R$,
\end{itemize}
such that $\Delta_A$ and $\varepsilon_A$ are morphisms of monoids or, equivalently, $m_A$ and $u_A$ are morphisms of comonoids (i.e., monoids in the opposite category of ind-Banach modules). When $A$ is a Hopf algebra, there is also a morphism
\[
\alpha_A: A \to A
\]
which has the role of the inversion of the corresponding group-object in $\Aff_R$ and is called the \emph{antipode}.

Recall that for two ind-Banach modules $M,N$ there is a natural morphism 
\[
M^\vee \wot N^\vee \to (M\wot N)^\vee
\]
corresponding $\mathrm{ev}_M\wot \mathrm{ev}_N$ under the identifications
\[
\Hom_R\big( M^\vee \wot N^\vee, (M\wot N)^\vee\big)
\cong 
\Hom_R\big( M^\vee \wot N^\vee \wot M \wot N , R \big)
\cong
\Hom_R\big( M^\vee \wot M \wot N^\vee \wot N, R \big).
\]
In the same way we have a natural morphism
\[
M_1^\vee \wot \cdots \wot M_n^\vee 
\to 
\big( M_1 \wot \cdots \wot  M_n \big)^\vee
\]
for every finite family of ind-Banach modules $M_1,\dots,M_n$. When the above natural morphism is an isomorphism we'll write simply 
\[
M_1^\vee \wot \cdots \wot M_n^\vee 
\cong 
\big( M_1 \wot \cdots \wot  M_n \big)^\vee
\]

\begin{lemma}\label{lem:cond_dual-tensor}
Let $A$ be a bialgebra over $R$ (resp. a Hopf algebra). Suppose that the following holds for $A$: for every $n\in\N$, 
\begin{equation}\label{eq:cond_dual-tensors}
\underbrace{A^\vee \wot \cdots \wot A^\vee}_{\text{$n$ \rm times}}
\cong
\big( \underbrace{A \wot \cdots \wot A}_{\text{$n$ \rm times}} \big)^\vee.
\end{equation}
Then $A^\vee$ is a bialgebra (resp. Hopf algebra) with the structure morphisms given by applying $(-)^\vee$ and the isomorphism in \eqref{eq:cond_dual-tensors} to the structure morphisms of $A$. Moreover, if $A$ is reflexive as an ind-Banach $R$-module and $A^\vee$ satisfies condition \eqref{eq:cond_dual-tensors}, then
\[
A\cong A^{\vee\vee}
\]
canonically as bialgebras (resp. Hopf algebras).
\end{lemma}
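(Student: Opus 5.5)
The plan is to transport structure through the contravariant functor $(-)^\vee=\ihom_R(-,R)$ and to use the comparison maps $\mu_n: (A^\vee)^{\wot n}\to (A^{\wot n})^\vee$ to repair the failure of strong monoidality. First I set the data on $A^\vee$. The multiplication is
\[
m_{A^\vee}:=\mu_2^{-1}\circ\Delta_A^\vee\colon A^\vee\wot A^\vee\to A^\vee .
\]
The unit is $u_{A^\vee}:=\varepsilon_A^\vee$, read through $R^\vee\cong R$. The comultiplication is $\Delta_{A^\vee}:=\mu_2^{-1}\circ m_A^\vee$, the counit is $\varepsilon_{A^\vee}:=u_A^\vee$, and the antipode is $\alpha_{A^\vee}:=\alpha_A^\vee$. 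The case $n=0$ of \eqref{eq:cond_dual-tensors} is trivial, and $n=1$ is the identity, so only $n=2,3,4$ will actually be used.

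The key structural fact I need is that the maps $\mu_n$ are coherent, i.e. they make $(-)^\vee$ a lax monoidal functor $\Ind(\Ban_R)^{\op}\to\Ind(\Ban_R)$. This is the standard situation in a closed symmetric monoidal category, and it follows from the adjunction description of $\mu_n$ as induced by $\mathrm{ev}_{M_1}\wot\cdots\wot\mathrm{ev}_{M_n}$. Concretely, I will check three things:
\begin{enumerate}[(a)]
\item $\mu_3$ agrees with both $\mu_2\circ(\mu_2\wot\id)$ and $\mu_2\circ(\id\wot\mu_2)$;
\item $\mu$ is compatible with the symmetry isomorphisms;
\item $\mu$ is natural: $\mu\circ(f^\vee\wot g^\vee)=(f\wot g)^\vee\circ\mu$.
\end{enumerate}
Each identity reduces, via the adjunction $\Hom(X,Y^\vee)\cong\Hom(X\wot Y,R)$, to an equality of two maps into $R$. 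Both maps are products of evaluation pairings, so each check is a formal manipulation of symmetry isomorphisms.

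Under hypothesis \eqref{eq:cond_dual-tensors} every $\mu_n$ is invertible, so $(-)^\vee$ becomes strong monoidal on the full subcategory spanned by the tensor powers of $A$. Dualizing each commutative diagram of bialgebra axioms for $A$ (associativity, coassociativity, the unit and counit laws, the compatibility $\Delta\circ m=(m\wot m)\circ(\id\wot\sigma\wot\id)\circ(\Delta\wot\Delta)$, and the antipode identity $m\circ(\alpha\wot\id)\circ\Delta=u\circ\varepsilon$) gives a diagram of duals of tensor powers of $A$. Inserting the $\mu_n$ isomorphisms with (a)–(c) turns it into the corresponding axiom for $A^\vee$, with the roles of $m$ and $\Delta$ interchanged. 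Commutativity of $A^\vee$ comes from cocommutativity of $A$, and vice versa, so the commutative version of the statement only holds under the matching hypothesis. I expect the main obstacle to be the bookkeeping in the compatibility axiom, where $\mu_4$ must interact correctly with the middle symmetry. I would handle it by proving (b) in the form $\mu_4\circ(\id\wot\sigma\wot\id)=(\id\wot\sigma\wot\id)^\vee\circ\mu_4$ once and then applying naturality.

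For the second assertion, let $\eta\colon A\to A^{\vee\vee}$ be the canonical map, which is an isomorphism because $A$ is reflexive. Since $A^\vee$ satisfies \eqref{eq:cond_dual-tensors}, the first part makes $A^{\vee\vee}$ a bialgebra (resp. Hopf algebra). It then suffices to show $\eta$ commutes with each structure map, for instance
\[
m_{A^{\vee\vee}}\circ\mu'_2\circ\ldots\circ(\eta\wot\eta)=\eta\circ m_A ,
\]
where $\mu'_2$ is the comparison map for $A^\vee$. This follows from the naturality of $\eta$ together with the identity $\mu_2^{\vee}\circ\mu'_2\circ(\eta_M\wot\eta_N)=\eta_{M\wot N}$. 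That identity is once again an equality of two maps into $R$, both given by evaluation pairings, so it is checked through the adjunction.
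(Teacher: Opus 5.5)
Your proposal follows the paper's route: use the comparison maps to make $(-)^\vee$ monoidal on the subcategory of tensor powers of $A$, then dualize the bialgebra and antipode diagrams. It also spells out the coherence checks and the monoidality of $\eta$ needed for $A\cong A^{\vee\vee}$, both of which the paper leaves implicit. There are two typing slips to fix: the multiplication must be $m_{A^\vee}=\Delta_A^\vee\circ\mu_2$ (as written, $\mu_2^{-1}\circ\Delta_A^\vee$ does not compose, and no inverse is needed there), and the identity for the second part should read $\mu_2'\circ(\eta_M\wot\eta_N)=\mu_2^\vee\circ\eta_{M\wot N}$, or equivalently $(\mu_2^\vee)^{-1}\circ\mu_2'\circ(\eta_M\wot\eta_N)=\eta_{M\wot N}$ when $\mu_2$ is invertible.
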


\begin{proof}
Let $\mathrm{Span}(A,\wot)$ be the full-subcategory in $\Ind(\Ban_R)$ which consist of $R,A,A\wot A,A\wot A\wot A$ and so on. The condition \eqref{eq:cond_dual-tensors} ensures that the duality-functor $(-)^\vee$ is well-defined as a monoidal functor
\[
(-)^\vee:\mathrm{Span}(A,\wot)^{\rm op} \to \mathrm{Span}(A^\vee,\wot)
\]
The rest follows from the fact the structure of bialgebra or Hopf algebra can be entirely described by diagrams in $\mathrm{Span}(A,\wot)$ and that the shape of those diagrams in $\mathrm{Span}(A,\wot)^{\rm op}$ is again the shape of a structure of bialgebra with the role of multiplication (resp. unit) and comultiplication (resp. counit) interchanged. In the case of Hopf algebras, the antipodal morphism in $\mathrm{Span}(A,\wot)$ is also the antipodal morphism in $\mathrm{Span}(A,\wot)^{\rm op}$.
\end{proof}

\begin{definition}
A bialgebra $A$ over $R$ satisfying condition \eqref{eq:cond_dual-tensors} in Lemma \ref{lem:cond_dual-tensor} is called \emph{dualizable}. If $A$ is dualizable, the bialgebra $A^\vee$ is called the \emph{Cartier dual} or simply the \emph{dual} bialgebra of $A$. We say that two bialgebras $A,B$ form a \emph{dual pair} if they are dualizable and their underlying ind-Banach modules are dual to each other in such a way that the induced isomorphisms
\[
B\cong A^\vee,\quad A\cong B^\vee
\]
are isomorphisms of bialgebras.
\end{definition}

\subsection{Bialgebras of formal and analytic power series}

\begin{definition}\label{def:bialgebra_structure_of_formal_power_series}
Consider $\{s^0,s^1,s^2,\dots\}$ as a countable set of symbols. Define
\begin{align*}
R[s]&:=\coprod_{n\in\N} R\,s^n = \bigoplus_{n\in\N} R\,s^n
\\
R[[s]]&:=\prod_{n\in\N} R\,s^n.
\end{align*}
On $R[[s]]$ define the $R$-algebra structure given by the multiplication
\begin{equation*}
m : R[[s]]\wot R[[s]] \to R[[s]],
\quad
s^n\otimes s^k
\mapsto
s^{n+k},
\quad
n,k\in\N
\end{equation*}
and unit
\begin{equation*}
u : R \to R[[s]],
\quad
a \mapsto a s^0.
\end{equation*}
This is the ring of formal power series regarded as a bornological $R$-algebra. The same formulas define the unit and the multiplication polynomials on $R[s]$. On $R[[s]]$ we define a bi-algebra structure given by the comultiplication
\begin{equation*}
\Delta : R[[s]] \to R[[s]] \wot R[[s]],
\quad
s^n
\mapsto
(s\otimes 1 + 1\otimes s + s\otimes s)^n
=
\sum_{\substack{i,j,k\in\N\\ i+j+k=n}}\binom{n}{i,j,k} s^{i+k}\otimes s^{j+k}
\end{equation*}
and counit
\begin{equation*}
\varepsilon : R[[s]] \to R,
\quad
\sum_{n=0}^\infty a_n s^n 
\mapsto
a_0.
\end{equation*}
Here $\binom{n}{i,j,k}$ is the trinomial coefficient. It is given in terms of factorials by the formula
\[
\binom{n}{i,j,k}=\frac{n!}{i!j!k!},
\]
and it is the integer coefficient multiplying $X^iY^jZ^k$ in the polynomial expansion of $(X+Y+Z)^n$ inside the ring of polynomials $\Z[X,Y,Z]$. The same formulas define a bi-algebra structure also on $R[s]$. 
\end{definition}

Note that the morphisms in Definition \ref{def:bialgebra_structure_of_formal_power_series} are well-defined because 
\[
R[[s]]\wot R[[s]] \cong \prod_{(n,k)\in\N^2} R\,s^n\otimes s^k
\]
and, for morphisms sets of countable products of copies of $R$, there are natural isomorphisms
\[
\Hom_R\Big(\prod_{i\in\N} Re_i , \prod_{j\in\N} Re_j \Big)
\cong
 \prod_{j\in\N} \Hom_R\Big(\prod_{i\in\N} Re_i , Re_j \Big)
\cong 
\prod_{j\in\N} \coprod_{i\in\N}\Hom_R(Re_i,Re_j)
\]
by Corollary \ref{cor:duality_prod_coprod}.

\begin{lemma}
Let $A$ be a bialgebra over $R$. Suppose that the underlying ind-Banach module of $A$ is isomorphic to:
\begin{enumerate}[\rm i)]
    \item a countable coproduct of copies of $R$, or
    \item a countable product of copies of $R$.
\end{enumerate}
Then $A$ is dualizable and its Cartier dual has an underlying ind-Banach module isomorphic to one of the bornological modules above. Moreover, the Cartier dual of $A^\vee$ is canonically isomorphic to $A$.
In particular, the bialgebras $R[[s]]$ and $R[s]$ are reflexive, dualizable with dualizable Cartier dual.
\end{lemma}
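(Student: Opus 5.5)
The plan is to verify condition \eqref{eq:cond_dual-tensors} of Lemma \ref{lem:cond_dual-tensor} directly for $P:=\prod_{i\in\N}R$ and $C:=\coprod_{i\in\N}R$. Then I apply that lemma together with Corollary \ref{cor:duality_prod_coprod}.

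\textbf{Step 1: tensor powers stay of the same type.} For the coproduct, $\wot$ commutes with colimits in each variable because it has a right adjoint. Hence
\[
C^{\wot n}\cong \coprod_{(i_1,\dots,i_n)\in\N^n} R
\]
is again a countable coproduct of copies of $R$. For the product I use Proposition \ref{pr:lim-wot-compatibility_for_metrizables}. By Proposition \ref{pr:some_metrizable_mods}, $P$ is metrizable, and countable products of copies of $R$ (or of $R\wot R\cong R$) commute with $\wot P$. Inducting on $n$ then gives
\[
P^{\wot n}\cong \prod_{i\in\N}\big(R\wot P^{\wot(n-1)}\big)\cong \prod_{\N^n} R .
\]
To pass the product through the first factor as well, I write $P\wot M$ as $M\wot P$ with $M$ a product and use the metrizability of $P$ twice. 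So every tensor power is again a countable product, and the case $n=0$ is just $R$.

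\textbf{Step 2: the comparison maps are isomorphisms.} By Corollary \ref{cor:duality_prod_coprod}, applied after choosing a bijection $\N^n\cong\N$, we get
\[
(C^{\wot n})^\vee\cong \prod_{\N^n}R
\qquad\text{and}\qquad
(C^\vee)^{\wot n}\cong P^{\wot n}\cong \prod_{\N^n}R .
\]
Dually,
\[
(P^{\wot n})^\vee\cong \coprod_{\N^n}R\cong (P^\vee)^{\wot n}.
\]
I still have to check that these abstract isomorphisms agree with the canonical morphism $M_1^\vee\wot\cdots\wot M_n^\vee\to(M_1\wot\cdots\wot M_n)^\vee$. For this I evaluate on basis vectors: both sides send $\check e_{i_1}\otimes\cdots\otimes\check e_{i_n}$ to the functional $\check e_{(i_1,\dots,i_n)}$. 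Morphisms out of a coproduct are determined by their values on the $e_i$. For morphisms out of a product, I expect the main difficulty: agreement on the $e_i$ determines the morphism because, by Corollary \ref{cor:duality_prod_coprod}, $\Hom_R(\prod R,R)\cong\coprod\Hom_R(R,R)$, so a functional on a countable product is a finite combination of coordinate projections. The canonical map therefore agrees with the explicit isomorphism, and $A$ is dualizable.

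\textbf{Step 3: conclusion.} Lemma \ref{lem:cond_dual-tensor} now shows that $A^\vee$ is a bialgebra, and by Corollary \ref{cor:duality_prod_coprod} its underlying module is of the other type. Since $A^\vee$ is again a countable product or coproduct of copies of $R$, it satisfies \eqref{eq:cond_dual-tensors} by Steps 1–2, so it is dualizable too. The module $A$ is reflexive, because the composite $A\to A^{\vee\vee}$ is the identity on basis vectors and both dualities in the Corollary are induced by the same pairing. The second part of Lemma \ref{lem:cond_dual-tensor} then gives $A\cong A^{\vee\vee}$ as bialgebras. Taking $A=R[s]$ and $A=R[[s]]$ gives the final assertion.

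The main obstacle is Step 1 for products. It needs Proposition \ref{pr:lim-wot-compatibility_for_metrizables} in the form that commutes a countable product past a metrizable factor, and it requires the iterated tensor powers $P^{\wot k}$ to remain metrizable. They do, since by induction each is a countable product of Banach modules and so is covered by Proposition \ref{pr:some_metrizable_mods}.
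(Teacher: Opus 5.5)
Your proposal is correct and follows essentially the same route as the paper. Coproducts survive tensor powers because $\wot$ is cocontinuous, and countable products survive because the factors are metrizable (Propositions \ref{pr:lim-wot-compatibility_for_metrizables} and \ref{pr:some_metrizable_mods}); Corollary \ref{cor:duality_prod_coprod} then exchanges the two types, and Lemma \ref{lem:cond_dual-tensor} finishes. Your extra checks make explicit two points the paper's proof passes over silently: that the abstract isomorphisms coincide with the canonical comparison morphism, and that the iterated tensor powers remain metrizable.
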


\begin{proof}
Suppose that $A$ is of type i), namely 
\[
A\cong\coprod_{n\in\N} R\,e_n
\]
in $\Ind(\Ban_R)$. The tensor product commutes with coproducts and $(-)^\vee$ sends coproducts to products. The dual $A^\vee$ is a countable product of copies of $R$ and in particular it is metrizable. The tensor product $-\wot A^\vee$ commutes with countable products. Thus we get
\[\begin{split}
\underbrace{A^\vee \wot \cdots \wot A^\vee}_{k\text{ times}}
&\cong
\prod_{n_1,\dots,n_k\in\N} 
R\,\check{e}_{n_1} \otimes\cdots \otimes \check{e}_{n_k}
\\ &\cong
\left(
\coprod_{n_1,\dots,n_k\in\N} R\,e_{n_1} \otimes\cdots \otimes e_{n_k}
\right)^\vee
\\ &\cong
\big(
\underbrace{A \wot \cdots \wot A}_{k\text{ times}}
\big)^\vee
\end{split}\]
for all $k\in\Np$. This shows that if $A$ is of type i) then it is dualizable.
Suppose that $A$ is of type ii), namely 
\[
A\cong\prod_{n\in\N} R\,e_n
\]
in $\Ind(\Ban_R)$. Then $A$ is metrizable, so the functor $A\wot-$ commutes  with countable product. The dual $A^\vee$ is a countable coproduct of copies of $R$ by Corollary \ref{cor:duality_prod_coprod}. Thus we get
\[\begin{split}
\underbrace{A^\vee \wot \cdots \wot A^\vee}_{k\text{ times}}
&\cong
\coprod_{n_1,\dots,n_k\in\N} 
R\,\check{e}_{n_1} \otimes\cdots \otimes \check{e}_{n_k}
\\ &\cong
\left(
\prod_{n_1,\dots,n_k\in\N} R\,e_{n_1} \otimes\cdots \otimes e_{n_k}
\right)^\vee
\\ &\cong
\big(
\underbrace{A \wot \cdots \wot A}_{k\text{ times}}
\big)^\vee
\end{split}\]
for all $k\in\Np$.
This shows that if $A$ is of type i), then it is dualizable.
Note that $(-)^\vee$ interchanges type i) and type ii). Therefore, if $A$ is of type i) or ii), then both $A$ and $A^\vee$ are dualizable, and $(A,A^\vee)$ forms a Cartier dual pair.   
\end{proof}

\begin{definition}
Consider $R[[s]]$ and $R[s]$ with the bi-algebra structure defined in \ref{def:bialgebra_structure_of_formal_power_series}. Denote the Cartier  dual bialgebras    
\begin{align*}
\Int_R[x]&:=R[[s]]^\vee, \\
\Int_R[[x]]&:= R[s]^\vee.
\end{align*}
\end{definition}

\begin{prop}\label{pr:formal_bs-ps-duality}
As bornological modules,
\begin{gather*}
 \Int_R[x]\cong\coprod_{n\in\N} R\,\binom{x}{n}
 =\bigoplus_{n\in\N} R\,\binom{x}{n},\\
 \Int_R[[x]]\cong\prod_{n\in\N} R\,\binom{x}{n},
\end{gather*}
where $\{\binom{x}{n}:n\in\N\}$ is the dual basis of $\{s^n:n\in\N\}$, namely
\begin{equation*}
\big\langle \binom{x}{n}, s^k \big\rangle
=\begin{cases}
1 & \text{if }n=k;\\
0 & \text{if }n\neq k.
\end{cases}
\end{equation*}
The structure of bialgebra $(\Delta^\vee,\varepsilon^\vee,m^\vee,u^\vee)$ is given on the basis $\{\binom{x}{n}:n\in\N\}$ by the following formulas
\begin{equation}\label{eq:formulas_Int}
\begin{aligned}
\Delta^\vee\big(\,\binom{x}{n}\otimes\binom{x}{k}\,\big)
&=\sum_{\max\{n,k\}\leq l \leq n+k } \binom{l}{n}\binom{n}{l-k} \binom{x}{l}
\\
m^\vee\big( \binom{x}{n} \big)
&=\sum_{\substack{i,j\in\N\\ i+j=n}} \binom{x}{i}\otimes\binom{x}{j}
\\
\varepsilon^\vee(1)
&=\binom{x}{0}
\\
u^\vee\big(\, \binom{x}{n} \,\big)
&=\begin{cases}
1 & \text{if }n=0;\\
0 & \text{else.}
\end{cases}
\end{aligned}
\end{equation}
\end{prop}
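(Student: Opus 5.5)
The plan has two parts: first identify the underlying modules, then transport the structure maps through duality.

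\emph{Underlying modules.} The identifications follow from the duality results already established. By Corollary \ref{cor:duality_prod_coprod}, the pairing between $\coprod_n R\,e_n$ and $\prod_n R\,e_n$ induces isomorphisms in both directions. Taking $e_n=s^n$, we get
\[
R[[s]]^\vee\cong\coprod_n R\,\check{e}_n, \qquad R[s]^\vee\cong\prod_n R\,\check{e}_n,
\]
where $\check{e}_n$ is the projection onto the $n$-th coordinate. We then simply rename $\check{e}_n=\binom{x}{n}$. The defining relation $\langle\binom{x}{n},s^k\rangle=\delta_{nk}$ is exactly the statement that the $\check{e}_n$ form the dual basis.

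\emph{Transporting the structure maps.} By the preceding lemma, both bialgebras are dualizable. Hence the identifications $(A\wot A)^\vee\cong A^\vee\wot A^\vee$ hold, and under them $\binom{x}{n}\otimes\binom{x}{k}$ is the functional dual to $s^n\otimes s^k$. Every dual structure map is then computed by evaluating against basis vectors. Because all modules involved are countable products or coproducts of copies of $R$, a morphism is determined by its matrix coefficients; this is the isomorphism of Hom-sets recorded after Definition \ref{def:bialgebra_structure_of_formal_power_series}. So for each formula it suffices to compute $\langle\Phi^\vee(\xi),v\rangle=\langle\xi,\Phi(v)\rangle$ for basis vectors $v$.

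The four maps are handled in increasing order of difficulty.
\begin{enumerate}[i)]
\item For the counit, $\langle\varepsilon^\vee(1),s^k\rangle=\varepsilon(s^k)=\delta_{0k}$, so $\varepsilon^\vee(1)=\binom{x}{0}$.
\item For the unit, $u^\vee(\binom{x}{n})=\langle\binom{x}{n},s^0\rangle=\delta_{n0}$.
\item For the multiplication, $\langle m^\vee\binom{x}{n},s^i\otimes s^j\rangle=\langle\binom{x}{n},s^{i+j}\rangle=\delta_{n,i+j}$. This gives $\sum_{i+j=n}\binom{x}{i}\otimes\binom{x}{j}$, which is a finite sum and so lies in the coproduct in both cases.
\item For the comultiplication, the coefficient of $\binom{x}{l}$ in $\Delta^\vee(\binom{x}{n}\otimes\binom{x}{k})$ is the coefficient of $s^n\otimes s^k$ in $\Delta(s^l)$. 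This equals $\binom{l}{i,j,m}$ with $i+m=n$, $j+m=k$ and $i+j+m=l$, which forces $m=n+k-l$, $i=l-k$ and $j=l-n$. Nonnegativity of these three indices gives exactly the range $\max\{n,k\}\le l\le n+k$.
\end{enumerate}

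\emph{Main obstacle.} The only real computation is the combinatorial identity needed in step iv):
\[
\frac{l!}{(l-k)!\,(l-n)!\,(n+k-l)!}=\binom{l}{n}\binom{n}{l-k},
\]
which is checked by expanding both sides into factorials. Beyond this, one should note that for fixed $(n,k)$ only finitely many $l$ contribute, so $\Delta^\vee$ is well defined on the coproduct $\Int_R[x]$ as well as on the product $\Int_R[[x]]$. One should also confirm that the matrix-coefficient description determines the maps: this holds by the reflexivity and dualizability established in the preceding lemma, together with the faithfulness of $U$ on $\Born_R$.
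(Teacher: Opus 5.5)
Your proposal is correct and follows the paper's proof essentially step by step. The underlying modules come from Corollary \ref{cor:duality_prod_coprod}, and each dual structure map is computed by pairing with the basis vectors $s^l$ (resp.\ $s^i\otimes s^j$). In particular, the coefficient of $\binom{x}{l}$ in $\Delta^\vee\bigl(\binom{x}{n}\otimes\binom{x}{k}\bigr)$ is identified as the trinomial coefficient $\frac{l!}{(l-k)!\,(l-n)!\,(n+k-l)!}$ and rewritten as $\binom{l}{n}\binom{n}{l-k}$ by the same factorial expansion.
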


\begin{proof}
We know that $(-)^\vee$ exchange countable products of copies of $R$ with countable coproducts of copies of $R$ and vice versa (Corollary \ref{cor:duality_prod_coprod}). Thus, the first part of the proposition is true, i.e.
\begin{gather*}
 \Int_R[x]\cong\coprod_{n\in\N} R\,\binom{x}{n},\\
 \Int_R[[x]]\cong\prod_{n\in\N} R\,\binom{x}{n},
\end{gather*}
where $\{\binom{x}{n}:n\in\N\}$ is dual to $\{s^n:n\in\N\}$.
We need to check that the formulas in \eqref{eq:formulas_Int} hold for the dual basis $\{\binom{x}{n}:n\in\N\}$. For $\Delta^\vee$ we can write
\begin{align*}
\Delta^\vee\big(\,\binom{x}{n}\otimes\binom{x}{k}\,\big)
&=
\sum_{l\in\N}c_l(n,k)\binom{x}{l}
\\
c_l(n,k)&=
\left\langle
\Delta^\vee\big(\,\binom{x}{n}\otimes\binom{x}{k}\,\big) , s^l
\right\rangle   
\end{align*}
for all $n,k\in\N$. Then we compute the coefficient $c_l(n,k)$:
\begin{equation*}
\begin{split}
\left\langle
\Delta^\vee\big(\,\binom{x}{n}\otimes\binom{x}{k}\,\big) , s^l
\right\rangle
&=
\left\langle
\binom{x}{n}\otimes\binom{x}{k} , \Delta(s^l)
\right\rangle
\\
&=
\left\langle
\binom{x}{n}\otimes\binom{x}{k} , 
\sum_{i,j=0}^l \binom{l}{i,j,l-i-j} s^{l-j}\otimes s^{l-i}
\right\rangle
\\
&=
\sum_{
\substack{0\leq i,j\leq l \\
l-i=n \\
l-j=k  }
} \binom{l}{i,j,l-i-j} 
\\
&=
\begin{cases}
\binom{l}{l-n,l-k,n+k-l}  & \text{if } \max\{n,k\}\leq l \leq n+k,\\
0 & \text{else}.
\end{cases}
\end{split}
\end{equation*}
If we compute $\binom{l}{l-n,l-k,n+k-l}$ in terms of factorias we get
\[
\binom{l}{l-n,l-k,n+k-l}
=
\frac{l!}{(l-n)!(l-k)!(n+k-l)!}
=\frac{l!}{(l-n)!n!}\cdot\frac{n!}{(l-k)!(n+k-l)!}.
\]
This shows that $\Delta^\vee$ satisfies the formula \eqref{eq:formulas_Int}.
For the comultiplication $m^\vee$ we have
\begin{align*}
m^\vee\big(\, \binom{x}{n} \,\big)
&=
\sum_{i,j\in\N}c_{i,j}(n)\binom{x}{i}\otimes\binom{x}{j}
\\
c_{i,j}(n)&=\left\langle 
m^\vee\big(\,\binom{x}{n}\,\big)\,,\,
s^i\otimes s^j 
\right\rangle
\end{align*}
If we compute $c_{i,j}(n)$ we get
\[
c_{i,j}(n)
=
\Big\langle 
\binom{x}{n}\,,\,
s^{i+j}
\Big\rangle
\]
which is equal to $1$ for $i+j=n$ and $0$ otherwise by definition of dual basis.
For the unit $\varepsilon^\vee(1_R)$ note that the morphism $\varepsilon:R[[s]]\to R$ is by definition the basis element $\binom{x}{0}$ because it sends $s^0$ to $1$ and every $s^n$ for $n>0$ to $0$. For the counit of $\Ind_R[x]$ we have by definition
\[
u^\vee(\binom{x}{n})=\langle \binom{x}{n}\,,\,s^0\rangle
\]
which is equivalent to the formula in \eqref{eq:formulas_Int}.
\end{proof}
The notation $\Int_R[x]$ is motivated by the fact that, if we regard $\binom{x}{n}$ as the polynomial
\[
\frac{x(x-1)(x-2)\cdots(x-n+1)}{n!},
\]
the module
\[
\coprod_{n\in\N}\Z\binom{x}{n}
\]
can be identified with the sub-ring of $\Q[x]$ consisting of the polynomial with rational coefficients that are integer valued on $\N$. Moreover, the formulas in \eqref{eq:formulas_Int} are consistent with the classical formulas for computing the binomial expansion of
\[
\binom{x}{n}\binom{x}{k} 
\quad\text{and}\quad
\binom{x+y}{n}
\]
in $\Q[x]$ and $\Q[x,y]$ respectively.

Now we consider finer analytic structures on the bi-algebras $\Int_R[x]$ and $R[[s]]$. 
\begin{definition}
For a real number $\rho>0$ define
\begin{align*}
\ps{R}{s}{\rho}&:=\coprodc_{n\in\N} [R s^n]_{\rho^n}
\\
\bs{R}{x}{\rho}&:=\prodc_{n\in\N}\Big[\,R\binom{x}{n}\,\Big]_{\rho^n}
\end{align*}
\end{definition}
The morphism $\Delta$ and $\Delta^\vee$ of $R[s]$ and $\Int_R[x]$ respectively do not extend to the $R$-modules $\ps{R}{s}{\rho}$ and $\bs{R}{x}{\rho}$ in general. On the contrary, the multiplication $m$ of $R[s]$ does extend to $\ps{R}{s}{\rho}$ because 
\[
\ps{R}{s}{\rho} \wot \ps{R}{s}{\rho}
\cong 
\coprodc_{(i,j)\in\N^2}[R\, s^i\otimes s^j]_{\rho^{i+j}}.
\]
We can compute the norm of the morphism 
\[
m:\ps{R}{s}{\rho} \wot \ps{R}{s}{\rho} \to \ps{R}{s}{\rho},
\quad
s^i \otimes s^j \mapsto s^{i+j}
\]
as the supremum
\[
\sup_{(i,j)\in\N^2} \frac{\norm{s^{i+j}}}{\norm{s^i\otimes s^j}} =
\sup_{(i,j)\in\N^2} \frac{\rho^{i+j}}{\rho^i\rho^j} =
1
\]
The unit $u:R\to \ps{R}{s}{\rho}$ is also well-defined and of norm $1$, so that $\ps{R}{s}{\rho}$ is a commutative monoid in $(\Banc_R,\wot)$.
\begin{lemma}\label{lem:points_of_the_closed_disk}
Let $A$ be any Banach $R$-algebra. Then there is a natural bijection of sets
\begin{equation}\label{eq:points_of_the_closed_disk}
\Alg_R(\ps{R}{s}{\rho},A)
\cong
\left\{
a\in A : 
\sup_{n\in\N}\ \frac{\norm{a^n}_A}{\rho^n}<\infty
\right\},
\end{equation}
where $\Alg_R(\ps{R}{s}{\rho},A)$ is the set of morphisms of monoids in $\Ban_R$. In particular, for every $a\in A$ with $\norm{a}_A\leq \rho$ there is a unique morphism $\varphi:\ps{R}{s}{\rho}\to A$ such that $\varphi(s)=a$.
If $R$ is non-archimedean and $A$ is a non-archimedean $R$-algebra then the same conclusions hold in $\Ban_R^\na$.
\end{lemma}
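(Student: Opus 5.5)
The natural approach is to use the universal property of the contracting coproduct $\ps{R}{s}{\rho}=\coprodc_{n}[Rs^n]_{\rho^n}$ in $\Banc_R$, rescaled to $\Ban_R$. By the second isomorphism in \eqref{eq:interaction_contr_(co)prod_and_monoidal_str} with $N=A$, a bounded $R$-linear map $\varphi:\ps{R}{s}{\rho}\to A$ is the same as a family $(a_n)_{n\in\N}$ of elements of $A$ with
\[
\sup_{n\in\N}\frac{\norm{a_n}_A}{\rho^n}<\infty .
\]
Here $a_n=\varphi(s^n)$, and $\ihom_R(R,A)\cong A$. Explicitly, $\varphi\big(\sum_n c_n s^n\big)=\sum_n c_n a_n$, and this series converges absolutely since $\norm{c_na_n}\le \abs{c_n}\rho^n\cdot\sup_k \norm{a_k}\rho^{-k}$.

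Next I impose the monoid conditions. Compatibility with the unit forces $a_0=1_A$. Compatibility with $m$ means $\varphi\circ m=m_A\circ(\varphi\wot\varphi)$. Both sides are bounded maps out of
\[
\ps{R}{s}{\rho}\wot\ps{R}{s}{\rho}\cong\coprodc_{(i,j)}[R\,s^i\otimes s^j]_{\rho^{i+j}},
\]
so it suffices to check equality on the generators $s^i\otimes s^j$. There the condition reads $a_{i+j}=a_ia_j$; this is the point where uniqueness of maps out of a contracting coproduct is used. By induction this gives $a_n=a^n$ with $a:=a_1$. Conversely, for any $a$ with $\sup_n\norm{a^n}\rho^{-n}<\infty$, the family $(a^n)_n$ defines a bounded linear $\varphi$ that satisfies the generator identities. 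Hence $\varphi$ is multiplicative, again by checking on generators. The assignment $\varphi\mapsto\varphi(s)$ is therefore a bijection onto the stated set. Naturality in $A$ is clear, because a morphism $A\to A'$ of Banach algebras maps $a$ to its image and preserves the bound on the powers up to the operator norm.

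For the particular case, if $\norm{a}_A\le\rho$ then submultiplicativity gives $\norm{a^n}\le\norm{a}^n\le\rho^n$ for $n\ge1$. For $n=0$ one has $\norm{1_A}$, which is a finite constant. So the supremum is at most $\max(1,\norm{1_A})$, and the first part yields the unique $\varphi$ with $\varphi(s)=a$.

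In the non-archimedean case the same argument applies with $\coprodnac$ in place of $\coprodc$. The one step that needs care is the description of bounded maps out of $\coprodnac_n[Rs^n]_{\rho^n}$ into a non-archimedean $A$. These are again given by families with $\sup_n\norm{a_n}\rho^{-n}<\infty$, since $\norm{c_na_n}\le C\abs{c_n}\rho^n\to0$ makes $\sum_n c_na_n$ converge in $A$ by the ultrametric inequality. The corresponding tensor product identification, with the max-norm, also holds. I expect this verification, that the analogue of \eqref{eq:interaction_contr_(co)prod_and_monoidal_str} holds in $\Bannac_R$, to be the only mildly delicate point; the rest is formal.
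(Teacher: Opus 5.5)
Your proposal is correct and follows essentially the same route as the paper. Both identify $\Hom_R(\ps{R}{s}{\rho},A)$ with $\prodc_{n\in\N}[A]_{\rho^{-n}}$ via \eqref{eq:interaction_contr_(co)prod_and_monoidal_str}, observe that the monoid morphisms are exactly the sequences $(1,a,a^2,\dots)$ satisfying the growth bound, and get the particular case from submultiplicativity. You additionally spell out what the paper leaves implicit: deriving $a_{i+j}=a_ia_j$ by checking on the generators $s^i\otimes s^j$, and noting that the $n=0$ term only needs $\norm{1_A}$ to be finite.
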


\begin{proof}
The set $\Alg_R(\ps{R}{s}{\rho},A)$ is a subset of the abelian group $\Hom_R(\ps{R}{s}{\rho},A)$. The latter is the underlying abelian group of the Banach $R$-module $\ihom_R(\ps{R}{s}{\rho},A)$ naturally isomorphic to
\[
\prodc_{n\in\N}[A]_{\rho^{-n}}
\]
in both $\Ban_R$ and $\Ban_R^\na$ when $R$ is non-archimedean.
Under this identification, the set $\Alg_R(\ps{R}{s}{\rho},A)$ corresponds to  the subset of sequences of the form $(1,a,a^2,a^3,\dots)$ such that 
\[
\sup_{n\in\N}\frac{\norm{a^n}_A}{\rho^n}<\infty.
\]
If $a\in A$ has norm $\norm{a}_A\leq \rho$ then 
\[
\norm{a^n}_A\leq\norm{a}_A^n\leq\rho^n.
\]
Therefore, $a$ satisfies the condition in \eqref{eq:points_of_the_closed_disk} and it corresponds to the morphism
\[
\ps{R}{s}{\rho} \to A, \quad
f(s) \mapsto f(a).
\]
\end{proof}

\begin{lemma}\label{lem:radius_for_def_of_bi-algebra_structure}
For any real number $\rho>0$, the comultiplication $\Delta:R[s]\to R[s]\wot R[s]$ extends uniquely to a well-defined morphism
\begin{equation*}
\Delta:\ps{R}{s}{2\rho+\rho^2} \to \ps{R}{s}{\rho} \wot \ps{R}{s}{\rho}.
\end{equation*}
The multiplication $\Delta^\vee:\Int_R[x] \wot \Int_R[x] \to \Int_R[x]$ extends uniquely to a well-defined morphism
\[
\Delta^\vee: \bs{R}{x}{\rho} \wot \bs{R}{x}{\rho} \to 
\bs{R}{x}{\frac{\rho^2}{2\rho +1}} 
\]
If $R$ is a non-archimedean Banach ring and $\rho\leq1$ then 
\begin{align*}
&\Delta : \ps{R}{s}{\rho} \to \ps{R}{s}{\rho} \wot \ps{R}{s}{\rho},
\\
&\Delta^\vee : \bs{R}{x}{\rho^{-1}} \wot \bs{R}{x}{\rho^{-1}} \to \bs{R}{x}{\rho^{-1}} 
\end{align*}
are well-defined in $\Ban_R^{\na}$.
\end{lemma}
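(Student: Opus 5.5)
The plan is to obtain $\Delta$ from the universal property of the closed disk in Lemma \ref{lem:points_of_the_closed_disk}, and then to get $\Delta^\vee$ by dualizing $\Delta$.

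\textbf{Step 1: $\Delta$.} The target
\[
A:=\ps{R}{s}{\rho}\wot\ps{R}{s}{\rho}\cong\coprodc_{(i,j)\in\N^2}[R\,s^i\otimes s^j]_{\rho^{i+j}}
\]
is a commutative monoid in $\Banc_R$. Its multiplication $s^i\otimes s^j\cdot s^k\otimes s^l=s^{i+k}\otimes s^{j+l}$ has norm $\le 1$, by the same computation as for $\ps{R}{s}{\rho}$. Consider the element $a:=s\otimes 1+1\otimes s+s\otimes s\in A$. It satisfies
\[
\norm{a}_A\le\rho+\rho+\rho^2=2\rho+\rho^2 .
\]
By Lemma \ref{lem:points_of_the_closed_disk} there is a unique algebra morphism $\ps{R}{s}{2\rho+\rho^2}\to A$ sending $s\mapsto a$. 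This morphism sends $s^n\mapsto a^n=(s\otimes1+1\otimes s+s\otimes s)^n$, so it agrees with $\Delta$ on $R[s]$.

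Uniqueness of the extension holds because $R[s]$ is dense in $\ps{R}{s}{2\rho+\rho^2}$, which is the completion of the span of the $s^n$. A morphism of Banach modules is therefore determined by its values on the $s^n$.

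In the non-archimedean case with $\rho\le1$, the ultrametric norm on $A$ gives $\norm{a}_A\le\max\{\rho,\rho,\rho^2\}=\rho$. The non-archimedean version of Lemma \ref{lem:points_of_the_closed_disk} then yields $\Delta:\ps{R}{s}{\rho}\to\ps{R}{s}{\rho}\wot\ps{R}{s}{\rho}$.

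\textbf{Step 2: $\Delta^\vee$ by dualizing.} By \eqref{eq:interaction_contr_(co)prod_and_monoidal_str},
\[
\ps{R}{s}{r}^\vee\cong\prodc_n[R\binom{x}{n}]_{r^{-n}}=\bs{R}{x}{r^{-1}},
\]
with $\binom{x}{n}$ dual to $s^n$. Set $\rho':=\rho^{-1}$. The natural morphism $M^\vee\wot N^\vee\to(M\wot N)^\vee$ gives
\[
\bs{R}{x}{\rho'}\wot\bs{R}{x}{\rho'}\to\big(\ps{R}{s}{\rho}\wot\ps{R}{s}{\rho}\big)^\vee .
\]
Composing this with the dual of the $\Delta$ from Step 1 lands in
\[
\ps{R}{s}{2\rho+\rho^2}^\vee=\bs{R}{x}{(2\rho+\rho^2)^{-1}} .
\]
Since $(2\rho'^{-1}+\rho'^{-2})^{-1}=\rho'^2/(2\rho'+1)$, this is exactly the radius in the statement, after renaming $\rho'$ as $\rho$.

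The resulting map satisfies
\[
\langle\Delta^\vee(\binom{x}{n}\otimes\binom{x}{k}),s^l\rangle=\langle\binom{x}{n}\otimes\binom{x}{k},\Delta(s^l)\rangle .
\]
This is the computation in Proposition \ref{pr:formal_bs-ps-duality}, so the map restricts to the given $\Delta^\vee$ on $\Int_R[x]\wot\Int_R[x]$. The non-archimedean statement follows the same way from the non-archimedean $\Delta$ of Step 1, with $\ps{R}{s}{\rho}^\vee=\bs{R}{x}{\rho^{-1}}$.

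\textbf{Main obstacle: uniqueness for $\Delta^\vee$.} The difficulty is that $\bs{R}{x}{\rho}$ is an $\ell^\infty$-type module, so $\Int_R[x]$ is not dense in it. Uniqueness therefore cannot come from density. I would first try to read "extends uniquely" as the unique extension compatible with the duality pairing. A morphism into $\bs{R}{x}{r}=\ps{R}{s}{r^{-1}}^\vee$ is determined by its coordinates $\langle-,s^l\rangle$. Those coordinates are forced to equal the functionals $\langle-,\Delta(s^l)\rangle$, which are determined by Step 1 and are bounded on $\bs{R}{x}{\rho}\wot\bs{R}{x}{\rho}$. If a stronger uniqueness is intended, I would instead restrict to morphisms continuous for the weak topology induced by the pairing with $\ps{R}{s}{\rho}\wot\ps{R}{s}{\rho}$, in which $\Int_R[x]\wot\Int_R[x]$ is dense.
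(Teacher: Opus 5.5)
Your proposal is correct and follows the paper's proof. You apply Lemma~\ref{lem:points_of_the_closed_disk} to $\Delta(s)=s\otimes1+1\otimes s+s\otimes s$ with the bound $2\rho+\rho^2$ (or $\rho$ in the ultrametric case with $\rho\leq1$), then obtain the $\Delta^\vee$ statements by dualizing through $\bs{R}{x}{\rho}\cong\big(\ps{R}{s}{\rho^{-1}}\big)^\vee$, which is all the paper does for them. Your caveat about uniqueness for $\Delta^\vee$ is well founded, and the paper's proof never addresses it. Literal uniqueness can in fact fail: over $R=\R$, let $L$ be a Banach limit of $(a_n\rho^n)_n$, which is bounded on $\bs{\R}{x}{\rho}$ and kills $\Int_\R[x]$; adding $(L\otimes L)\binom{x}{0}$ to $\Delta^\vee$ gives a second bounded extension. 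So your reading, uniqueness of the transpose of $\Delta$ among maps compatible with the pairing, is the right interpretation.
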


\begin{proof}
Since $\bs{R}{x}{\rho}\cong\big(\ps{R}{s}{\rho^{-1}}\big)^\vee$, the statements about $\Delta^\vee$ are dual to the ones about $\Delta$. This, it is enough to prove the statements about $\Delta$.
By Lemma \ref{lem:points_of_the_closed_disk} the problem is reduced to computing the norm of 
\[
\Delta(s)=s\otimes 1 + 1\otimes s + s\otimes s.
\]
in $\ps{R}{s}{\rho}\wot \ps{R}{s}{\rho}$. The norm of the elements $s^i\otimes s^j$ is $\rho^{i+j}$ for all $i,j\in\N$. Thus
\[\begin{split}
\norm{s\otimes 1 + 1\otimes s + s\otimes s}
&\leq
\norm{s\otimes 1} + \norm{1\otimes s} + \norm{s\otimes s}
\\
&\leq
2\rho + \rho^2
\end{split}\]
and the last part of Lemma \ref{lem:points_of_the_closed_disk} applies. Suppose now that $\rho\leq1$ and $R$ is non-archimedean. Then 
\[\begin{split}
\norm{s\otimes 1 + 1\otimes s + s\otimes s}
&\leq
\sup\big\{\norm{s\otimes 1} , \norm{1\otimes s} , \norm{s\otimes s} \big\}
\\
&=
\sup\{\rho,\rho^2\}
\\
&=
\rho
\end{split}\]
By Lemma \ref{lem:points_of_the_closed_disk} we can conclude that $\Delta$ is a well-defined endomorphism of the non-archimedean $R$-algebra $\ps{R}{s}{\rho}$ 
\end{proof}
For every couple of real numbers $\rho_1,\rho_2$ satisfying the inequality $\rho_1<\rho_2$, there are canonical morphisms
\begin{align*}
& \ps{R}{s}{\rho_2} \to \ps{R}{s}{\rho_1} \\
& \bs{R}{x}{\rho_2} \to \bs{R}{x}{\rho_1}
\end{align*}
which are the identity on elements.
\begin{definition}
For $\sigma\in(0,+\infty]$ define the bornological $R$-module 
\begin{equation*}
\sps{R}{s}{\sigma}:=\varprojlim_{\rho<\sigma}\ps{R}{s}{\rho},
\end{equation*}
where the limit is taken over the open interval $(0,\sigma)$ regarded as a directed set with its natural order relation.
For $\sigma\in[0,+\infty)$, define
\begin{equation*}
\dbs{R}{x}{\sigma}:=\varinjlim_{\rho>\sigma} \bs{R}{x}{\rho},
\end{equation*}
where the colimit is taken over the open interval $(\sigma,+\infty)$ regarded as a directed set with the reversed order. We'll use the convention that $\sigma^{-1}=0$ if $\sigma=+\infty$.
\end{definition}

\begin{theorem}\label{th:duality_of_the_bialgebras}
Let $\sigma\in(0,\infty]$. Assume one of the following two conditions:
\begin{enumerate}[\rm i)]
    \item $\sigma=\infty$, or 
    \item $R$ is non-archimedean and $0<\sigma\leq1$.
\end{enumerate}
Then, the formulas given in Definition \ref{def:bialgebra_structure_of_formal_power_series} for $(m,u,\Delta,\varepsilon)$ define a bialgebra structure on the bornological $R$-algebra $\sps{R}{s}{\sigma}$. Dually, the formulas valid for $(\Delta^\vee,\varepsilon^\vee,m^\vee,u^\vee)$ define a bialgebra structure on $\dbs{R}{x}{\sigma^{-1}}$. The two bialgebras $\sps{R}{s}{\sigma}$ and $\dbs{R}{x}{\sigma^{-1}}$ form a dual pair. The duality is given by the isomorphisms
\begin{align*}
&\big( \sps{R}{s}{\sigma} \big)^\vee \to \dbs{R}{x}{\sigma^{-1}},
\quad \xi \mapsto \sum_{n=0}^\infty \langle\xi,s^n\rangle\binom{x}{n}
\\
& \big( \dbs{R}{x}{\sigma^{-1}} \big)^\vee \to \sps{R}{s}{\sigma},
\quad \mu \mapsto 
\sum_{n=0}^\infty \big\langle\mu,{\textstyle\binom{x}{n}}\big\rangle \, s^n
\end{align*}
\end{theorem}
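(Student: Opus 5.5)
We reduce to Theorem \ref{th:duality_lambda-kappa} and Lemma \ref{lem:cond_dual-tensor}. Choose a strictly increasing sequence $\sigma_j\uparrow\sigma$ in $(0,\sigma)$; for $\sigma=\infty$ take $\sigma_j=j+1$. Cofinality gives
\[
\sps{R}{s}{\sigma}\cong\lambda(\rho), \qquad \rho_j(n)=\sigma_j^n .
\]
Similarly, $\dbs{R}{x}{\sigma^{-1}}\cong\varinjlim_j \ell^\infty(\rho_j^{-1})$, because $\bs{R}{x}{\sigma_j^{-1}}=\ell^\infty(\rho_j^{-1})$. The hypothesis of Theorem \ref{th:duality_lambda-kappa} holds, since $\sum_n(\sigma_j/\sigma_{j+1})^n<\infty$ (and $(\sigma_j/\sigma_{j+1})^n\to0$ in the non-archimedean case). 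Hence, by Lemma \ref{lem:prodc_coprodc_exchange_in_lambda-kappa}, $\dbs{R}{x}{\sigma^{-1}}\cong\kappa(\rho)$. Theorem \ref{th:duality_lambda-kappa} then gives the pairing $\langle s^n,\binom{x}{k}\rangle=\delta_{nk}$ and shows that the two stated maps are isomorphisms. The same argument applied to $\rho^{(k)}_j(n_1,\dots,n_k)=\sigma_j^{n_1+\cdots+n_k}$ on $\N^k$ (after reindexing $\N^k\cong\N$) still satisfies the summability hypothesis. Using \eqref{eq:interaction_contr_(co)prod_and_monoidal_str}, the $k$-fold tensor powers become $\lambda(\rho^{(k)})$ and $\kappa(\rho^{(k)})$. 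For $\kappa$ this holds because $\wot$ commutes with filtered colimits. For $\lambda$ it holds because $\lambda(\rho)$ is Fréchet, hence metrizable (Proposition \ref{pr:some_metrizable_mods}), and nuclear, hence flat (Theorem \ref{th:nuclearity_kappa-lambda}, Proposition \ref{pr:nuclears_are_flat}), so Proposition \ref{pr:lim-wot-compatibility_for_metrizables} lets $\wot$ commute with the countable limit. Applying Theorem \ref{th:duality_lambda-kappa} to $\rho^{(k)}$ then yields condition \eqref{eq:cond_dual-tensors} for both modules, so both are dualizable.

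Next we check that the structure maps are well defined on the limits. The multiplication is contracting on each $\ps{R}{s}{\rho}$, so it passes to the limit; the unit and counit are clear. For $\Delta$, Lemma \ref{lem:radius_for_def_of_bi-algebra_structure} gives the needed maps. In case ii) it gives $\ps{R}{s}{\rho}\to\ps{R}{s}{\rho}\wot\ps{R}{s}{\rho}$ for every $\rho<\sigma\le1$, hence a map of projective systems. In case i) it gives $\ps{R}{s}{2\rho+\rho^2}\to\ps{R}{s}{\rho}^{\wot2}$, and $\rho\mapsto2\rho+\rho^2$ is cofinal in $(0,\infty)$, so again we get a map into $\varprojlim_\rho\ps{R}{s}{\rho}^{\wot 2}\cong\sps{R}{s}{\sigma}^{\wot2}$. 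The bialgebra axioms hold on the dense image of $R[s]$ in each $\ps{R}{s}{\rho}$, because they hold in $R[s]$ by Definition \ref{def:bialgebra_structure_of_formal_power_series}. Two morphisms out of a Banach module that agree on a dense submodule are equal (epimorphism criterion), so the axioms hold in each level and hence in the limit.

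Finally, Lemma \ref{lem:cond_dual-tensor} makes $(\sps{R}{s}{\sigma})^\vee$ a bialgebra whose structure maps are the duals. By the pairing these are computed on the basis $\binom{x}{n}$ exactly as in Proposition \ref{pr:formal_bs-ps-duality}, i.e.\ by the formulas \eqref{eq:formulas_Int}. Therefore the isomorphism $(\sps{R}{s}{\sigma})^\vee\cong\dbs{R}{x}{\sigma^{-1}}$ transports the dual structure onto the formula-defined structure. The same argument gives well-definedness of that structure, consistent with the dual half of Lemma \ref{lem:radius_for_def_of_bi-algebra_structure}. Reflexivity together with dualizability of both sides gives the dual pair.

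The main obstacle is the identification $\lambda(\rho)^{\wot k}\cong\lambda(\rho^{(k)})$: it needs the metrizable-and-flat commutation of $\wot$ with countable limits, applied inductively (using that $\lambda(\rho^{(k-1)})$ is again Fréchet), together with the $\ell^1_*$/$\ell^\infty$ exchange on the level of the tensor products.
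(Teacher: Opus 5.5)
Your proposal is correct and follows essentially the same route as the paper. You identify $\sps{R}{s}{\sigma}$ and $\dbs{R}{x}{\sigma^{-1}}$ with $\lambda(\rho)$ and $\kappa(\rho)$ for $\rho_j(n)=\sigma_j^n$, and apply Theorem~\ref{th:duality_lambda-kappa} both to $\rho$ and to the reindexed weights $\rho^{(k)}$ on $\N^k$, using the metrizable-and-flat commutation of $\wot$ with countable limits on the $\lambda$ side. You then obtain $\Delta$ from Lemma~\ref{lem:radius_for_def_of_bi-algebra_structure}, via the cofinal reparametrization $\rho\mapsto 2\rho+\rho^2$ in case i) and level-wise in case ii). Your explicit density check of the bialgebra axioms and your transport of the dual structure through Lemma~\ref{lem:cond_dual-tensor} just spell out what the paper leaves implicit, when it says that only well-definedness needs checking and that the $\Delta^\vee$ side is ``the formal dual''.
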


\begin{proof}
The bornological $R$-module $\sps{R}{s}{\sigma}$ is the limit
\[
\varprojlim_{\rho\in(0,\sigma)} \ps{R}{s}{\rho}.  
\]
For any strictly increasing sequence of positive real numbers $(\rho_j)_{j\in\N}$ converging to $\sigma$ from below we have
\[
\sps{R}{s}{\sigma}=\varprojlim_{j\in\N}\ps{R}{s}{\rho_j}.
\]
Thus $\sps{R}{s}{\sigma}$ is identified with the sequential module $\lambda(\rho)$ for a function ${\rho:\N\times\N \to \Rp}$ defined by
\[
\rho(j,n)=\rho_j^n,
\quad
j,n\in\N.
\]
Analogously,
\[
\dbs{R}{x}{\sigma^{-1}}=\varinjlim_{j\in\N} \bs{R}{x}{\rho_j^{-1}}
\]
Note that $\rho$ satisfies the hypothesis of Lemma \ref{lem:prodc_coprodc_exchange_in_lambda-kappa} because
\begin{equation}\label{eq:radius_cond_main_theorem}
\sum_{n=0}^\infty \frac{\rho_j^n}{\rho_{j+1}^n}
=
\frac{1}{1-{\frac{\rho_j}{\rho_{j+1}}}}
<\infty
\end{equation}
Therefore $\kappa(\rho)$ can be equivalently computed by the colimit of products and we have the chain of isomorphism
\[\begin{split}
\kappa(\rho)
&\cong
\varinjlim_{j\in\N} \prodc_{n\in\N} [Re_n]_{\rho_j^{-n}}
\\ 
&\cong 
\varinjlim_{j\in\N} \prodc_{n\in\N} \left[R\binom{x}{n}\right]_{\rho_j^{-n}}
\\
&\cong
\dbs{R}{x}{\sigma^{-1}}
\end{split}\]
sending the basis element $e_n$ to $\binom{x}{n}$ for every $n\in\N$.
Theorem \ref{th:duality_lambda-kappa} applies to the modules $\kappa(\rho)$ and $\lambda(\rho)$ of this case thanks to the condition highlighted in Equation \eqref{eq:radius_cond_main_theorem}. Under the identification
\begin{align*}
&\kappa(\rho)\cong\dbs{R}{x}{\sigma^{-1}} \\
&\lambda(\rho)\cong\sps{R}{s}{\sigma}
\end{align*} 
the duality-pairing $\lambda(\rho)\wot\kappa(\rho)\to R$ induces the duality-isomorphisms given in the statement. This ensures that $\sps{R}{s}{\sigma}$ and $\dbs{R}{x}{\sigma^{-1}}$ are in duality as bornological $R$-modules. For the bialgebras structures we have to check only that $\Delta$ and $\Delta^\vee$ are well-defined on $\sps{R}{s}{\sigma}$ and $\dbs{R}{x}{\sigma^{-1}}$ and that the bialgebras obtained are dualizable. They will be automatically in duality by construction. 

Note that $\sps{R}{s}{\sigma}$ is a nuclear Fréchet $R$-module by Theorem \ref{th:nuclearity_kappa-lambda}, in particular it is flat (Proposition \ref{pr:nuclears_are_flat}). It also is a limit of flat projective Banach $R$-modules. Thus, by Proposition \ref{pr:lim-wot-compatibility_for_metrizables}, the tensor product commutes with the limit over $\N$:
\begin{equation*}
\underbrace{ 
\sps{R}{s}{\sigma} \wot \cdots \wot \sps{R}{s}{\sigma} 
}_{k\text{ times}}
\cong
\varprojlim_{j_1\in\N} \cdots \varprojlim_{j_k\in\N}
\ps{R}{s}{\rho_{j_1}}\wot \cdots \wot \ps{R}{s}{\rho_{j_k}}.
\end{equation*}
The $k$ projective limits are equivalent to the limit over $k$-tuples $(j_1.\dots, j_k)$ of natural numbers. Since $(j,j,\dots,j)_{j\in\N}$ is a cofinal sequence in $\N^k$, the limit over $\N^k$ is the same as the following limit over $\N$:
\[
\varprojlim_{j\in\N} 
\underbrace{
\ps{R}{s}{\rho_j} \wot \cdots \wot \ps{R}{s}{\rho_j} 
}_{k\text{ times}}
\]
From these considerations about the limit and the tensor product we can conclude that $\sps{R}{s}{\sigma}$ is dualizable through the following computations.
\begin{align*}
\big(\underbrace{ 
\sps{R}{s}{\sigma} \wot \cdots \wot \sps{R}{s}{\sigma} 
}_{k\text{ times}} \big)^\vee
&\cong
\Big(
\varprojlim_{j\in\N} 
\underbrace{
\ps{R}{s}{\rho_j} \wot \cdots \wot \ps{R}{s}{\rho_j} 
}_{k\text{ times}}
\Big)^\vee
\\ &\cong
\left(
\varprojlim_{j\in\N}\quad \coprodc_{n_1,\dots,n_k}
[R\,s^{n_1}\otimes\cdots\otimes s^{n_k}]_{\rho_j^{n_1+\cdots + n_k}}
\right)^\vee
\\ &\cong
\lambda\big( \rho_j^{d(n)_1+\dots+d(n)_k}:j,n\in\N \big)^\vee
\end{align*}
where $d:\N \to \N^k,\ n\mapsto (d(n)_1,\dots,d(n)_k)$ is a chosen bijection. Note that for every $j\in\N$
\[
\sum_{n=0}^\infty \left(\frac{\rho_j}{\rho_{j+1}}\right)^{d(n)_1+\dots+d(n)_k}
=
\sum_{n_1,\dots,n_k\in\N^k}\left(\frac{\rho_j}{\rho_{j+1}}\right)^{n_1+\cdots+n_k}
=
\left( \frac{1}{1-\rho_j/\rho_{j+1}}\right)^k.
\]
Thus Theorem \ref{th:duality_lambda-kappa} applies and we can continue the chain of isomorphism.
\begin{align*}
\lambda\big( \rho_j^{d(n)_1+\dots+d(n)_k}:j,n\in\N \big)^\vee
&\cong
\kappa\big( \rho_j^{-(d(n)_1+\dots+d(n)_k)}:j,n\in\N \big)
\\ &\cong
\varinjlim_{j\in\N}\quad
\coprodc_{n_1,\dots,n_k}
\left[
R\,\binom{x}{n_1}\otimes\cdots\otimes \binom{x}{n_k}
\right]_{\rho_j^{-(n_1+\cdots + n_k)}}
\\ &\cong
\varinjlim_{r_1,\dots, r_k>\sigma^{-1}}
\coprodc_{n\in\N} \Big[R\,\binom{x}{n}\Big]_{r_1^n}
\wot \cdots \wot
\coprodc_{n\in\N} \Big[R\,\binom{x}{n}\Big]_{r_k^n}
\\ &\cong
\underbrace{
\varinjlim_{r>\sigma^{-1}}\coprodc_{n\in\N} \Big[R\,\binom{x}{n}\Big]_{r^n}
\wot \cdots \wot
\varinjlim_{r>\sigma^{-1}}\coprodc_{n\in\N} \Big[R\,\binom{x}{n}\Big]_{r^n}
}_{k\text{ times}}
\\ &\cong
\underbrace{
\dbs{R}{x}{\sigma^{-1}} \wot \cdots \wot \dbs{R}{x}{\sigma^{-1}}
}_{k\text{ times.}}   
\end{align*}
Here we've used the fact that we can interchange contracting coproducts and contracting products in the definition of $\dbs{R}{x}{\sigma^{-1}}$ because the inductive system defining it has nuclear transition morphisms.
An analogous computation shows that also $\dbs{R}{x}{\sigma^{-1}}$ satisfies the condition for being a dualizable bialgebra.

Now we show that $\sps{R}{s}{\sigma}$ and $\dbs{R}{x}{\sigma^{-1}}$ are indeed a bialgebras. We have two cases depending on whether condition i) or ii) holds. If i) is true, by the first part of Lemma \ref{lem:radius_for_def_of_bi-algebra_structure} and the functoriality of the limit we obtain that $\Delta$ is well-defined as a morphism 
\[
\Delta : 
\varprojlim_{\rho\in(0,+\infty)} \ps{R}{s}{2\rho+\rho^2}
\to
\varprojlim_{\rho\in(0,+\infty)} \ps{R}{s}{\rho} \wot \ps{R}{s}{\rho}.
\]
The codomain is $\sps{R}{s}{+\infty} \wot \sps{R}{s}{+\infty}$ by the same argument above and the domain is $\sps{R}{s}{+\infty}$ because the function
\[
\rho \mapsto 2\rho +\rho^2
\]
is an increasing bijection of the directed set $(0,+\infty)$ with itself. 

If ii) is true then $\Delta$ is well-defined on $\ps{R}{s}{\rho}$ for any $\rho<\sigma$ by the second part of Lemma \ref{lem:radius_for_def_of_bi-algebra_structure}. By functoriality of $\varprojlim_{\rho<\sigma}$ we obtain 
\[
\Delta : 
\varprojlim_{\rho<\sigma} \ps{R}{s}{\rho}
\to
\varprojlim_{\rho<\sigma} \ps{R}{s}{\rho} \wot \ps{R}{s}{\rho}.
\]
The codomain is identified with the tensor product
\[
\sps{R}{s}{\sigma}\wot\sps{R}{s}{\sigma}
\]
as shown before. The identification above is natural and it acts on elements as the identity, so we get in the end $\Delta$ as a well-defined comultiplication on $\sps{R}{s}{\sigma}$.

The case of $\Delta^\vee$ and $\dbs{R}{x}{\sigma^{-1}}$ is the formal dual of $\Delta$ and $\sps{R}{s}{\sigma}$.
\end{proof}

\subsection{Binomial series as functions on positive integers}
For every $\sigma\in(0,\infty]$, there are well-defined morphisms
\begin{gather*}
\Int_R[x] \to \dbs{R}{x}{\sigma^{-1}} \to \Int_R[[x]] 
\\
R[s] \to \sps{R}{s}{\sigma} \to R[[s]]   
\end{gather*}
which are the identity on elements. In this way, we can identify the underlying $R$-module of $\dbs{R}{x}{\sigma^{-1}}$ (resp. $\sps{R}{s}{\sigma}$) with a submodule of $\Int_R[[x]]$ (resp. $R[[s]]$) made of formal series satisfying some constraints on the coefficients. Now we want to make the elements of $\Int_R[[x]]$ more explicit. Through a change of basis, we're going to see that the underlying set of $\Int_R[[x]]$ consists of all the functions $\N \to R$.
\begin{definition}
For every $n\in\N$, define 
\[
s_n:=(1+s)^n=\sum_{k=0}^n \binom{n}{k} s^k 
\]
in $R[s]$. Let $\beta$ be the unique endomorphism of $\coprod_{n\in\N}R\,e_n$ satisfying
\[
\beta(e_j)=\sum_{i=0}^j\binom{j}{i} e_i.
\]
The endomorphism $\beta$ is called \emph{binomial transform}.
\end{definition}
Writing
\[
\beta_{ij}=\langle \check{e}_i,\beta(e_j) \rangle=\binom{j}{i}
\]
for every $i,j\in\N$, we may regard $\beta$ as the change-of-basis matrix from the basis $\{s_n:n\in\N\}$ to the basis $\{s^n:n\in\N\}$ of $R[s]$. It is an upper triangular matrix with diagonal equal to the identity matrix:
\[
\beta \;\sim\;
\begin{pmatrix}
\binom{0}{0} & \binom{1}{0} & \binom{2}{0} & \binom{3}{0} & \binom{4}{0} & \cdots \\
0 & \binom{1}{1} & \binom{2}{1} & \binom{3}{1} & \binom{4}{1} & \cdots \\
0 & 0 & \binom{2}{2} & \binom{3}{2} & \binom{4}{2} & \cdots \\
0 & 0 & 0 & \binom{3}{3} & \binom{4}{3} & \cdots \\
0 & 0 & 0 & 0 & \binom{4}{4} & \cdots \\
\vdots & \vdots & \vdots & \vdots & \vdots & \ddots
\end{pmatrix}
=
\begin{pmatrix}
1 & 1 & 1 & 1 & 1 & \cdots \\
0 & 1 & 2 & 3 & 4 & \cdots \\
0 & 0 & 1 & 3 & 6 & \cdots \\
0 & 0 & 0 & 1 & 4 & \cdots \\
0 & 0 & 0 & 0 & 1 & \cdots \\
\vdots & \vdots & \vdots & \vdots & \vdots & \ddots
\end{pmatrix}
\]
It is easy to compute the inverse matrix of $\beta$ which gives the change of basis in the other direction:
\[
s^n=\sum_{k=0}^n (-1)^{n-k} \binom{n}{k}s_k.
\]
In particular, the binomial transform $\beta$ is an automorphism.
By duality, we have an isomorphism
\[
\prod_{k\in\N} R\,\check{e}_k \to \Int_R[[x]],
\quad
\check{e}_k \mapsto r_k
\]
which gives another basis $\{r_n:n\in\N\}$ for $\Int_R[[x]]$ dual to the basis $\{s_n:n\in\N\}$ of $R[s]$. The transpose of the matrixes associated with $\beta$ and $\beta^{-1}$ are the change-of-basis matrixes between the basis $\{\binom{x}{n}:n\in\N\}$ and the basis $\{r_n:n\in\N\}$. In particular,
\begin{align*}
\binom{x}{n}&=\sum_{k=n}^\infty \binom{k}{n}r_k 
\\
r_n&=\sum_{k=n}^\infty (-1)^{k-n}\binom{k}{n} \binom{x}{k}
\end{align*}
for every $n\in\N$. Having another basis for $R[s]$ and $\Int_R[[x]]$, we can describe the bialgebra structures in terms of the new basis. From the definition of the monoid structure $(m,u)$ on $R[s]$ we have
\begin{align*}
m(s_n\otimes s_k) &= s_{n+k} 
\\
u(1) &= s_0
\end{align*}
for all $n,k\in\N$. Since $(\Delta,\varepsilon)$ monoid morphisms, one has
\begin{align*}
\Delta(s_n)&=s_n\otimes s_n
\\
\varepsilon(s_n)&=1
\end{align*}
for all $n\in\N$. By duality, on $\Int_R[[x]]$ we get
\begin{align*}
\Delta^\vee(r_n\otimes r_k) &= \begin{cases}
    r_n & \text{if }n=k\\
    0 & \text{else}
\end{cases}
\\
\varepsilon^\vee(1) &= \sum_{i=0}^\infty r_i
\\
m^\vee(r_n) &= \sum_{i+j=n} r_i\otimes r_j
\\
u^\vee(r_n) &= \begin{cases}
    1 & \text{if }n=0\\
    0 & \text{else}
\end{cases}
\end{align*}
Let 
\[
f(x)=\sum_{k=0}^\infty a_k\binom{x}{k}
\]
denotes an element of $\Int_R[[x]]$. Note that for every $n\in\N$
\begin{equation}\label{eq:eval_f_on_n}
\langle f,s_n\rangle = \sum_{k=0}^n a_n\binom{n}{k}
\end{equation}
by the definition of $s_n$ in terms of the basis $\{s^k:k\in\N\}$. Observe that the right hand side of \eqref{eq:eval_f_on_n} is what we would get by evaluating $f(x)$ at $x=n$. The fact that the sum in \eqref{eq:eval_f_on_n} has only the first $n+1$ terms is consistent with that fact that $\binom{x}{k}$ vanishes on $n$ if $k>n$ when it is viewed as a polynomial with rational coefficients. If we define the value of $f(x)$ at $x=n$ by
\[
f(n):=\langle f, s_n \rangle,
\]
for every $n\in\N$, then 
\[
r_k(n)=\begin{cases}
    1 & \text{if }n=k\\
    0 & \text{else}
\end{cases}
\]
for all $n,k\in\N$. Thus, $r_k$ is the indicator function of the singleton set $\{k\}\subset \N$. We can write
\[
f(x)=\sum_{n=0}^\infty f(n) \, r_n(x)
\]
and from this we can see that the bialgebra structure of $\Int_R[[x]]$ expressed in the basis $\{r_n:n\in\N\}$ is the bialgebra structure on functions $\N\to R$ with pointwise multiplication and comultiplication given by the additive monoid $(\N,+)$. In more precise terms,
\[
\Int_R[[x]]=\prod_{n\in\N} R\,r_n
\]
as a bornological $R$-module, and for every $f,g\in\Int_R[[x]]$ we have
\begin{enumerate}[i)]
    \item $\Delta^\vee(f\otimes g)=fg$ where $(fg)(n)=f(n)g(n)$ for all $n\in\N$;
    \item $\varepsilon^\vee(1)$ is the constant function with value $1\in R$;
    \item $m^\vee(f)$ is a function $\N\times\N \to R$ defined by
    \[
    m^\vee(f)(i,j)=f(i+j)
    \]
    for all $i,j\in\N$;
    \item $u^\vee(f)=f(0)$.  
\end{enumerate}
In iii) we used the fact that
\[
\prod_{i\in\N} R\,r_i \; \wot \;  \prod_{j\in\N} R\,r_j
\; =
\prod_{(i,j)\in\N\times\N} R \, r_i\otimes r_j
\]
and the underlying set of $\prod_{(i,j)\in\N\times\N} R\,r_i\otimes r_j$ is the set of functions $\N\times\N \to R$. 

Now, given an element of $\Int_R[[x]]$ expressed as a binomial series, we reinterpret its coefficients in terms of the finite difference operator.
\begin{definition}
For every $f\in\Int_R[[x]]$ define $\fdiff{f}$ as the unique element of $\Int_R[[x]]$ such that
\[
\fdiff{f}(n)=f(n+1)-f(n)
\]
for every $n\in\N$. Extend the definition inductively by setting 
\begin{align*}
    \fdiff[0]{f}&=f,
    \\
    \fdiff[k+1]{f}&=\fdiff[k]{(\fdiff{f})},
\end{align*}
for every $k\in\N$. The assignment $f\mapsto \fdiff{f}$ defines an endomorphism of the bornological $R$-module $\Int_R[[x]]$, which we call the \emph{finite difference operator}. In general, for every $f\in\Int_R[[x]]$ and every $k\in\N$, we'll write $f(x+k)$ for the unique element of $\Int_R[[x]]$ corresponding to the function
\[
\N \to R,\quad
n \mapsto f(n+k).
\]
\end{definition}

\begin{prop}\label{pr:fdiff_and_binomial_expansion}
For every $k\in\N$ we have
\begin{equation}\label{eq:fdiff_of_binomial}
\fdiff{\binom{x}{k}}=\begin{cases}
    0 & \text{if }k=0 \\
    \binom{x}{k-1} & \text{if }k>0.
\end{cases}
\end{equation}
Moreover, for every $f\in\Int_R[[x]]$ the following formula holds:
\begin{equation}\label{eq:k-th_fdiff-formula}
    \fdiff[k]{f}(x)=\sum_{i=0}^k(-1)^{k-i} \binom{k}{i}f(x+i).
\end{equation}
In particular,
\begin{equation}\label{eq:binom_coeff_as_fdiff}
f(x)=\sum_{n=0}^\infty \fdiff[n]{f}(0)\binom{x}{n}.
\end{equation}
\end{prop}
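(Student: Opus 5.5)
The plan is to work entirely with the identification $\Int_R[[x]]=\prod_{n\in\N}R\,r_n$ of elements with functions $\N\to R$, $f(n)=\langle f,s_n\rangle$, established just above. For \eqref{eq:fdiff_of_binomial} I evaluate both sides at each $n\in\N$ and use that an element of $\Int_R[[x]]$ is determined by its values. From \eqref{eq:eval_f_on_n}, $\binom{x}{k}(n)=\langle\binom{x}{k},s_n\rangle=\binom{n}{k}$, the coefficient of $s^k$ in $(1+s)^n$. Hence $\fdiff{\binom{x}{k}}(n)=\binom{n+1}{k}-\binom{n}{k}$. By Pascal's rule this is $\binom{n}{k-1}$ for $k>0$ and $0$ for $k=0$. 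Pascal's rule holds in $\Z$, so it holds in $R$ via $\Z\to R$. This gives \eqref{eq:fdiff_of_binomial}.

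For \eqref{eq:k-th_fdiff-formula} I would argue by induction on $k$, again pointwise. The cases $k=0,1$ are the definitions. For the inductive step,
\[
\fdiff[k+1]{f}(n)=\fdiff[k]{(\fdiff{f})}(n)=\sum_{i=0}^k(-1)^{k-i}\binom{k}{i}\big(f(n+i+1)-f(n+i)\big).
\]
After reindexing, Pascal's rule gives the coefficient $(-1)^{k+1-i}\binom{k+1}{i}$ in front of $f(n+i)$. One small point is that $\fdiff[k+1]{}$ is defined as $\fdiff[k]{}$ applied to $\fdiff{f}$, so the induction hypothesis is applied to $\fdiff{f}$ and not to $f$; this fits the computation above. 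Equivalently, writing $T$ for the shift $f\mapsto f(x+1)$, we have $\fdiff{}=T-\id$, and $(T-\id)^k$ expands binomially because $T$ and $\id$ commute.

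For \eqref{eq:binom_coeff_as_fdiff} I write $f=\sum_k a_k\binom{x}{k}$, with the sum taken in the product topology (the product is the limit, and only finitely many terms are nonzero at each $n$). Since $\fdiff{}$ is a bornological endomorphism of $\prod_n R\,r_n$ and is computed pointwise, it commutes with this coefficientwise sum. Iterating \eqref{eq:fdiff_of_binomial} gives
\[
\fdiff[m]{f}=\sum_{k\geq m}a_k\binom{x}{k-m}.
\]
Evaluating at $0$ and using $\binom{0}{j}=\delta_{j0}$ yields $\fdiff[m]{f}(0)=a_m$. An alternative route is to apply \eqref{eq:k-th_fdiff-formula} at $0$ together with the inverse binomial formula $s^m=\sum_{k}(-1)^{m-k}\binom{m}{k}s_k$, since $a_m=\langle f,s^m\rangle$.

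The main obstacle is the justification that $\fdiff{}$ commutes with the infinite binomial sum. I would handle it by noting that both $\fdiff{}$ and the evaluations $f\mapsto f(n)$ depend only on finitely many coordinates $f(n),f(n+1)$. Therefore the identity can be checked value by value, where every sum is finite. The alternative route avoids the issue entirely, so I would use it as the primary argument if the first becomes delicate.
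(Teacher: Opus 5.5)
Your proposal is correct. The first two parts match the paper. The paper also evaluates at each $n\in\N$ and uses $\binom{n+1}{k}-\binom{n}{k}=\binom{n}{k-1}$. It obtains \eqref{eq:k-th_fdiff-formula} by writing the finite difference operator as $T-\id$ and expanding $(T-\id)^k$ binomially, since $T$ and $\id$ commute. Your induction with Pascal's rule is the same computation unrolled.

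The genuine difference is in \eqref{eq:binom_coeff_as_fdiff}. What you list as your alternative is exactly the paper's proof. It uses $a_n=\langle f,s^n\rangle$, the finite inverse change of basis $s^n=\sum_{i=0}^n(-1)^{n-i}\binom{n}{i}s_i$, and \eqref{eq:k-th_fdiff-formula} evaluated at $0$. That route involves only finite sums and makes \eqref{eq:binom_coeff_as_fdiff} a direct corollary of \eqref{eq:k-th_fdiff-formula}.

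Your primary route derives the formula from \eqref{eq:fdiff_of_binomial} instead. This costs you the interchange of $f\mapsto\fdiff{f}$ with an infinite binomial series. You justify that interchange correctly by checking values at each $n$, where only the terms with $k\le n+1$ contribute because $\binom{n}{k}=0$ for $k>n$. In exchange, your route yields the coefficient-shift description $\fdiff{f}=\sum_k a_{k+1}\binom{x}{k}$. That is precisely the content of the remark following the proposition in the paper, which is used there to show boundedness of the finite difference operator on $\bs{R}{x}{\rho}$.
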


\begin{proof}
Thanks to the fact that 
\[
\Int_R[[x]]=\prod_{k\in\N} R\,r_k,
\]
every part of the statement follows from classical computations valid as if we were considering $x$ as a variable ranging in $\N$. For example, we have that
\[
\binom{n+1}{k}-\binom{n}{k}
=
\frac{(n+1)!-(n+1-k)\,n!}{k!(n+1-k)!}
=
\binom{n}{k-1}
\]
for every $n\in\N$ and $k\in\Np$. Thus
\[\begin{split}
\fdiff{\binom{x}{k}} 
&= 
\sum_{n=0}^\infty \left[\binom{n+1}{k}-\binom{n}{k}\right]r_n(x) \\
&=
\sum_{n=0}^\infty \binom{n}{k-1} r_n(x) \\
&=
\binom{x}{k-1}.
\end{split}\]
Note that the finite difference operator is equal to $T-\id$, where $T$ is the endomorphism
\[
T:\Int_R[[x]] \to \Int_R[[x]],
\quad
f(x)\mapsto f(x+1).
\]
For every $k\in\N$, the element $\fdiff[k]{f}$ is equal to $(T-\id)^kf$.
Since $T$ and the identity of $\Int_R[[x]]$ commutes to each other, we can expand $(T-\id)^k$ and get
\[\begin{split}
(T-\id)^k=\sum_{i=0}^k\binom{k}{i} T^i\circ(-\id)^{k-i},
\end{split}\]
which applied to $f$ produces \eqref{eq:k-th_fdiff-formula}. In particular, for $x=0$ we have
\begin{equation}\label{eq:k-th_fdiff-formula_at_0}
\fdiff[k]{f}(0)=\sum_{i=0}^k(-1)^{k-i}\binom{k}{i}f(i).
\end{equation}
To check that \eqref{eq:binom_coeff_as_fdiff} holds, take any 
\[
f(x)=\sum_{n=0}^\infty a_n\binom{x}{n},
\]
in $\Int_R[[x]]$. Then $a_n = \langle f, s^n \rangle$ for every $n\in\N$. If we express $s^n$ in the basis $\{s_i:i\in\N\}$ we get
\[
a_n=\sum_{i=0}^n (-1)^{n-i}\binom{n}{i}\langle f, s_i \rangle
=
\sum_{i=0}^n (-1)^{n-i}\binom{n}{i} f(i)
\]
which is equal to $\fdiff[n]{f}(0)$ by Equation \eqref{eq:k-th_fdiff-formula_at_0}.
\end{proof}

Proposition \ref{pr:fdiff_and_binomial_expansion} gives a description of the underlying sets of the bornological modules $\Int_R[x]$ and $\dbs{R}{x}{\sigma}$ for $\sigma\in\R$ satisfying $\sigma\geq 0$. Let $f:\N \to R$ be a function. Then
\begin{enumerate}[i)]
    \item $f$ belongs to $\Int_R[x]$ if and only if there is $n_0\in\N$ such that $\fdiff[n]f(0)=0$ for every integer $n\geq n_0$;
    \item $f$ belongs to $\dbs{R}{x}{\sigma}$ if and only if there exists $\rho>\sigma$ such that
    \[
    \lim_{n\to\infty} \abs{\fdiff[n]{f}(0)} \rho^n = 0.
    \]
\end{enumerate}
\begin{remark}
The finite difference operator is a well-defined endomorphism of $\bs{R}{x}{\rho}$ for every $\rho\in\Rp$. Given $f\in\bs{R}{x}{\rho}$ as a series
\[
f(x)=\sum_{n=0}^\infty a_n \binom{x}{n},
\]
the element $\fdiff{f}$ is obtained by shifting the coefficients:
\[
\fdiff{f}(x)=\sum_{n=0}^\infty a_{n+1}\binom{x}{n}.
\]
The finite difference operator is bounded because
\[
\norm{\fdiff{f}}_{\bs{R}{x}{\rho}}\leq \rho^{-1} \norm{f}_{\bs{R}{x}{\rho}}
\]
as one can deduce from
\[
\abs{a_{n+1}}\rho^n
=
\rho^{-1}\abs{a_{n+1}}\rho^{n+1}.
\]
The endomorphism $f(x)\mapsto f(x+1)$ is also well-defined on $\bs{R}{x}{\rho}$ because it is the identity plus the finite difference operator. By the functoriality of colimits, we get that the above endomorphisms are well-defined also on $\dbs{R}{x}{\sigma}$ for every $\sigma\geq0$.
\end{remark}

\subsection{Hopf algebra structure}
We saw in Theorem \ref{th:duality_of_the_bialgebras} that when $R$ is non-archimedean, $\sps{R}{s}{1}$ and $\dbs{R}{x}{1}$ form a dual pair of bialgebras. In this case, they also have a well-defined antipodal morphism that gives them the structure of Hopf algebras. At first, we define the antipodal morphism on $R[[s]]$. Observe that every endomorphism $\alpha$ of $R[[s]]$ can be uniquely represented by an infinite square matrix in which each row has only finitely many non-zero entries.
In order for such an endomorphism to respect the algebra structure of $R[[s]]$, it is necessary and sufficient that
\[
\alpha(s^n) = \alpha(s)^n \quad \text{for all } n\in\N.
\]
These conditions imply that any monoid endomorphism $\alpha : R[[s]] \to R[[s]]$ is uniquely determined by the choice of a single power series $f(s) = \alpha(s)$ with vanishing constant term.
\begin{definition}
Define the morphism $\alpha:R[[s]]\to R[[s]]$ as the unique monoid endomorphism satisfying
\[
\alpha(s)=(1+s)^{-1}-1=\sum_{n=1}^{\infty}(-1)^{n} s^n.
\]
\end{definition}

\begin{lemma}\label{lem:formal_antipode}
The endomorphism $\alpha$ induces on the bialgebra $(R[[s]],m,u,\Delta.\varepsilon)$ the structure of a Hopf algebra.
\end{lemma}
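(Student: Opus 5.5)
We have to verify the antipode axiom
\[
m\circ(\alpha\wot\id)\circ\Delta \;=\; u\circ\varepsilon \;=\; m\circ(\id\wot\alpha)\circ\Delta
\]
as endomorphisms of $R[[s]]$. First we check that $\alpha$ is well defined. Since $\alpha(s)=(1+s)^{-1}-1$ has vanishing constant term, the power $\alpha(s)^n$ lies in $s^nR[[s]]$. Hence for each $k$ only the finitely many $n\le k$ contribute to the coefficient of $s^k$ in $\alpha(s^n)$, which is exactly the row-finiteness condition describing endomorphisms of $\prod_{n}R\,s^n$ recalled before the definition. So $\alpha$ is a bounded monoid endomorphism, and it is automatically $R$-linear and bounded by Corollary \ref{cor:duality_prod_coprod}.

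All maps in the antipode diagram are monoid morphisms out of the commutative monoid $R[[s]]$ into $R[[s]]$:
\begin{itemize}
\item[] $\Delta$ is a monoid morphism because $R[[s]]$ is a bialgebra;
\item[] $\alpha\wot\id$ is a monoid morphism, being a tensor product of monoid morphisms;
\item[] $m$ is a monoid morphism because $R[[s]]$ is commutative;
\item[] $u\circ\varepsilon$ is clearly a monoid morphism.
\end{itemize}
Both sides are therefore monoid endomorphisms of $R[[s]]$. As observed just before the definition, such an endomorphism is determined by the image of $s$, provided that image has zero constant term. It therefore suffices to compare the images of $s$ and to check that they have vanishing constant term.

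\textbf{Computing the images of $s$.} We have $\Delta(s)=s\otimes1+1\otimes s+s\otimes s$. Applying $\alpha\wot\id$ and then $m$ gives
\[
\alpha(s)+s+\alpha(s)s=(1+\alpha(s))(1+s)-1=(1+s)^{-1}(1+s)-1=0 .
\]
On the other side, $u\circ\varepsilon(s)=0$. Both images vanish and trivially have zero constant term, so the two endomorphisms agree. The other composite is identical by symmetry of the formulas.

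\textbf{Main obstacle.} The delicate point is justifying that determination by the image of $s$ really holds in this bornological setting. One must know that a monoid morphism $\phi:R[[s]]\to R[[s]]$ is determined by $\phi(s)$ when $\phi(s)\in sR[[s]]$. This uses the description of $\Hom_R(\prod R\,s^n,\prod R\,s^n)$ as row-finite matrices from Corollary \ref{cor:duality_prod_coprod}: a morphism is determined by its values on the $s^n$, and $\phi(s^n)=\phi(s)^n$. We also need that the composites through $R[[s]]\wot R[[s]]\cong\prod_{(n,k)}R\,s^n\otimes s^k$ are computed coefficientwise, which follows from metrizability (Proposition \ref{pr:lim-wot-compatibility_for_metrizables}). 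If one prefers to avoid the monoid-morphism argument, there is an alternative: check the identity directly on each basis element $s^n$. Then $m(\alpha\wot\id)\Delta(s^n)=\big(m(\alpha\wot\id)\Delta(s)\big)^n=0$ for $n\ge1$, and the value is $1$ for $n=0$. This agrees with $u\varepsilon(s^n)$, and determination on the basis again follows from the hom-set description in Corollary \ref{cor:duality_prod_coprod}.
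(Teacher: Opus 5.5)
Your proposal is correct and matches the paper's proof. Both arguments reduce the antipode identity to evaluating the three composites on the generator $s$ (the paper uses $1+s$, which is equivalent since all the maps are unital), relying on the fact that these composites are monoid endomorphisms of $R[[s]]$ and hence determined by the image of $s$. Your justification of that reduction via the hom-set description of Corollary~\ref{cor:duality_prod_coprod} fills in what the paper leaves implicit; note that the zero-constant-term proviso is needed only for the existence of such an endomorphism, not for its uniqueness.
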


\begin{proof}
We have to check that 
\[
m\circ(\alpha \, \wot \, \id_{R[[s]]})\circ \Delta
=
m\circ( \id_{R[[s]]}\, \wot \, \alpha )\circ \Delta
=
u\circ\varepsilon
\]
It is enough to check what the morphisms do on $s\in R[[s]]$, or equivalently on $1+s$. We get
\begin{align*}
\big( m\circ(\alpha \, \wot \, \id_{R[[s]]})\circ \Delta \big) (1+s)
&=&
m\big( (1+s)^{-1}\otimes (1+s) \big)
&=&
1
\\
\big( m\circ( \id_{R[[s]]} \, \wot \, \alpha )\circ \Delta \big) (1+s)
&=&
m\big( (1+s)\otimes (1+s)^{-1} \big)
&=&
1
\\
(u\circ\varepsilon)(1+s)
&=&
u(1)
&=&
1
\end{align*}
\end{proof}

\begin{prop}
Suppose that $R$ is an non-archimedean Banach ring. Then, the mapping 
\[
\alpha: s\longmapsto \sum_{n=1}^\infty (-1)^n s^n
\]
induces well-defined endomorphisms of $\ps{R}{s}{\rho}$ for all $\rho\in(0,1)$ and of $\sps{R}{s}{1}$. Moreover, the tuple of morphisms $(m.u,\Delta,\varepsilon,\alpha)$ induces a Hopf algebra structure on $\ps{R}{s}{\rho}$ for all $\rho\in(0,1)$ and on $\sps{R}{s}{1}$.
\end{prop}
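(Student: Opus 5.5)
Our approach is to use Lemma \ref{lem:points_of_the_closed_disk} to reduce well-definedness of $\alpha$ to a norm estimate on the single element $\alpha(s)$, then pass to the limit, and finally deduce the Hopf identities from Lemma \ref{lem:formal_antipode} by injectivity into $R[[s]]$.

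\textbf{Step 1: $\alpha$ on $\ps{R}{s}{\rho}$.} Fix $\rho\in(0,1)$ and work in $\Ban^\na_R$. The first task is to show that $a:=\sum_{n\geq1}(-1)^n s^n$ is an element of $\ps{R}{s}{\rho}$, taking the coproduct in the non-archimedean sense. This holds because $\abs{(-1)^n}\rho^n\leq\rho^n\to0$. Its norm is $\sup_{n\geq1}\abs{(-1)^n}\rho^n\leq\rho$. By the last assertion of Lemma \ref{lem:points_of_the_closed_disk}, in its non-archimedean version with $A=\ps{R}{s}{\rho}$, there is then a unique algebra morphism $\alpha_\rho:\ps{R}{s}{\rho}\to\ps{R}{s}{\rho}$ with $\alpha_\rho(s)=a$.

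The point needing care is that $\alpha_\rho$ agrees with the formal $\alpha$. Both are algebra morphisms determined by the image of $s$. The inclusion $\ps{R}{s}{\rho}\to R[[s]]$ is an injective algebra map, and $\alpha_\rho$ is bounded, hence continuous for the $s$-adic coefficientwise topology on each Banach piece. Therefore the two maps agree on polynomials, and hence on all of $\ps{R}{s}{\rho}$.

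\textbf{Step 2: $\alpha$ on $\sps{R}{s}{1}$.} For $\rho_1<\rho_2<1$, the maps $\alpha_{\rho_i}$ commute with the identity-on-elements transition maps, since both are restrictions of the formal $\alpha$. Functoriality of $\varprojlim_{\rho<1}$ then gives $\alpha$ on $\sps{R}{s}{1}$.

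\textbf{Step 3: the Hopf identity.} For $\ps{R}{s}{\rho}$ with $\rho<1$, the bialgebra structure comes from Lemma \ref{lem:radius_for_def_of_bi-algebra_structure}: $\Delta$ is well defined since $\rho\leq1$, and $m$ and $u$ were shown above. For $\sps{R}{s}{1}$, it comes from Theorem \ref{th:duality_of_the_bialgebras}(ii). We must check
\[
m\circ(\alpha\wot\id)\circ\Delta=u\circ\varepsilon=m\circ(\id\wot\alpha)\circ\Delta .
\]
Every map in this identity is an algebra morphism: $\Delta$ and $\varepsilon$ by the bialgebra axioms, $m$ because the algebra is commutative, and $\alpha$ by construction. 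So each side is an algebra endomorphism of $\ps{R}{s}{\rho}$. The same argument as in Step 1, namely uniqueness in Lemma \ref{lem:points_of_the_closed_disk} or the injectivity into $R[[s]]$, shows each side is determined by the image of $s$. That image is computed exactly as in Lemma \ref{lem:formal_antipode}: both sides send $1+s$ to $1$.

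\textbf{Main obstacle.} The main difficulty is justifying this reduction to $s$ at the level of the completed tensor product. One needs the composite $\ps{R}{s}{\rho}\to\ps{R}{s}{\rho}\wot\ps{R}{s}{\rho}\to\ps{R}{s}{\rho}$ to be an algebra map. Then the uniqueness part of Lemma \ref{lem:points_of_the_closed_disk}, which identifies algebra maps with the sequence $(1,a,a^2,\dots)$, pins it down by its value on $s$.

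For $\sps{R}{s}{1}$, the codomain tensor product is identified with $\varprojlim_{\rho<1}\ps{R}{s}{\rho}\wot\ps{R}{s}{\rho}$, as in the proof of Theorem \ref{th:duality_of_the_bialgebras}. The identity therefore follows levelwise from the $\rho<1$ case by functoriality of the limit.
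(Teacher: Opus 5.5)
Your proposal is correct and follows essentially the same route as the paper: the bound $\norm{\alpha(s)}\leq\rho$ combined with Lemma \ref{lem:points_of_the_closed_disk} handles $\ps{R}{s}{\rho}$, functoriality of $\varprojlim_{\rho<1}$ handles $\sps{R}{s}{1}$, and Lemma \ref{lem:formal_antipode} supplies the antipode identity. Beyond the paper, you spell out two points it leaves implicit: both sides of the antipode identity are algebra morphisms and so are determined by the image of $s$ (via the uniqueness in Lemma \ref{lem:points_of_the_closed_disk}), and the identity on $\sps{R}{s}{1}$ follows levelwise through the identification of $\sps{R}{s}{1}\wot\sps{R}{s}{1}$ with $\varprojlim_{\rho<1}\ps{R}{s}{\rho}\wot\ps{R}{s}{\rho}$.
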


\begin{proof}
By Lemma \ref{lem:points_of_the_closed_disk} it is enough to check that 
\[
f(s):=\sum_{n=1}^\infty (-1)^n s^n
\]
has norm less or equal than $\rho$. Since $R$ is non-archimedean and $0<\rho<1$, we have
\[
\norm{f}_{\ps{R}{s}{\rho}}
\leq
\sup_{n=1,2,3,\dots}\abs{(-1)^n}\rho^n
\leq
\rho.
\]
By functoriality of limits, $\alpha$ induces a well-defined endomorphism of $\sps{R}{s}{1}$. Lemma \ref{lem:formal_antipode} together with Lemma \ref{lem:radius_for_def_of_bi-algebra_structure} ensure that $\alpha$ defines the antipode of an Hopf algebra also on $\ps{R}{s}{\rho}$ and $\sps{R}{s}{1}$.
\end{proof}

\begin{remark}
The fact that there is a Hopf algebra structure for $R$ non-archimedean is consistent with the fact that in a non-archimedean Banach ring the open unit ball with centre in $1$ is closed under multiplication and inversion. This is in stark contrast with what happens in the archimedean case.
\end{remark}

Now that we know that for $R$ non-archimedean $\sps{R}{s}{1}$ is a Hopf algebra, we also know that its dual $\dbs{R}{x}{1}$ is a Hopf algebra with antipode given by $\alpha^\vee$. One could ask how $\alpha^\vee$ acts on elements of $\dbs{R}{x}{1}$. Let $f$ be one of them. The series $\alpha^\vee f$ is uniquely determined by its pairing with the elements $(1+s)^n$ with $n$ varying in $\N$: 
\[\begin{split}
\alpha^\vee f(n)
&=
\langle \alpha^\vee f, (1+s)^n \rangle
\\
&=
\langle  f, \alpha\big((1+s)^n\big) \rangle
\\
&=
\langle f, (1+s)^{-n} \rangle
\\
&=
\sum_{k=0}^\infty \binom{-n}{k} \fdiff[k]{f}(0)
\end{split}\]
Thus, we could write $\alpha^\vee f(x)=f(-x)$. Like polynomials, the elements of $\dbs{R}{x}{1}$ define functions with values in $R$ and the domain may depends on the coefficient Banach ring $R$. Suppose we have a non-archimedean Banach ring $A$ over $R$. Consider the set of $R$-algebra homomorphism from $\dbs{R}{x}{1}$ to $A$
\[ G(A):=\Alg_R(\dbs{R}{x}{1},A). \]
This is has an abelian group structure induced by the Hopf algebra structure of $\dbs{R}{x}{1}$. Denote by $+$ the group operation of $G(A)$. Every element $\varphi\in G(A)$ is also an element of the bornologial $A$-module 
\[
\ihom_R(\dbs{R}{x}{1}, A)
\]
The latter is identified with $\sps{A}{s}{1}$ by sending each morphism $\xi:\dbs{R}{x}{1}\to A$ to the series
\[
\sum_{n=0}^\infty \xi\binom{x}{n}\,s^n.
\]
If $\varphi\in G(A)$, denote by $(1+s)^\varphi$ the corresponding series in $\sps{A}{s}{1}$. Then the mapping $\varphi\mapsto (1+s)^\varphi$ identifies $G(A)$ with the subset of series $F(s)\in\sps{A}{s}{1}$ such that
\begin{gather*}
    F(0)=1, \\
    \Delta(F(s))=F(s)\otimes F(s).
\end{gather*}
These are called \emph{group-like elements} of $\sps{A}{s}{1}$. In general we can regard elements of $\dbs{R}{x}{1}$ as functions on $G(R)$ and the group-like elements of $\sps{R}{s}{1}$ as Dirac measures. For example, when $R=\Q_p$, the set of group-like elements of $\sps{\Q_p}{s}{1}$ includes the series of the form
\[
(1+s)^a=\sum_{n=0}^\infty \binom{a}{n} s^n
\]
for every $a\in\Z_p$. Thus, the elements of $\dbs{\Q_p}{x}{1}$ define functions on $\Z_p$ with values on $\Q_p$. 

%% file: sections/concl.tex

\begin{definition}
Let $p$ be a prime number. On the field of $p$-adic numbers $\Q_p$ let $\abs{-}_p$ be the unique norm satisfying
\[
\abs{x}_p=p^{-v},
\]
where $x\in\Q_p^\times$ and $v$ is the maximal integer such that $x\in p^v\Z_p$. Denote by $\C_p$ the completed algebraic closure of $\Q_p$ with the unique norm that extends $\abs{-}_p$.
Denote by $\Z_\na$ the non-archimedean Banach ring consisting of the integers with the trivial norm
\[
\abs{n}_0=\begin{cases}
    1 & \text{if }n\neq 0, \\
    0 & \text{if }n=0.
\end{cases}
\]
Denote by $\Q_\na$ the non-archimedean Banach ring consisting of the field of rational numbers with the norm
\[
\abs{x}_{\Q_\na}=\sup_{\substack{p=0 \text{ or,} \\ p\text{ prime}}} \abs{x}_p.
\]
\end{definition}
Note that the norms of the Banach rings $\Z_\na$ and $\Q_\na$ satisfy
\[
\abs{x}\geq 1
\]
for every $x\neq0$. In particular, the topology on $\Z_\na$ and $\Q_\na$ induced by their respective norm is discrete. The inclusion of the integers in $\Z_p$ induces, for every prime $p$, a morphism
\[
\Z_\na \to \Z_p
\]
in $\Ban_{\Z_\na}$. Analogously, we have canonical inclusion-morphisms
\[
\Q_\na \to \Q_p
\]
for every prime $p$. 

\begin{definition}
Given a prime $p$ and a closed sub-ring $K$ of $\C_p$, denote by 
$\sC(\Z_p,K)$ the Banach $K$-module of continuous functions $f:\Z_p \to K$ with the max-norm 
\[
\norm{f}_{\sC(\Z_p,K)}=\max_{x\in\Z_p} \abs{f(x)}_p.
\]
For $r\in (0,\infty)$ and $x_0\in\Z_p$, a continuous function $f:\Z_p \to K$ is said analytic in the closed disk $B(x_0,r)$ of radius $r$ and centre $x_0$ if there is a power series $F_{x_0,r}(T)=\sum_{n=0}^\infty a_n T^n$ in 
$\ps{K}{T}{r}$ such that
\[
f(x)=\sum_{n=0}^\infty a_n (x-x_0)
\]
for every $x\in B(x_0,r)$. Denote by $\sC^{\la,r}(\Z_p,K)$ the Banach $K$-module consisting of continuous functions $f:\Z_p \to K$ that are analytic on $B(x_0,r)$ for each $x_0\in\Z_p$. The norm is 
\[
\norm{f}_{\sC^{\la,r}(\Z_p,K)}
=
\sup_{x_0\in\Z_p} \norm{F_{x_0,r}(T)}_{\ps{K}{T}{r}}.
\]
Let $\sC^\la(\Z_p,K)$ be the bornological $K$-module obtained as the union of the Banach spaces $\sC^{\la,r}(\Z_p,K)$ as $r\to 0$. The elements of $\sC^\la(\Z_p,K)$ are called \emph{locally analytic functions} \cite[14]{colmez}. Let $\sD^\la(\Z_p, K)$ be the dual of $\sC^\la(\Z_p,K)$, whose elements are called \emph{locally analytic distributions}
\end{definition}

The binomial polynomials 
\[
\binom{x}{n}=\frac{1}{n!}x(x-1)\cdots(x-n+1)
\]
induce continuous functions $\Z_p\to \Q_p$. They actually have image in $\Z_p$ because $\N$ is dense in $\Z_p$ and $\abs{\binom{x}{n}}_p\leq1$ for $x\in\N$. By a result \cite[Théorème I.2.3]{colmez} due to Mahler, the mapping
\begin{equation}\label{eq:bin_expansion_of_Z_p-functions}
\sC(\Z_p,K) \to \coprodc_{n\in\N} K\,\binom{x}{n},
\quad
f \mapsto \sum_{n=0}^\infty \fdiff[n]{f}(0)\binom{x}{n}
\end{equation}
is an isometric isomorphism. The isomorphism \eqref{eq:bin_expansion_of_Z_p-functions} restricts to an isomorphism of bornological $K$-modules
\[
\sC^\la(\Z_p,K)\cong\dbs{K}{x}{1}
\]
(this is a result of Amice; look at \cite[162-163]{inter} or \cite[Corollaire I.4.8]{colmez} for reference). Given a locally analytic distribution $\mu$, the pairing with a locally analytic function $f$ is denoted by
\begin{equation*}
\int_{\Z_p} f(x)\,\mu(x).
\end{equation*}
The Amice transform of $\mu$ is the series
\begin{equation*}
    \sA_\mu(s):=\sum_{n=0}^\infty \mu_n s^n,
    \qquad \text{where}\quad\mu_n:=\int_{\Z_p}\binom{x}{n}\,\mu(x).
\end{equation*}
As summarized in \cite{duals}, the Amice transform defines an isomorphism 
\begin{equation*}
\sD^\la(\Z_p,K)\cong\sps{K}{s}{1}   
\end{equation*}
From this point of view, the construction of $\dbs{R}{x}{1}$ and 
$\sps{R}{s}{1}$ with their duality extends the Amice transform to a general base Banach ring $R$. It also shows that the dual of $\sps{K}{s}{1}$ is the module $\sC^\la(\Z_p,K)$ and that all the previous facts hold if we replace $K$ with any closed sub-ring of $\C_p$. One can also use as coefficients a base ring below $\Q_p$, e.g. $R=\Z_\na$ or $R=\Q_\na$. If one do so, each element $\sps{R}{s}{1}$ induce a distribution on $\Z_p$ for all primes $p$ in a uniform way. 
For example, the series
\begin{equation*}
    \log(1+s):=\sum_{n=1}^\infty \frac{(-1)^{n+1}}{n} s^n
\end{equation*}
belongs to $\sps{\Q_\na}{s}{1}$ because 
\[
\abs{\frac{(-1)^{n+1}}{n}}\leq n
\]
for all $n\in\Np$ and $n\rho^n$ is a summable sequence for every $\rho<1$. Then, for all primes $p$, the morphism $\Q_\na \to \Q_p$ induce a morphism
\[
\sps{\Q_\na}{s}{1} \to \sps{\Q_p}{s}{1}\cong\sD^\la(\Z_p,\Q_p)
\]
which sends $\frac{1}{s}\log(1+s)$ to the Kubota-Leopold distribution $\mu_{\mathrm{KL},p}$. This distribution plays a role in the $p$-adic interpolation theory of $L$-functions, especially because of the following property:
\begin{equation*}
\int_{\Z_p} x^n\,\mu_{\mathrm{KL},p}(x) = B_n
\end{equation*}
for all $n\in\Np$, where $B_n$ is the $n$-th Bernoulli number. For more details, see \cite[Section 1.8.3]{font}.

\subsubsection{A remark on base-change}
For any non-archimedean Banach ring morphism $R \to A$,  we can compute the base-change of $\dbs{R}{x}{1}$ to $A$ only using the fact that $A\wot_R-$ commutes with colimits in $\Ind(\Ban_R)$ and $\Banc_R$:
\begin{align*}
A\wot_R\dbs{R}{x}{1}
&\cong A \;\wot_R \varinjlim_{\rho>1} 
\coprodc_{n\in\N} \left[R\binom{x}{n} \right]_{\rho^n}
\\
&\cong \varinjlim_{\rho>1} A \wot_R
\coprodc_{n\in\N} \left[R\binom{x}{n} \right]_{\rho^n}
\\
&\cong \varinjlim_{\rho>1} 
\coprodc_{n\in\N} \left[A\binom{x}{n} \right]_{\rho^n} 
\\
&\cong \dbs{A}{x}{1}.
\end{align*}

The identification 
\begin{equation*}
A \;\wot_R \; \sps{R}{s}{1} \cong \sps{A}{s}{1} 
\end{equation*}
is obtained similarly if we write $\sps{R}{s}{1}$ as a colimit of weighted $\ell^1_*$-modules with nuclear transition morphisms (this is possible because $\sps{R}{s}{1}$ is nuclear). With the above identification in mind, it's easy to see that the base-change $A\wot_R-$ also sends the pairing  associated with $\sps{R}{s}{1}$ and $\dbs{R}{x}{1}$ to the pairing associated with $\sps{A}{s}{1}$ and $\dbs{A}{x}{1}$. Thus, when $R=\Z_{\na}$, we have a couple of Hopf algebras $\big(\sps{\Z_\na}{s}{1},\dbs{\Z_\na}{x}{1}\big)$ in duality, whose base-change to any complete sub-ring $K$ of $\C_p$ is the pair $\big(\sD^\la(\Z_p,K),\sC^\la(\Z_p,K)\big)$.

%% file: ref.bib
@article{dag,
title = {Dagger geometry as Banach algebraic geometry},
journal = {Journal of Number Theory},
volume = {162},
pages = {391-462},
year = {2016},
issn = {0022-314X},
doi = {https://doi.org/10.1016/j.jnt.2015.10.023},
author = {Federico Bambozzi and Oren Ben-Bassat}
}

@article{fremod,
  title={Fr{\'e}chet modules and descent},
  author={Oren Ben-Bassat and Kobi Kremnizer},
  journal={Theory and Applications of Categories},
  year={2023},
  url={https://api.semanticscholar.org/CorpusID:211506624}
}

@article{nucle,
title = {Nuclearity in the category of complete semilattices},
journal = {Journal of Pure and Applied Algebra},
volume = {57},
number = {1},
pages = {67-78},
year = {1989},
issn = {0022-4049},
doi = {https://doi.org/10.1016/0022-4049(89)90028-5},
author = {D.A. Higgs and K.A. Rowe}
}

@article{sheafy,
author = {Bambozzi, Federico and Kremnizer, Kobi},
title = {On the sheafyness property of spectra of Banach rings},
year = {2024},
journal = {Journal of the London Mathematical Society},
volume = {109},
number = {1},
pages = {e12855},
doi = {https://doi.org/10.1112/jlms.12855}
}

@inproceedings{sga,
author="Grothendieck, A.
and Verdier, J. L.",
title="Prefaisceaux",
booktitle="Th{\'e}orie des Topos et Cohomologie Etale des Sch{\'e}mas",
year="1972",
publisher="Springer Berlin Heidelberg",
address="Berlin, Heidelberg",
pages="1--217",
isbn="978-3-540-37549-4"
}

@unpublished{condborn,
    author = {Federico Bambozzi and Kobi Kremnizer},
    title = {Relations between bornological and condensed structures -
algebraic theory},
    note = {in preparation.}
}

@article{quab,
    author    = {Schneiders, Jean-Pierre},
    title     = {Quasi-abelian categories and sheaves},
    journal   = {M{\'e}moires de la Soci{\'e}t{\'e} Math{\'e}matique de France},
    series    = {Nouvelle s{\'e}rie},
    number    = {76},
    year      = {1999},
    pages     = {1--144},
    publisher = {Soci{\'e}t{\'e} Math{\'e}matique de France},
    doi       = {10.24033/msmf.389},
}

@Inbook{koethe,
author="K{\"o}the, Gottfried",
title="Some Special Classes of Locally Convex Spaces",
bookTitle="Topological Vector Spaces I",
year="1983",
publisher="Springer Berlin Heidelberg",
address="Berlin, Heidelberg",
pages="367--436",
isbn="978-3-642-64988-2",
doi="10.1007/978-3-642-64988-2_6",
url="https://doi.org/10.1007/978-3-642-64988-2_6"
}

@article{colmez,
  author  = {Colmez, Pierre},
  title   = {Fonctions d'une variable {$p$}-adique},
  journal = {Ast{\'e}risque},
  volume  = {330},
  year    = {2010},
  pages   = {13--59},
  note    = {Repr{\'e}sentations {$p$}-adiques de groupes {$p$}-adiques II : Repr{\'e}sentations de {$\mathbf{GL}_2(\mathbf{Q}_p)$} et {$(\varphi, \Gamma)$}-modules},
  url     = {https://www.numdam.org/item/AST_2010__330__13_0/}
}

@inproceedings{duals,
  author    = {Amice, Yvette},
  title     = {Duals},
  booktitle = {Proceedings of the Conference on {$p$}-adic Analysis (Nijmegen, 1978)},
  publisher = {Katholieke Univ., Nijmegen},
  year      = {1978},
  pages     = {1--15},
  note      = {MR 522117}
}

@article{inter,
  author  = {Amice, Yvette},
  title   = {Interpolation {$p$}-adique},
  journal = {Bulletin de la Soci{\'e}t{\'e} Math{\'e}matique de France},
  volume  = {92},
  year    = {1964},
  pages   = {117--180},
  doi     = {10.24033/bsmf.1606}
}

@misc{font,
  author       = {Colmez, Pierre},
  title        = {Fontaine's rings and $p$-adic $L$-functions},
  howpublished = {Lecture notes, Tsinghua University},
  year         = {2004},
  url          = {https://webusers.imj-prg.fr/~pierre.colmez/tsinghua.pdf}
}

@misc{condmath,
      title={Lectures on Condensed Mathematics}, 
      author={Peter Scholze},
      year={2026},
      eprint={2605.03658},
      archivePrefix={arXiv},
      primaryClass={math.NT},
      url={https://arxiv.org/abs/2605.03658}, 
}

@misc{persp,
      title={A Perspective on the Foundations of Derived Analytic Geometry}, 
      author={Oren Ben-Bassat and Jack Kelly and Kobi Kremnizer},
      year={2024},
      eprint={2405.07936},
      archivePrefix={arXiv},
      primaryClass={math.AG},
      url={https://arxiv.org/abs/2405.07936}, 
}

@misc{kelly2025,
      title={Localising invariants in derived bornological geometry}, 
      author={Jack Kelly and Devarshi Mukherjee},
      year={2025},
      eprint={2505.15750},
      archivePrefix={arXiv},
      primaryClass={math.KT},
      url={https://arxiv.org/abs/2505.15750}, 
}

@misc{claused2026,
      title={Condensed Mathematics and Complex Geometry}, 
      author={Dustin Clausen and Peter Scholze},
      year={2026},
      eprint={2605.11731},
      archivePrefix={arXiv},
      primaryClass={math.CV},
      url={https://arxiv.org/abs/2605.11731}, 
}
